\documentclass{amsart}

\usepackage[margin=1.1in]{geometry}
\usepackage{amsmath,amssymb,amsfonts,amsthm,mathtools}
\usepackage{xcolor}
\usepackage{hyperref}

\definecolor{darkblue}{rgb}{0.0,0.0,0.45}
\hypersetup{
 colorlinks=true,
 linkcolor=darkblue,
 citecolor=darkblue,
 urlcolor=darkblue,
 pdftitle={Existence of classical solutions to the exterior Dirichlet problem for Hessian quotient equations},
 pdfauthor={Yuxuan Liao and Jiguang Bao},
 pdfsubject={Exterior Dirichlet problems for Hessian quotient equations},
 pdfkeywords={Hessian quotient equation, exterior Dirichlet problem, classical admissible solution, prescribed asymptotic Hessian, integral tail condition}
}

\numberwithin{equation}{section}

\theoremstyle{plain}
\newtheorem{theorem}{Theorem}[section]
\newtheorem{proposition}[theorem]{Proposition}
\newtheorem{lemma}[theorem]{Lemma}
\newtheorem{corollary}[theorem]{Corollary}
\newtheorem*{claim*}{Claim}

\theoremstyle{definition}
\newtheorem{definition}[theorem]{Definition}

\theoremstyle{remark}
\newtheorem{remark}[theorem]{Remark}

\title[Exterior Hessian quotient equations]{Existence of classical solutions to the exterior Dirichlet problem for Hessian quotient equations}

\author{Yuxuan Liao}
\address{School of Mathematical Sciences, Beijing Normal University, Beijing 100875, China}
\email{yxliao@mail.bnu.edu.cn}

\author{Jiguang Bao}
\address{School of Mathematical Sciences, Beijing Normal University, Beijing 100875, China}
\email{jgbao@bnu.edu.cn}
\thanks{Jiguang Bao was supported by the National Natural Science Foundation of China (Grant No.~12371200) and the Beijing Natural Science Foundation (Grant No.~1254049).}

\subjclass[2020]{35J60, 35J70, 35B40, 35B45}
\keywords{Hessian quotient equation, exterior Dirichlet problem, classical admissible solution, prescribed asymptotic Hessian, integral tail condition}
\date{}

\begin{document}

\begin{abstract}
        This paper studies the exterior Dirichlet problem for Hessian quotient equations with nonconstant right-hand sides. We prove the existence of classical admissible solutions with prescribed asymptotic Hessians and establish convergence of the Hessian at infinity. The main difficulties are obtaining second-order estimates on expanding annuli that are uniform in the outer radius and deriving Hessian convergence under an integral tail condition with no prescribed decay rate. These are resolved through a radius-independent boundary-to-interior estimate and a blow-down argument. We also allow nonradial perturbations of the source. Under stronger pointwise assumptions, we obtain higher-order asymptotic expansions and solutions for every sufficiently large prescribed asymptotic constant.
\end{abstract}

\maketitle

\section{Introduction and main results}\label{sec:introduction}

We study the exterior Dirichlet problem for the Hessian quotient equation
\begin{equation}\label{eq:exterior-problem}
        \begin{cases}
                S_{k,l}(D^2u):=\dfrac{S_k(D^2u)}{S_l(D^2u)}=g(x)
                       & \text{in }E:=\mathbb R^n\setminus\overline\Omega, \\[4pt]
                u=\phi & \text{on }\partial\Omega,                         \\
                \lambda(D^2u)\in\Gamma_k & \text{in }E,
        \end{cases}
\end{equation}
where $n\ge3$, $0\le l<k\le n$, $D^2u$ denotes the Hessian of $u$, and, for $A\in\operatorname{Sym}(n)$,
\begin{align*}
        S_m(A):=\sigma_m(\lambda(A)),\qquad m=0,1,\ldots,n, \\
        \Gamma_m:=\{\lambda\in\mathbb R^n:\sigma_j(\lambda)>0,\ 1\le j\le m\}.
\end{align*}
Here $\sigma_m$ is the $m$-th elementary symmetric polynomial of the eigenvalues
$\lambda(A)$, with $\sigma_0=1$, and $\Gamma_m$ is the G\r{a}rding cone, an open
convex symmetric cone with vertex at the origin.

The study of fully nonlinear elliptic equations such as \eqref{eq:exterior-problem} originates from the search for specific K\"ahler metrics on complex manifolds, where the ratio of the Ricci curvature to the K\"ahler form is prescribed \cite{CNS85, Trudinger95}. As a natural generalization, the Hessian quotient equation \eqref{eq:exterior-problem} reduces to the Monge--Amp\`ere equation when $(k,l)=(n,0)$, to the $k$-Hessian equation when $l=0$, and to the Poisson equation when $(k,l)=(1,0)$.

The exterior Dirichlet problem was first introduced by Meyers and Serrin \cite{MeyersSerrin1960} for linear elliptic equations. In the fully nonlinear setting, Caffarelli and Li \cite{CaffarelliLi2003} established the solvability of the exterior Monge--Amp\`ere equation. In view of the J\"orgens--Calabi--Pogorelov theorem \cite{Jorgens1954,Calabi1958,Pogorelov1972}, they proved that for $n\ge3$, prescribed $A\in\operatorname{Sym}(n)$ with $\det A=1$ and $b\in\mathbb R^n$, and all sufficiently large constants $c$, the exterior problem admits a unique classical solution with
\begin{equation*}
        u(x)=\frac12x^TAx+b\cdot x+c+O(|x|^{2-n}).
\end{equation*}
Building on this foundation, Li and Lu \cite{LiLu2018} completed the existence/nonexistence characterization in terms of the prescribed asymptotic behavior. Further developments relaxed the boundary data requirements; Bao and Wang \cite{BaoWang2024}, for example, characterized solvability in terms of the boundary value being semiconvex with respect to the inner boundary. Related techniques were also adapted to construct entire solutions with prescribed asymptotics \cite{BaoXiongZhou2019}, while various extensions addressing two-dimensional domains, nonhomogeneous terms, and refined error estimates can be found in \cite{FerrerMartinezMilan1999,Delanoe1992,BaoLiZhang2015,BaoLiZhang2016,LiuBao2023} and the references therein.

For $k$-Hessian equations, viscosity solutions with prescribed asymptotics were first constructed by Dai and Bao \cite{DaiBao2011}. Bao, Li, and Li \cite{BaoLiLi2014} then treated general symmetric asymptotics, obtaining
\begin{equation*}
        u(x)=\frac12x^TAx+b\cdot x+c+O(|x|^{\theta(2-n)}),
\end{equation*}
where $\theta(n,k,A)\in[\frac{k-2}{n-2},1]$. This framework also led to entire solutions \cite{WangBao2022,LiDai2020}. More recently, Li and Xiao \cite{LiXiao2026} obtained classical smooth strictly $k$-convex exterior solutions through estimates independent of the truncation radius. Building on this annular scheme and the metric selected by the linearized operator, Bao and Jiang \cite{BaoJiang2026} obtained viscosity and smooth solutions for arbitrary $A$ satisfying $S_k(A)=1$ for which the operator is elliptic.

For Hessian quotient equations, constant-source solvability with symmetric and generalized symmetric asymptotics was developed by Dai \cite{Dai2011JMAA}, Li and Dai \cite{LiDai2012}, and Li and Li \cite{LiLi2018}; see also \cite{LiLiZhao2025}. For nonconstant sources, Jiang, Li, and Li \cite{JiangLiLi2022ExteriorQuotient} constructed viscosity solutions under the pointwise assumption $g(x)=1+O(|x|^{-\beta})$, $\beta>2$, and Dai, Bao, and Wang \cite{DaiBaoWang2025} established viscosity solvability when $g$ is a perturbation of a generalized symmetric function at infinity.

Against this background, we prove classical solvability for Hessian quotient equations for every prescribed $A\in\operatorname{Sym}(n)$ with $\lambda(A)\in\Gamma_k$ and $S_{k,l}(A)=1$ under an $A$-adapted integral tail condition, together with $Du-Ax\to b$ and $D^2u\to A$ without a prescribed power rate. Under stronger pointwise assumptions, we obtain higher-order asymptotics and solutions for every sufficiently large prescribed asymptotic constant.

Before stating the main results, we record the remaining definitions used below.
\begin{definition}
        A function $u\in C^2(\Omega)$ is called $k$-admissible (or strictly
        $k$-convex) if $\lambda(D^2u(x))\in\Gamma_k$ for every $x\in\Omega$.
        It is weakly $k$-admissible (or $k$-convex) if
        $\lambda(D^2u(x))\in\overline{\Gamma}_k$ for every $x\in\Omega$.
\end{definition}
Let $\nu$ denote the unit normal to $\partial\Omega$ pointing from
$\Omega$ into $E$.
\begin{definition}
        The domain $\Omega$ is strictly star-shaped with respect
        to the origin if $x\cdot\nu(x)>0$ on $\partial\Omega$, and it is strictly
        $(k-1)$-convex ($(k-1)$-convex) if the principal-curvature vector of $\partial\Omega$, computed with respect to $\nu$, belongs to $\Gamma_{k-1}$ ($\overline{\Gamma}_{k-1}$); when $k=1$ the latter condition is void.
\end{definition}

We next formulate the tail condition used in the main theorem. Fix $A\in\operatorname{Sym}(n)$ with $\lambda(A)\in\Gamma_k$ and $S_{k,l}(A)=1$. Ellipticity of $S_{k,l}$ at $A$ gives a
unique positive definite matrix $G_A$ such that
\begin{equation}\label{eq:intro-linearized-matrix}
        D S_{k,l}(A)[H]=(k-l)\operatorname{tr}(G_AH),
        \qquad H\in\operatorname{Sym}(n).
\end{equation}
The linearization selects the $A$-adapted radius
\begin{equation}\label{eq:intro-affine-coordinate}
        |x|_A:=\bigl(x^TG_A^{-1}x\bigr)^{1/2}.
\end{equation}
This radius is used for the radial part of the source. Since $G_A>0$,
$|x|_A\asymp|x|$, so estimates that involve only decay orders may be stated using
the Euclidean radius $|x|$.

\emph{The $A$-adapted integral tail condition.}
For a source $g(x)\to1$ as $|x|\to\infty$, we consider decompositions into an
$A$-adapted radial profile and a remainder: let $R_0>0$,
$g_0\in C^2([R_0,\infty))$ be positive, and
$\varepsilon\in C^2([R_0,\infty))$ be nonnegative such that
\begin{equation}\label{eq:g-tail-decomposition}
        \bigl|g(x)-g_0(|x|_A)\bigr|
        \le \varepsilon(|x|_A),
        \qquad |x|_A\ge R_0,
\end{equation}
with
\[
        g_0(r)\longrightarrow1,
        \qquad \varepsilon(r)\longrightarrow0
        \quad\text{as }r\to\infty.
\]
Such a decomposition is not unique. When $\varepsilon\equiv0$, we call the source
\emph{$A$-adapted radial}. Given a decomposition as above, set
\begin{equation}\label{eq:g-tail-envelope}
        m_g(r):=|g_0(r)-1|+\varepsilon(r),
        \qquad
        \widehat m_g(r):=m_g(r)+r^{-n}\int_{R_0}^r s^{n-1}m_g(s)\,ds.
\end{equation}
We say that $g$ satisfies the $A$-adapted integral tail condition $(\mathrm H_A)$ if
there exists such a decomposition for which
\begin{equation}\label{eq:g-tail-integrability}
        \int_{R_0}^{\infty}r\widehat m_g(r)^2\,dr<\infty,
        \qquad
        \int_{R_0}^{\infty}r\varepsilon(r)\,dr<\infty.
\end{equation}
\begin{remark}
        The two conditions in \eqref{eq:g-tail-integrability} control the radial tail
        and the angular oscillation, respectively. The first condition allows radial tails of size $O((r\log r)^{-1})$, which is weaker than $O(r^{-1-\delta})$ for any $\delta>0$.
        The role of the second condition is not visible from a bound on $|g-1|$ alone.
        Indeed, set
        \[
                a:=\left({\binom nl}/{\binom nk}\right)^{\frac1{k-l}},
                \qquad A:=aI,
        \]
        and, for large $r=|x|$, consider
        \[
                u_1(x):=\frac a2|x|^2
                +\frac{na}{(k-l)(n-1)}\frac r{\log r},
                \qquad
                u_2(x):=\frac a2|x|^2
                +\frac{a}{k-l}x_1\log\log r.
        \]
        Both satisfy $D^2u_i\to A$ and
        $|S_{k,l}(D^2u_i)-1|=O((r\log r)^{-1})$. The source associated with $u_1$
        is $A$-adapted radial, satisfies $(\mathrm H_A)$, and has $Du_1-Ax\to0$;
        the source associated with $u_2$ has the same pointwise size but fails the
        second condition, and $Du_2-Ax$ does not converge. Thus the size of $g-1$
        alone does not capture the angular stability relevant to the prescribed linear
        asymptotics; see Appendix~\ref{app:angular-instability} for details.
\end{remark}

\begin{theorem}\label{thm:main}
        Let $n\ge3$, $0\le l<k\le n$, and $q\ge4$ be integers, and let $0<\alpha<1$.
        Let $\Omega\subset\mathbb R^n$ be a bounded $C^{q,\alpha}$ domain which is
        strictly star-shaped with respect to the origin and strictly
        $(k-1)$-convex. Let
        \[
                \phi\in C^{q,\alpha}(\partial\Omega),
                \qquad g\in C^{q-2,\alpha}(\mathbb R^n),
        \]
        and assume that $\inf_{\mathbb R^n}g>0$ and $\|g\|_{C^2(\mathbb R^n)}<\infty$.

        Fix any $A\in\operatorname{Sym}(n)$ with $\lambda(A)\in\Gamma_k$ and $S_{k,l}(A)=1$, and assume that $g$ satisfies
        $(\mathrm H_A)$. Then, for every $b\in\mathbb R^n$, problem
        \eqref{eq:exterior-problem} admits a classical $k$-admissible solution $u$.
        Moreover,
        \[
                u\in C^{q,\alpha}_{\mathrm{loc}}(\overline E),
        \]
        and
        \begin{equation}\label{eq:exterior-problem-linear-asymptotic}
                u(x)-\frac12x^TAx-b\cdot x=o(|x|),
                \qquad
                Du(x)-Ax\longrightarrow b,
                \qquad
                D^2u(x)\longrightarrow A
                \quad\text{as }|x|\to\infty.
        \end{equation}
\end{theorem}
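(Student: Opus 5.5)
We will follow the annular approximation scheme: solve the problem on bounded annuli $E_R:=\{x\in E: |x|_A<R\}$ with carefully chosen outer data, obtain estimates uniform in $R$, and pass to the limit $R\to\infty$. First, after the linear change of variables $y=G_A^{-1/2}x$ (more precisely, choosing coordinates in which the linearized operator at $A$ becomes a multiple of the Laplacian), we may reduce the $A$-adapted radius to the Euclidean one. We then construct global sub- and supersolutions of the form
\[
\underline u,\ \overline u=\tfrac12x^TAx+b\cdot x+c+w_\pm(|x|_A),
\]
where $w_\pm$ solves a radial ODE obtained from linearizing $S_{k,l}$ at $A$. Its source is $\pm$ (a constant multiple of) $m_g$, corrected by a term $\pm\varepsilon$ to absorb the nonradial remainder. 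Explicitly, $w_\pm'(r)\approx \pm C r^{1-n}\int_{R_0}^r s^{n-1}m_g(s)\,ds$, so $w_\pm'=O(r\widehat m_g)$. The integrability $\int r\widehat m_g^2<\infty$ is what controls the quadratic error terms of $S_{k,l}$ beyond linearization. The condition $\int r\varepsilon<\infty$ makes $w_\pm'$ contribute only a bounded, convergent drift, so $w_\pm=o(r)$. Near $\partial\Omega$ we glue with a barrier built from strict star-shapedness: take $\underline u$ to be the maximum of the above with a local subsolution $\phi$-matching at $\partial\Omega$, using $x\cdot\nu>0$ and large $c$ as in Caffarelli–Li and Bao–Li–Li. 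On $E_R$ we solve $S_{k,l}(D^2u_R)=g$ with $u_R=\phi$ on $\partial\Omega$ and $u_R=\underline u$ on $|x|_A=R$. This Dirichlet problem is solvable classically by Caffarelli–Nirenberg–Spruck/Trudinger theory, since $\partial\Omega$ is strictly $(k-1)$-convex and the outer sphere is uniformly convex. The comparison principle then gives $\underline u\le u_R\le\overline u$.

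Next come the uniform estimates. $C^0$ bounds follow from the barriers, which leave an $o(|x|)$ gap between them. Gradient bounds come from the barriers at the boundary together with the standard interior/global gradient estimate. The key step is a second-order estimate on $E_R$ that is independent of $R$. The inner boundary estimate is the classical one, using $(k-1)$-convexity and the $C^{q,\alpha}$ data. For the outer sphere $|x|_A=R$, we will use that $u_R-\underline u$ vanishes there. After the rescaling $x=Ry$, the function $R^{-2}u_R(Ry)$ solves a quotient equation on a fixed annulus with data close to $\frac12y^TAy$ and source close to $1$ (with $\|D g\|$ scaled down). Boundary $C^2$ estimates on the unit sphere are then scale-free, so $|D^2u_R|\le C$ near $|x|_A=R$ with $C$ independent of $R$. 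A maximum principle argument for a concave function of $D^2u_R$ (using $\|g\|_{C^2}<\infty$ and the concavity of $S_{k,l}^{1/(k-l)}$) transfers the boundary bounds to the interior, giving the radius-independent boundary-to-interior estimate. Ellipticity is uniform because $g$ is bounded below and $D^2u_R$ is bounded. Evans–Krylov and Schauder estimates then give $C^{q,\alpha}_{\mathrm{loc}}$ bounds, and a diagonal subsequence converges to $u$ solving \eqref{eq:exterior-problem}, with $\underline u\le u\le\overline u$. This yields $u-\frac12x^TAx-b\cdot x=o(|x|)$ after also fixing $c$ so that both barriers carry the same linear part $b$.

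Finally we prove the Hessian and gradient convergence, which we expect to be the main obstacle since no decay rate is prescribed. We use blow-down: for $R_j\to\infty$ set $v_j(y)=R_j^{-2}u(R_jy)$ on $\{1/2<|y|<2\}$. Then $D^2v_j$ is uniformly bounded and Hölder by the estimates above, and $v_j\to\frac12y^TAy$ uniformly by the $o(|x|)$ bound. By compactness, $D^2v_j\to A$ uniformly, so $D^2u\to A$. For $Du-Ax\to b$ we apply the same idea at first order: $w:=u-\frac12x^TAx-b\cdot x$ satisfies a linear uniformly elliptic equation $a_{ij}D_{ij}w=g-1$ with $a_{ij}\to(k-l)G_A$. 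Interior gradient estimates on balls of radius $|x|/2$ give
\[
|Dw(x)|\le C\Bigl(\tfrac{\operatorname{osc} w}{|x|}+|x|\sup|g-1|\Bigr).
\]
We need to refine this using the sharp barrier bounds $|w-w_\pm|$, so that the oscillation of $w$ over $B_{|x|/2}(x)$ is $o(|x|)$ and $|x|\,m_g$ is not a priori small. For the source term, we would instead use $W^{2,p}$/integral estimates on dyadic annuli exploiting $\int r\widehat m_g^2<\infty$, which forces $r\widehat m_g$ to be small in an $L^2(dr/r)$-averaged sense. This step is delicate and may require handling the radial part through $w_\pm$ explicitly and reserving the integral condition on $\varepsilon$ for the angular part.
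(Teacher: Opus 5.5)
You have a genuine gap in the barriers, and it is also why your gradient step stays open. Write $\underline w:=\underline u-P_c$ and $\overline w:=\overline u-P_c$. Because both sources are built from the unsigned envelope $m_g=|g_0-1|+\varepsilon$, the subsolution inequality forces $\mathcal L\underline w\gtrsim m_g$ (plus the quadratic error), and the supersolution inequality forces $\mathcal L\overline w\lesssim -m_g$. For radial profiles this gives $(\rho^{n-1}(\underline w-\overline w)')'\gtrsim\rho^{n-1}m_g$. Hence $\underline w-\overline w\ge c\,\mathcal Hm_g(\rho)-C$, whatever $\mathcal L$-harmonic terms $a+b\rho^{2-n}$ you add. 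Under $(\mathrm H_A)$ this is $o(\rho)$ but unbounded: radial tails $|g_0-1|\asymp(r\log r)^{-1}$ are allowed and give $\mathcal Hm_g\asymp r/\log r$. So for large $R$ your subsolution lies above your supersolution on $\{|x|_A=R\}$, and $u_R\le\overline u$ fails. Restoring the order requires shifting one barrier by an amount comparable to $\mathcal Hm_g(R)\to\infty$, which destroys the $R$-independent bound on fixed compact sets that you need to pass to the limit. For example, if $g_0<1$, your outer datum $P_c+\underline w(R)$ exceeds the natural profile $P_c+\Phi_A$ by an unbounded amount, so the normalization of $u_R$ drifts with $R$. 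Your scheme therefore needs $\int^\infty r\,m_g\,dr<\infty$, which excludes the radial tails $(\mathrm H_A)$ is designed to allow.

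The missing idea is to invert the radial part exactly, keeping its sign. Since $\mathcal L(h\circ\rho)=h''+\frac{n-1}{\rho}h'$ for $\rho=|x|_A$, the paper puts the same signed potential $\Phi=[\mathcal H(f_0-1)]\circ\rho$ into both barriers. It uses envelopes only for two things: the angular remainder, via the potential $T$, which is finite and tends to $0$ because $\int r\varepsilon<\infty$; and the quadratic Taylor error, via $\Psi$ with $\mathcal L\Psi=\widehat m^2$, which is finite and tends to $0$ because $\int r\widehat m^2<\infty$. The two branches then differ by at most $2T+C_H|\Psi|$, their truncations coincide on $\{\rho=R\}$, and all comparison constants are independent of $R$.

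This bounded gap is also what closes your gradient step. Knowing only $w=o(|x|)$ and $D^2w\to0$ at an unknown rate does not determine $\lim Dw$; consider $\sqrt{|x|}\sin\sqrt{|x|}$. Your interior gradient estimate also fails to close, because $r|g-1|$ need not be small under $(\mathrm H_A)$. In the paper, $v:=u-P_c-\Phi$ is bounded by comparison. Next, $D\Phi\to0$, because Cauchy--Schwarz and $\int r\widehat m^2<\infty$ give $r\mathcal Am(r)\to0$. Then $D^2v\to0$ by the blow-down together with $|D^2\Phi|\lesssim\widehat m\to0$. Finally, $|Dv(y)|\le C/h+Ch\sup_{B(y,h)}|D^2v|$, with $h=h(|y|)\to\infty$ slowly enough that $h=o(|y|)$, yields $Du-Ax\to b$.

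Three further steps of your sketch are understated.
\begin{itemize}
\item The interior Hessian bound must be independent of domain size. The usual maximum principle with an added $K|x|^2$ loses a factor $R^2$; the paper instead uses the Ivochkina--Trudinger--Wang boundary-to-interior estimate.
\item The blow-down needs a uniform H\"older bound on the rescaled sources. Local Evans--Krylov on fixed compact sets does not give this; the paper derives $|f-1|\le C\rho^{-1/3}$ from the integral condition and $\|Df\|_\infty<\infty$.
\item $E_R$ is not $(k-1)$-convex at $\partial\Omega$, so solvability on $E_R$ has to come from Guan's subsolution theorem, not from Caffarelli--Nirenberg--Spruck. The full-boundary subsolution requires gluing the boundary barrier to a non-quadratic far-field branch with nonconstant source. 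The paper does this with an extra $-\Theta\rho^{2-n}$ term and a Taylor-absorbing potential, not by a direct appeal to Caffarelli--Li.
\end{itemize}
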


A source of size $|x|^{-s}$ produces an exterior Newton potential with one of three familiar scales: the local scale $|x|^{2-s}$ for $2<s<n$, the critical scale $|x|^{2-n}\log|x|$ at $s=n$, or the capacity scale $|x|^{2-n}$ for $s>n$. We record these three possibilities as
\begin{equation}\label{eq:potential-scale}
        \mathcal P_s(r):=
        \begin{cases}
                r^{2-s},       & 2<s<n, \\
                r^{2-n}\log r, & s=n,   \\
                r^{2-n},       & s>n.
        \end{cases}
\end{equation}

\begin{theorem}\label{thm:asymptotic}
        Assume the hypotheses of Theorem~\ref{thm:main}. Suppose moreover that $g_0$
        in $(\mathrm H_A)$ can be chosen so that, for some $R_1\ge R_0$,
        \begin{equation}\label{eq:g-weighted-tail-bounds}
                |(g_0-1)^{(j)}(r)|\le Cr^{-\gamma-j},
                \qquad
                \bigl|D^j\bigl(g(x)-g_0(|x|_A)\bigr)\bigr|
                \le C|x|^{-\beta-j},
                \qquad 0\le j\le N,
        \end{equation}
        for $r\ge R_1$ and $|x|_A\ge R_1$, where
        $1\le N\le q-2$, $\gamma>1$, and $\beta>2$.
        In the $A$-adapted radial case, the remainder $g(x)-g_0(|x|_A)=0$ and we use the convention $\beta=\infty$.
        Let $u$ be an exterior solution produced by the construction in the proof of
        Theorem~\ref{thm:main} with this choice of $g_0$. Then, for some constant
        $c_\infty\in\mathbb R$, the remainder
        \begin{equation}\label{eq:refined-decomposition}
                \tau(x):=
                u(x)-\frac12x^TAx-b\cdot x-c_\infty
        \end{equation}
        satisfies the higher-order decay estimates
        \begin{equation}\label{eq:refined-derivative-decay}
                \limsup_{|x|\to\infty}
                \frac{|x|^j|D^j(\tau-\Phi_A)(x)|}
                {\mathcal P_{s_*}(|x|)}<\infty,
                \qquad 0\le j\le N+1,
                \qquad s_*:=\min\{\beta,2\gamma\},
        \end{equation}
        where $\Phi_A(x)=\varphi(|x|_A)$ is the radial correction given by
        \begin{equation}\label{eq:intro-linear-particular}
                \varphi(r)
                :=\frac{1}{(n-2)(k-l)}
                \int_{R_0}^{r}s\left[1-\left(\frac{s}{r}\right)^{n-2}\right]
                \bigl(g_0(s)-1\bigr)\,ds.
        \end{equation}
        It satisfies
        \begin{equation}\label{eq:intro-linear-response}
                \operatorname{tr}(G_AD^2\Phi_A)
                =\frac{g_0(|x|_A)-1}{k-l}.
        \end{equation}
\end{theorem}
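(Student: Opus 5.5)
We first verify the identity \eqref{eq:intro-linear-response} and then derive \eqref{eq:refined-derivative-decay} from a linearized equation by weighted barrier and rescaling arguments.

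\emph{Step 1: the radial correction.} Make the linear change of variables $y=G_A^{-1/2}x$, so that $|x|_A=|y|$ and $\operatorname{tr}(G_AD_x^2\Phi)=\Delta_y\Phi$. The radial function $\varphi$ in \eqref{eq:intro-linear-particular} is the variation-of-parameters solution of
\[
        \varphi''+\frac{n-1}{r}\varphi'=\frac{g_0-1}{k-l},
        \qquad \varphi(R_0)=\varphi'(R_0)=0.
\]
Differentiating under the integral gives
\[
        \varphi'(r)=\frac{r^{1-n}}{k-l}\int_{R_0}^r s^{n-1}(g_0-1)\,ds,
\]
and \eqref{eq:intro-linear-response} follows. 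Using $|g_0-1|\le Cr^{-\gamma}$ together with its derivative bounds, we get $|\varphi^{(j)}(r)|\le C\,r^{-j}\mathcal P_\gamma(r)$ for $j\ge1$, and $\varphi(r)=c_\varphi+O(\mathcal P_\gamma(r))$ when $\gamma>2$, which covers the case $\gamma\le2$ as well. The constant $c_\varphi$ is absorbed into $c_\infty$.

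\emph{Step 2: the linearized equation for $w:=u-\tfrac12x^TAx-b\cdot x-\Phi_A$.} By Theorem~\ref{thm:main}, $D^2u\to A$ and $Du-Ax\to b$. Expanding $S_{k,l}(A+D^2\tau)=g$ to second order at $A$ gives
\[
        (k-l)\operatorname{tr}(G_AD^2\tau)+Q(D^2\tau)=g-1,
        \qquad |Q(M)|\le C|M|^2 .
\]
Subtracting \eqref{eq:intro-linear-response} yields
\[
        \operatorname{tr}(G_AD^2w)=f:=\frac{g-g_0(|x|_A)}{k-l}-\frac{Q(D^2\tau)}{k-l}.
\]
A more convenient form is to write $S_{k,l}(A+D^2\tau)-S_{k,l}(A+D^2\Phi_A)$ as $a_{ij}(x)D_{ij}w$, where $a_{ij}\to (k-l)G_A$ is uniformly elliptic. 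Then
\[
        a_{ij}D_{ij}w=\bigl(g-g_0(|x|_A)\bigr)+O(|D^2\Phi_A|^2),
\]
and the right-hand side is $O(|x|^{-\beta})+O(|x|^{-2\gamma})=O(|x|^{-s_*})$ by Step 1. This is why $s_*=\min\{\beta,2\gamma\}$ appears.

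\emph{Step 3: decay of $w$ (the main obstacle).} We need $w\to c_\infty$ with rate $\mathcal P_{s_*}$. The coefficients $a_{ij}$ converge to $(k-l)G_A$ only at an unspecified rate, so Newton-potential barriers for the frozen operator are not directly available. The plan is a bootstrap.
\begin{enumerate}
        \item By the convergence $D^2u\to A$ from Theorem~\ref{thm:main} and the blow-down argument used there, $|D^2w|=o(1)$.
        \item Rescale on dyadic annuli: set $w_R(z)=R^{-2}\bigl(w(Rz)-w(Rz_0)\bigr)$. Interior $C^{2,\alpha}$ estimates for the concave operator $S_{k,l}^{1/(k-l)}$ (Evans--Krylov) give H\"older continuity of $D^2w$, and hence that $a_{ij}-(k-l)G_A$ is bounded by $C\,|D^2\tau|$.
        \item Write
        \[
                \operatorname{tr}(G_AD^2w)=O(|x|^{-s_*})+O(|D^2\tau|\,|D^2w|).
        \]
        Starting from $|D^2\tau|=O(|x|^{-\epsilon})$, obtained first by a Caffarelli--Li type improvement via the concave operator and barriers $C|x|_A^{2-n+\delta}$ built from the $G_A$-Laplacian, each iteration improves the decay of $D^2w$ by an $|x|^{-\epsilon}$ factor.
        \item Compare $w$ with barriers $c\pm C\,\mathcal P_{s_*}(|x|_A)$, which are supersolutions and subsolutions of $\operatorname{tr}(G_AD^2\cdot)\lessgtr\mp C|x|^{-s_*}$. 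The existence of the limit $c_\infty$ follows from the Liouville/harmonic expansion for $\operatorname{tr}(G_AD^2)$ in exterior domains with $n\ge3$. The quadratic term $O(|D^2w|^2)$ is harmless once it decays faster than $|x|^{-s_*}$.
\end{enumerate}
I expect the delicate point to be closing this iteration at the critical case $s_*=n$ (the logarithm) and in the regime $s_*$ near $2$.

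\emph{Step 4: derivatives.} Apply the rescaling $w_R(z)=w(Rz)/\mathcal P_{s_*}(R)$ on the annulus $1<|z|<2$. The function $w_R$ solves a uniformly elliptic equation whose coefficients are $C^{N-1,\alpha}$-bounded uniformly in $R$; this uses the source bounds \eqref{eq:g-weighted-tail-bounds} for $j\le N$ and the fact that $u\in C^{q,\alpha}$. Schauder estimates then give $\|D^jw_R\|\le C$ for $j\le N+1$, which is exactly \eqref{eq:refined-derivative-decay}.
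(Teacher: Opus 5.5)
Your skeleton is the paper's: linearize at $A$, invert $\mathcal L=\operatorname{tr}(G_AD^2\,\cdot)$ by a Newton potential, then rescale on dyadic annuli for derivatives. Working directly with $S_{k,l}$ and $\Phi_A$ is also a legitimate shortcut, since it avoids the paper's separate comparison of the $F$-correction $[\mathcal H\widetilde a]\circ\rho$ with $\Phi_A$.

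However, the step you label the main obstacle is never carried out, and everything downstream depends on it. To invert $\mathcal L$ by a convergent Newton potential and identify the $\mathcal L$-harmonic remainder as a constant plus $O(|x|^{2-n})$, the right-hand side $\frac{g-g_0(|x|_A)}{k-l}-\frac{Q(D^2\tau)}{k-l}$ must decay with order strictly greater than $2$. So you need $|D^2\tau|=O(|x|^{-\lambda})$ for some $\lambda>1$ at the outset. Theorem~\ref{thm:main} only gives $D^2\tau\to0$, with no rate.

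Your proposed start, a ``Caffarelli--Li type improvement with barriers $C|x|_A^{2-n+\delta}$'', does not say which function is compared with these barriers. At that stage $w$ is only known to be $o(|x|)$: it is not known to be bounded or to have a limit. The subsequent $+\epsilon$ iteration then passes through source orders $\le2$. There the Newton potential diverges, and nothing you state rules out harmonic growth. The comparison with $c\pm C\mathcal P_{s_*}(|x|_A)$ in your item (4) likewise presupposes the limit it is supposed to produce.

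The paper starts the bootstrap from a hypothesis you never use: $u$ is produced by the construction. The ordered far-field barriers $P_c+\Phi\pm T(\rho)$ and $C_H\Psi(\rho)$, with $T,\Psi\to0$, give $|u-P_c-\Phi|\le C$ on the tail. Rescaled Evans--Krylov/Schauder estimates on balls of radius comparable to $\rho(x)$ then give $|D^2(u-P_c)|\lesssim\rho^{-\min\{\gamma,2\}}$ (any exponent below $2$ when $\gamma=2$). This exponent exceeds $1$ because $\gamma>1$. Hence the quadratic remainder has order $>2$ from the first step, and the Newton-potential lemma for bounded $v$ applies. The exponent then doubles, $\lambda\mapsto\min\{\gamma,n,\beta,2\lambda\}$, until it stabilizes.

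If you want to keep a route that uses only the conclusions of Theorem~\ref{thm:main}, you can fill the gap with a barrier argument on first derivatives:
\begin{itemize}
\item $Dw\to0$, because $Du-Ax\to b$ and $D\Phi_A\to0$.
\item Each $\partial_ew$ solves $S_{k,l}^{ij}(D^2u)D_{ij}\partial_ew=\partial_eg-S_{k,l}^{ij}(D^2u)D_{ij}\partial_e\Phi_A=O(\rho^{-\min\{\gamma,\beta,n\}-1}\log\rho)$, with coefficients tending to $(k-l)G_A$.
\item Since $\mathcal L\rho^{-p}=-p(n-2-p)\rho^{-p-2}$, comparison with $C\rho^{-p}$ for a sufficiently small $p>0$ on the exterior region gives $|Dw|\lesssim\rho^{-p}$.
\item Rescaled interior estimates then give $|D^2w|\lesssim\rho^{-1-p}$, an exponent above $1$.
\end{itemize}
Without one of these two inputs your iteration does not start.
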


\begin{corollary}\label{cor:polynomial-asymptotics}
        Under the hypotheses of Theorem~\ref{thm:asymptotic}, assume further that
        $\gamma>2$. Since $\Phi_A$ then has a finite limit, we normalize it by an
        additive constant so that
        \begin{equation}\label{eq:phiA-vanishing-normalization}
                \Phi_A(x)\longrightarrow0
                \qquad\text{as }|x|\to\infty.
        \end{equation}
        There exists $c_*\in\mathbb R$ such that, for every $c>c_*$,
        problem~\eqref{eq:exterior-problem} admits a unique classical
        $k$-admissible solution $u_c$ with
        \[
                \tau_c(x):=u_c(x)-\frac12x^TAx-b\cdot x-c
        \]
        satisfying, for $\mu:=\min\{\gamma,\beta\}>2$,
        \begin{equation}\label{eq:power-tail-consequences}
                \limsup_{|x|\to\infty}
                \frac{|x|^j|D^j\tau_c(x)|}{\mathcal P_\mu(|x|)}<\infty,
                \qquad 0\le j\le N+1.
        \end{equation}
\end{corollary}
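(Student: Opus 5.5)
The plan is to combine the construction behind Theorem~\ref{thm:main} with the expansion of Theorem~\ref{thm:asymptotic}, and to fix the constant by a one-parameter Perron family, as in Caffarelli--Li and Li--Li. First I check that $\Phi_A$ has a finite limit when $\gamma>2$. From \eqref{eq:intro-linear-particular} we have $\varphi(r)=\frac{1}{(n-2)(k-l)}\bigl[\int_{R_0}^r s(g_0-1)\,ds-r^{2-n}\int_{R_0}^r s^{n-1}(g_0-1)\,ds\bigr]$. The first integral converges because $|g_0-1|\le Cs^{-\gamma}$ with $\gamma>2$. After subtracting the limit, the remaining tail is $O(\mathcal P_\gamma(r))$, together with the corresponding derivative bounds. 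So with the normalization \eqref{eq:phiA-vanishing-normalization} we get $|x|^j|D^j\Phi_A|\lesssim\mathcal P_\gamma(|x|)$. Since $s_*=\min\{\beta,2\gamma\}\ge\mu$ and $\mathcal P_s$ is nonincreasing in $s$, Theorem~\ref{thm:asymptotic} then gives, for any solution produced by the construction, $\tau-c'=O(\mathcal P_\mu)$ at all orders $j\le N+1$, where $c'=c_\infty+\lim\Phi_A$. This yields \eqref{eq:power-tail-consequences} once the constant has been identified as $c$.

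\textbf{Existence for each large $c$.} I would build sub- and supersolutions with prescribed constant $c$. For the supersolution take $\overline u_c:=\frac12x^TAx+b\cdot x+c+\Phi_A+\psi$, where $\psi=-M\mathcal P_\mu(|x|_A)$ is a correction for the nonradial part and the quadratic error. This gives $S_{k,l}(D^2\overline u_c)\le g$ for $|x|$ large. For the subsolution, glue in a generalized symmetric radial subsolution $w_{c}(x)=\frac12x^TAx+b\cdot x+\int_{r_1}^{|x|_A}(\dots)$, of the type used by Li--Li and Jiang--Li--Li. It has the form $\frac12 x^TAx+b\cdot x+c-C_1(\text{param})\,|x|_A^{2-n}+\Phi_A-M\mathcal P_\mu$ near infinity, it satisfies $S_{k,l}\ge g$, and it lies below $\phi$ on $\partial\Omega$. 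The latter is where $c>c_*$ is needed: the parameter controlling the inner value runs continuously, and the resulting asymptotic constant $c(\text{param})$ covers $(c_*,\infty)$. I would then take $\underline u=\max\{w_c,\ \text{local barrier}\}$, using barriers at $\partial\Omega$ built from strict $(k-1)$-convexity. Perron's method, or the annular approximation scheme of Theorem~\ref{thm:main} with Dirichlet data $\frac12x^TAx+b\cdot x+c$ on the outer sphere, produces a solution $u_c$ squeezed between two functions that both equal $\frac12x^TAx+b\cdot x+c+O(\mathcal P_\mu)$. Hence $c'=c$. Regularity follows from the estimates in the proof of Theorem~\ref{thm:main}, and the derivative decay comes from the preceding paragraph, applied by rerunning the blow-down argument behind Theorem~\ref{thm:asymptotic} for this solution.

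\textbf{Uniqueness.} If $u_c,v_c$ are two such solutions, then $u_c-v_c\to0$ at infinity and vanishes on $\partial\Omega$. Both functions are $k$-admissible and solve the same equation, so the difference satisfies a linear elliptic equation obtained by integrating $DS_{k,l}$ along the segment between them. The maximum principle on $E\cap B_R$, with $R\to\infty$, gives $u_c\equiv v_c$.

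\textbf{Main obstacle.} The hard step is the subsolution family, which has to attain exactly the constant $c$ while absorbing a nonradial, merely $O(|x|^{-\beta})$ source perturbation. I would first solve the ODE for the radial profile with source $\inf g$ near $\Omega$ and $g_0$ outside. I would then add $-M\mathcal P_\mu(|x|_A)$ with $\mu>2$, whose linearized operator $\operatorname{tr}(G_AD^2\cdot)$ dominates $|g-g_0(|x|_A)|$ at infinity. I would check monotonicity and continuity of the constant in the parameter, so that the intermediate value theorem gives every $c>c_*$.
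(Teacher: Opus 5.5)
\textbf{There is a genuine gap in the existence step.} Your computation that $\Phi_A$ has a finite limit with $|x|^j|D^j\Phi_A|\lesssim\mathcal P_\gamma(|x|)$, the resulting exponent $\mu=\min\{\gamma,s_*\}$, and your uniqueness argument are all fine. The gap is that you never produce, for a prescribed $c$, a solution whose asymptotic constant is exactly $c$; that is the only new content beyond Theorem~\ref{thm:asymptotic}. The barriers you sketch fail as written:
\begin{enumerate}
\item \emph{Wrong sign in the supersolution correction.} For $2<\mu<n$ one has $\mathcal L(\rho^{2-\mu})=-(\mu-2)(n-\mu)\rho^{-\mu}$, so $\psi=-M\mathcal P_\mu(\rho)$ is $\mathcal L$-subharmonic; the same holds at $\mu=n$. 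For $\mu>n$ the profile $\rho^{2-n}$ is $\mathcal L$-harmonic and absorbs nothing in either barrier. A supersolution must dominate a possibly negative $g-g_0(\rho)$ of size $|x|^{-\beta}$, which needs a strictly $\mathcal L$-superharmonic correction.
\item \emph{No global upper comparison.} Your supersolution is only defined and checked on a tail. You give no upper comparison on the core that is ordered with $\phi$ on $\partial\Omega$ and matches it across an interface. In the paper this is supplied by Lemma~\ref{lem:upper-solution} together with the Hopf gluing in Lemma~\ref{lem:uniform-value-collar}.
\item \emph{The subsolution family does not exist as described.} A radial profile with source $\inf g$ gives $S_{k,l}\le g$, which is the wrong inequality. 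For general $A$ the eigenvalues of $D^2\bigl(\tfrac12x^TAx+h(|x|_A)\bigr)$ depend on the direction of $x$, so there is no radial ODE to solve. The generalized symmetric subsolutions of Li--Li and Jiang--Li--Li require $A$ positive definite, whereas the corollary allows every $A$ with $\lambda(A)\in\Gamma_k$, possibly indefinite.
\end{enumerate}
Consequently the continuous family and the claim that $c(\text{param})$ sweeps out $(c_*,\infty)$ are unsupported.

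\textbf{The missing idea is that no one-parameter family or intermediate-value argument is needed.} The barriers of Section~\ref{sec:construction} already pin the constant.
\begin{enumerate}
\item For every sufficiently large $t$, the branch $\underline w$ of Lemma~\ref{lem:gluing-lower-solutions} (before subtracting $\delta_*\chi_*$) satisfies $\underline w-(P_t+\Phi^-)\to0$. This holds because the gluing to the tail happens at radius $\kappa t$ through the decaying term $-\Theta\rho^{2-n}+Z$.
\item The upper comparison gives $u_t\le P_t+\Phi^+$.
\item The shift $\delta_*$ is only needed to create the crossing with $w_R$. The truncated problems do not depend on $\delta_*$, so by uniqueness $u_R\ge\underline v_{\delta_*}$ for all small $\delta_*$. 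Letting $R\to\infty$ and then $\delta_*\downarrow0$ gives $u_t\ge\underline w$.
\item Since $\Phi^\pm=\Phi\pm T(\rho)$ with $T\to0$, these bounds squeeze $u_t-P_t-\Phi\to0$.
\item Taking $t=c-L_\Phi$ with $L_\Phi=\lim\Phi$ gives $v_\infty=0$, $C_A=L_\Phi$ and $c_\infty=c$ in the proof of Theorem~\ref{thm:asymptotic}, with $c_*=t_0+L_\Phi$.
\end{enumerate}
Because $u_c$ is produced by the construction of Theorem~\ref{thm:main}, Theorem~\ref{thm:asymptotic} applies to it directly. You then do not need to re-establish regularity or redo the blow-down for a Perron-type solution.
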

\begin{remark}
	Notice that $s_*$ and $\mu$ concern different remainders: $s_*$ controls $\tau-\Phi_A$, after the radial correction has been removed, whereas $\mu$ controls $\tau_c$ itself. Accordingly, $\mu=\min\{\gamma,s_*\}=\min\{\gamma,\beta\}$.
\end{remark}

Compared with existing results for nonconstant-source Hessian quotient equations, the present theorems extend the solvability and asymptotic theory in several directions.
Indeed, taking $g_0\equiv1$ includes the pointwise class
$g(x)=1+O(|x|^{-\beta})$, $\beta>2$, of
\cite{JiangLiLi2022ExteriorQuotient}; in the common quadratic-asymptotic regime we also
remove the additional restriction on $A$ in \cite{DaiBaoWang2025} and obtain classical
solutions under the present smooth assumptions. The special cases $l=0$, $g\equiv1$
and $(k,l)=(n,0)$, $g_0\equiv1$ recover, respectively, the quadratic $k$-Hessian regime
of \cite{BaoLiLi2014}, with the capacity-scale remainder, and the corresponding
fast-decay Newton-potential scales in the Monge--Amp\`ere theory
\cite{BaoLiZhang2015,LiuBao2023}.

The proof has two main ingredients. First, boundary estimates together with a radius-independent boundary-to-interior estimate yield uniform $C^2$ bounds on the expanding annuli. Second, the integral tail condition gives only qualitative control at infinity, and a blow-down argument yields $D^2u\to A$.

The paper is organized as follows.
Section~\ref{sec:construction} introduces the concave operator $F$ and constructs the global comparison functions.
Section~\ref{sec:estimates} derives estimates on
expanding annuli that are independent of the outer radius. Section~\ref{sec:compactness} constructs the exterior solution, recovers its regularity at the fixed boundary, and then proves the quantitative expansion under the additional pointwise tail assumptions.
Appendix~\ref{app:angular-instability} illustrates the role of angular stability in the prescribed linear asymptotics. Appendix~\ref{app:regular-variation} records the decay of radial potentials for regularly varying tails, while Appendix~\ref{app:annular-solvability} records the bounded-annulus solvability result used in the exhaustion.

\section{The concave operator and construction of global sub- and supersolutions}
\label{sec:construction}
In this section we introduce the concave operator $F$ and the geometric facts needed near the obstacle. The comparison functions are then constructed in three stages: first a pair of far-field barriers, next a fixed-scale connection to the obstacle, and finally the assembly on expanding annuli.

Throughout the construction, we use the following notation. Unsubscripted profiles refer to the corresponding untruncated functions on the tail; a subscript $R$ denotes truncation at radius $R$, and a subscript $j$ refers to the exhaustion sequence $R_j\to\infty$. Superscripts $+$ and $-$ distinguish upper and lower comparison functions. The parameter $c$ appears explicitly in $P_c$; after it is fixed, its dependence in the comparison functions is usually suppressed. Exhaustion-dependent scalar normalizations are denoted by $c_j$, while $c_\infty$ is reserved for the final asymptotic constant.

\subsection{The concave operator and fixed-boundary geometry}
\label{subsec:fixed-boundary-geometry}

For the construction it is convenient to replace the quotient by its homogeneous concave root,
\[
        F(M):=\left(\frac{S_k(M)}{S_l(M)}\right)^{1/(k-l)},
        \qquad f:=g^{1/(k-l)}.
\]
Then \eqref{eq:exterior-problem} is equivalent to
\[
        F(D^2u)=f\quad\hbox{in }E,
        \qquad u=\phi\quad\hbox{on }\partial\Omega.
\]
The operator $F$ is homogeneous of degree one, elliptic, and concave on the set of symmetric matrices whose eigenvalues lie in $\Gamma_k$ \cite{CNS85}. Fix the matrix $A$ and the vector $b$ from
Theorem~\ref{thm:main}. By \eqref{eq:intro-linearized-matrix},
\[
        DF(A)[H]=\operatorname{tr}(G_AH).
\]
Throughout this paper, write
\[
        G:=G_A,
        \qquad \mathcal Lv:=\operatorname{tr}(GD^2v),
        \qquad \rho(x):=(x^TG^{-1}x)^{1/2},
\]
and set
\[
        P_c(x):=\frac12x^TAx+b\cdot x+c.
\]
The parameter $c$ will be chosen sufficiently large in the annular construction.

Strict star-shapedness allows us to write
\[
        \partial\Omega
        =\{\varrho_\Omega(\theta)\theta:\theta\in\mathbb S^{n-1}\},
        \qquad \varrho_\Omega>0.
\]
The associated Minkowski functional is
\[
        \beta_\Omega(r\theta):=\frac{r}{\varrho_\Omega(\theta)}.
\]
It is smooth on every fixed annular neighborhood of $\partial\Omega$,
$\beta_\Omega=1$ on $\partial\Omega$, and its level sets are the homothetic
hypersurfaces $s\partial\Omega$. In particular, $\nabla\beta_\Omega$ does not
vanish there. If $\xi$ and $\zeta$ are tangent to a level set of
$\beta_\Omega$, then
\[
        D^2\beta_\Omega(\xi,\zeta)
        =|\nabla\beta_\Omega|\,\mathrm{II}(\xi,\zeta).
\]
Thus strict $(k-1)$-convexity of $\partial\Omega$ gives uniform positivity of
the first $k-1$ elementary symmetric functions of the tangential Hessian on
every fixed homothetic ring.

Let $d_\Omega$ denote the exterior distance to $\partial\Omega$. On a fixed
tubular collar, $d_\Omega$ is smooth and its level sets are the outer parallel
hypersurfaces of $\partial\Omega$. After shrinking the collar, their
principal-curvature vectors remain in a fixed compact subset of
$\Gamma_{k-1}$. These properties will be used in the fixed-boundary construction below.

\subsection{The far-field sub- and supersolution pair}

For an $A$-adapted radial function, direct differentiation gives
\[
        \mathcal L(h\circ\rho)
        =h''(\rho)+\frac{n-1}{\rho}h'(\rho).
\]
An $A$-adapted radial source is inverted by first measuring its enclosed radial
mass and then integrating once more in the radial direction. We encode these
two operations by
\[
        \mathcal A h(r):=r^{-n}\int_{R_0}^r s^{n-1}h(s)\,ds,
        \qquad
        \mathcal Hh(r):=\int_{R_0}^r t\,\mathcal Ah(t)\,dt.
\]
By construction, $\mathcal Hh$ solves the radial linearized equation
\[
        (\mathcal Hh)''+\frac{n-1}{r}(\mathcal Hh)'=h
\]
with zero initial data at $R_0$; this explicit integral representation and its
normalized derivative $\mathcal Ah(r)=(\mathcal Hh)'(r)/r$ are used in
Lemma~\ref{lem:far-field-barriers} to cancel the linear error of the
perturbed polynomial $P_c$ and bound the remaining nonlinear terms.

We first consider an $A$-adapted radial source. Let $f_0(r)\to1$, and choose a
nonnegative envelope $m$ such that $|f_0-1|\le m$ and $m(r)\to0$. Define $\widehat m:=m+\mathcal A m$, and assume
\begin{equation}\label{eq:radial-square-integrability}
        \int_{R_0}^{\infty}r\widehat m(r)^2\,dr<\infty.
\end{equation}
Fix $0<\theta_0<\theta_1<1$ and a smooth cutoff $\chi$ which equals one on
$[0,\theta_0]$ and zero on $[\theta_1,\infty)$. For $R>R_0$, let
\[
        \chi_R(r):=\chi(r/R),
        \qquad f_{0,R}(r):=1+\chi_R(r)(f_0(r)-1),
\]
\[
        \Phi(x):=[\mathcal H(f_0-1)](\rho(x)),
        \qquad
        \Phi_R(x):=[\mathcal H(f_{0,R}-1)](\rho(x)),
\]
and
\[
        \Psi_R(r):=-\int_r^R t^{1-n}
        \int_{R_0}^t s^{n-1}\widehat m(s)^2\,ds\,dt,
        \qquad
        \Psi(r):=\lim_{R\to\infty}\Psi_R(r).
\]

\begin{lemma}\label{lem:far-field-barriers}
        After increasing $R_0$, there is a constant $C_H>0$, independent of $R$, such
        that
        \[
                \overline u_R^\infty:=P_c+\Phi_R,
                \qquad
                \underline u_R^\infty:=P_c+\Phi_R+C_H\Psi_R(\rho)
        \]
        are $k$-admissible on $\{R_0\le\rho\le R\}$ and satisfy
        \[
                F(D^2\underline u_R^\infty)\ge f_{0,R}(\rho),
                \qquad
                F(D^2\overline u_R^\infty)\le f_{0,R}(\rho).
        \]
        The two branches are ordered and agree on $\{\rho=R\}$, with
        \[
                0\ge \underline u_R^\infty-\overline u_R^\infty
                =C_H\Psi_R(\rho)\ge C_H\Psi(\rho).
        \]
        Moreover, $\Phi_R=\Phi$ on $\{\rho\le\theta_0R\}$, while
        $\Psi(r)\to0$ and $\Psi(r)/r^2\to0$ as $r\to\infty$.
\end{lemma}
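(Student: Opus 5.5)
The plan is a second-order Taylor expansion of $F$ at $A$, combined with explicit formulas for the Hessian of $A$-adapted radial functions. For $h\in C^2$ we have
\[
D^2(h\circ\rho)=h'(\rho)D^2\rho+h''(\rho)\,D\rho\otimes D\rho,\qquad |D\rho|\lesssim1,\quad |D^2\rho|\lesssim\rho^{-1}.
\]
Hence $|D^2(h\circ\rho)|\lesssim|h''|+|h'|/\rho$.

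\emph{Bound on $D^2\Phi_R$.} For $h=\mathcal H(f_{0,R}-1)$ we have $h'(r)=r\,\mathcal A(f_{0,R}-1)(r)$ and $h''=(f_{0,R}-1)-(n-1)\mathcal A(f_{0,R}-1)$. Since $0\le\chi_R\le1$, we get $|f_{0,R}-1|\le m$, and therefore $|D^2\Phi_R|\le C_0\widehat m(\rho)$ with $C_0$ independent of $R$. Here $\mathcal A m\to0$ because $m\to0$. Increasing $R_0$ makes $\widehat m$ as small as we like on $[R_0,\infty)$. The eigenvalues of $A+D^2\Phi_R$ then stay in a fixed compact subset of $\Gamma_k$, which gives admissibility.

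\emph{Supersolution.} By concavity of $F$ on $\Gamma_k$,
\[
F(A+M)\le F(A)+DF(A)[M]=1+\operatorname{tr}(GM).
\]
Moreover $\mathcal L\Phi_R=f_{0,R}(\rho)-1$ by the radial ODE for $\mathcal H$. Hence $F(D^2\overline u_R^\infty)\le f_{0,R}(\rho)$ with no error term. The identity $\Phi_R=\Phi$ on $\{\rho\le\theta_0R\}$ holds because $\chi_R=1$ there and $\mathcal H$ only integrates from $R_0$ up to $r$.

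\emph{Subsolution.} For $\Psi_R$ we have
\[
\Psi_R'(r)=r^{1-n}\int_{R_0}^r s^{n-1}\widehat m^2\,ds=r\,\mathcal A(\widehat m^2)(r)\ge0,
\]
so $\mathcal L\Psi_R(\rho)=\widehat m(\rho)^2$, and $|D^2\Psi_R(\rho)|\lesssim\widehat m^2+\mathcal A(\widehat m^2)$. Put $M=D^2\Phi_R$ and $N=C_HD^2\Psi_R(\rho)$. Smoothness of $F$ near $A$ gives
\[
F(A+M+N)\ge1+\operatorname{tr}(G(M+N))-C_1|M+N|^2
\ge f_{0,R}+C_H\widehat m^2-2C_1C_0^2\widehat m^2-2C_1|N|^2 .
\]
Choose $C_H=4C_1C_0^2+1$. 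It then remains to absorb $|N|^2\lesssim C_H^2(\widehat m^2+\mathcal A(\widehat m^2))^2$ into $\tfrac12C_H\widehat m^2$.

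I expect this last absorption to be the main obstacle, because $\mathcal A(\widehat m^2)$ is not pointwise dominated by $\widehat m^2$. The first thing I would try is the bound $\mathcal A(\widehat m^2)\le(\sup\widehat m)\,\mathcal A(\widehat m)$, together with the fact that $\widehat m$ already contains the averaged term $\mathcal A m$. The aim is an inequality of the form $\mathcal A(\widehat m)\lesssim\widehat m$, possibly after replacing the envelope $m$ by a slightly larger envelope that is monotone in the averaged sense; this replacement is harmless since condition \eqref{eq:radial-square-integrability} is stable under it. Once that inequality holds, $|N|^2\le C\,C_H^2\delta^2\widehat m^2$ with $\delta=\sup_{[R_0,\infty)}\widehat m$. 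Enlarging $R_0$ makes $\delta$ small enough, depending on $C_H$, to close the estimate. The admissibility of $\underline u_R^\infty$ follows from the same smallness.

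\emph{Ordering and properties of $\Psi$.} The integrand defining $\Psi_R$ is nonnegative, so $\Psi_R\le0$ and $\Psi_R(\rho)=0$ on $\{\rho=R\}$. Also $\Psi_R\ge\Psi$, since $\Psi$ integrates the same nonnegative quantity over the larger range $[r,\infty)$. By Fubini,
\[
|\Psi(r)|\le\int_r^\infty t^{1-n}\!\int_{R_0}^t s^{n-1}\widehat m^2\,ds\,dt\le \frac{r^{2-n}}{n-2}\int_{R_0}^r s^{n-1}\widehat m^2\,ds+\frac1{n-2}\int_r^\infty s\,\widehat m^2\,ds ,
\]
using $n\ge3$. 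The second term tends to zero by \eqref{eq:radial-square-integrability}. The first term tends to zero by dominated convergence, since $r^{2-n}s^{n-1}\le s$ for $s\le r$. Thus $\Psi(r)\to0$, and a fortiori $\Psi(r)/r^2\to0$.
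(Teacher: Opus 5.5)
Most of your argument is sound and parallels the paper. This includes the radial Hessian bound $|D^2\Phi_R|\le C_0\widehat m(\rho)$, the tangent-plane argument for the supersolution, $\mathcal L\Psi_R(\rho)=\widehat m(\rho)^2$, fixing $C_H$ before $R_0$, and the Fubini computation for $\Psi$. The gap is at the step you flagged: your route to absorbing $|N|^2$ fails, because $\mathcal A(\widehat m)\lesssim\widehat m$ is false in general, for every choice of $R_0$. Take $m(r)=r^{-p}$ with $p>n$. Then $M(r):=\int_{R_0}^r s^{n-1}m(s)\,ds\to M_\infty>0$, so $\widehat m(r)\sim M_\infty r^{-n}$, while
\[
\mathcal A(\mathcal A m)(r)=r^{-n}\int_{R_0}^r\frac{M(s)}{s}\,ds\sim M_\infty r^{-n}\log r .
\]
In this example your route only yields $\mathcal A(\widehat m^2)\lesssim(\sup\widehat m)\,\widehat m\log\rho$, which cannot be absorbed. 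The loss comes from the step $\mathcal A(\widehat m^2)\le(\sup\widehat m)\,\mathcal A(\widehat m)$. It discards the square on the $\mathcal A m$ part of $\widehat m$, and that square is exactly what makes the needed estimate true. Your fallback of enlarging the envelope is not specified. It would also change $\widehat m$, and hence $\Psi_R$ and $\underline u_R^\infty$, so it would not prove the lemma as stated.

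The missing ingredient is a Hardy-type integration by parts that keeps the square. Let $\delta:=\sup_{r\ge R_0}m(r)$. Then $\mathcal A m=r^{-n}M\le\delta/n$ and $\int_{R_0}^r s^{n-1}m^2\,ds\le\delta M(r)$. Differentiating $M(s)^2s^{-n}$ and using $M(R_0)=0$ gives
\[
n\int_{R_0}^r M(s)^2s^{-n-1}\,ds\le2\int_{R_0}^r M(s)M'(s)s^{-n}\,ds\le\frac{2\delta}{n}M(r),
\]
that is, $\mathcal A\bigl((\mathcal A m)^2\bigr)\le\frac{2\delta}{n^2}\mathcal A m$. Since $\widehat m^2\le2m^2+2(\mathcal A m)^2$, this gives $\mathcal A(\widehat m^2)\le C\delta\,\mathcal A m\le C\delta\,\widehat m$. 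Therefore $|D^2\Psi_R(\rho)|\le C\bigl(\widehat m^2+\mathcal A(\widehat m^2)\bigr)\le C\delta\,\widehat m(\rho)$, and so $|N|^2\le CC_H^2\delta^2\widehat m^2$. With $C_H$ fixed, increasing $R_0$ makes $\delta$ small. The absorption and the admissibility of $\underline u_R^\infty$ then close exactly as you intended; this is the paper's argument.
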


\begin{proof}
        The function $\Phi_R$ cancels the radial source in the linearization at $A$.
        Concavity gives the supersolution inequality, while $\Psi_R$ provides the
        positive linearized term needed for the subsolution inequality.

        \smallskip
        \noindent\emph{Preliminaries.}
        Define $H_m:=\mathcal Hm$.
        Tonelli's theorem gives the exact identity
        \[
                -\Psi(R_0)
                =\frac1{n-2}\int_{R_0}^{\infty}r\widehat m(r)^2\,dr.
        \]
        Hence $\Psi(R_0)$ is finite under \eqref{eq:radial-square-integrability}, and
        $\Psi(r)\to0$ by monotone convergence. The same condition also gives
        \[
                H_m(r)=o(r).
        \]
        Indeed, for fixed $R>R_0$ and $r>R$, the inequality
        $\mathcal A m\le\widehat m$ and Cauchy--Schwarz give
        \[
                \frac{H_m(r)}r
                \le \frac{H_m(R)}r
                +\frac1r
                \left(\int_R^r t\widehat m(t)^2\,dt\right)^{1/2}
                \left(\int_R^r t\,dt\right)^{1/2}.
        \]
        First let $r\to\infty$ and then $R\to\infty$.

        \smallskip
        \noindent\emph{The supersolution}
        Since $|f_{0,R}-1|\le m$, the radial identities imply
        \[
                \frac{|(\mathcal H(f_{0,R}-1))'(r)|}{r}
                \le\mathcal A m(r),
        \]
        and
        \[
                |(\mathcal H(f_{0,R}-1))''(r)|
                \le m(r)+(n-1)\mathcal A m(r).
        \]
        Differentiating $\rho$ therefore gives, uniformly in $R$,
        \[
                |D^2\Phi_R(x)|\le C\widehat m(\rho(x)),
                \qquad
                \mathcal L\Phi_R=f_{0,R}(\rho)-1,
                \qquad
                |\Phi_R(x)|\le H_m(\rho(x)).
        \]
        After increasing $R_0$, the eigenvalues of all matrices $A+D^2\Phi_R$
        remain in one fixed compact subset of $\Gamma_k$.

        The tangent plane to the concave operator $F$ at $A$ lies above its graph.
        Hence
        \[
                F(D^2\overline u_R^\infty)
                =F(A+D^2\Phi_R)
                \le1+DF(A)[D^2\Phi_R]
                =f_{0,R}(\rho).
        \]

        \smallskip
        \noindent\emph{The subsolution}
        On the corresponding compact matrix set,
        \[
                F(A+H)\ge1+DF(A)[H]-C_F|H|^2.
        \]
        Let $\delta:=\sup_{r\ge R_0}m(r)$. The required Hessian estimate is a
        direct calculation. If
        \[
                M(r):=\int_{R_0}^r s^{n-1}m(s)\,ds,
        \]
        then
        \[
                \int_{R_0}^r s^{n-1}m(s)^2\,ds\le\delta M(r)
        \]
        and, differentiating $M(s)^2s^{-n}$ and using $M(R_0)=0$,
        \[
                \begin{aligned}
                        n\int_{R_0}^r M(s)^2s^{-n-1}\,ds
                         & =2\int_{R_0}^r M(s)M'(s)s^{-n}\,ds-M(r)^2r^{-n} \\
                         & \le2\int_{R_0}^r M(s)M'(s)s^{-n}\,ds.
                \end{aligned}
        \]
        The discarded upper-boundary contribution is nonpositive in the displayed
        identity. Since $M(s)s^{-n}=\mathcal A m(s)\le\delta/n$, we obtain
        \[
                \int_{R_0}^r s^{n-1}(\mathcal A m(s))^2\,ds
                =\int_{R_0}^r M(s)^2s^{-n-1}\,ds
                \le C\delta M(r).
        \]
        Consequently,
        \[
                \mathcal A(\widehat m^2)(r)\le C\delta\widehat m(r).
        \]
        The radial equation for $\Psi_R$, together with
        $\widehat m\le C\delta$, now gives
        \[
                |D^2\Psi_R(\rho)|\le C\delta\widehat m(\rho).
        \]
        Choose $C_H$ first and then move $R_0$ outward so that
        \[
                C_F|D^2\Phi_R+C_HD^2\Psi_R(\rho)|^2
                \le C_H\widehat m(\rho)^2
        \]
        and their eigenvalues remain in the chosen compact subset of $\Gamma_k$. Since
        \[
                \mathcal L\bigl(C_H\Psi_R(\rho)\bigr)
                =C_H\widehat m(\rho)^2,
        \]
        Taylor's formula yields
        \[
                F(D^2\underline u_R^\infty)
                \ge f_{0,R}(\rho)+C_H\widehat m(\rho)^2
                -C_H\widehat m(\rho)^2
                =f_{0,R}(\rho).
        \]

        Finally, $\Psi_R\le0$, $\Psi_R(R)=0$, and $\Psi\le\Psi_R$.
        The finiteness and decay of $\Psi$ were established above from the Tonelli
        identity. The cutoff is inactive for $r\le\theta_0R$, so $\Phi_R=\Phi$
        there.
\end{proof}

Now return to the general condition $(\mathrm H_A)$. Let $g_0$ and
$\varepsilon$ be as in \eqref{eq:g-tail-decomposition}--
\eqref{eq:g-tail-integrability}, and put
\[
        f_0:=g_0^{1/(k-l)}.
\]
After increasing $R_0$, the root map is bi-Lipschitz on the relevant range.
Thus, after multiplying $\varepsilon$ by a fixed structural constant,
\[
        |f(x)-f_0(\rho(x))|\le\varepsilon(\rho(x)).
\]
For the rest of this subsection set
\[
        m:=|f_0-1|+\varepsilon,
        \qquad \widehat m:=m+\mathcal A m,
        \qquad H_m:=\mathcal Hm.
\]
The assumptions in $(\mathrm H_A)$ imply
\[
        \int_{R_0}^{\infty}r\widehat m(r)^2\,dr<\infty,
        \qquad
        \int_{R_0}^{\infty}r\varepsilon(r)\,dr<\infty.
\]
The functions $\Psi_R$ and $\Psi$ are now formed with this enlarged envelope.
For each truncation radius define
\[
        f_R(x):=1+\chi_R(\rho(x))(f(x)-1),
        \qquad
        f_{0,R}(r):=1+\chi_R(r)(f_0(r)-1),
\]
\[
        \Phi_R(x):=[\mathcal H(f_{0,R}-1)](\rho(x)),
\]
and
\[
        T(r):=\int_r^\infty t^{1-n}
        \int_{R_0}^t s^{n-1}\varepsilon(s)\,ds\,dt.
\]

\begin{lemma}\label{lem:general-far-field-barriers}
        After increasing $C_H$ and $R_0$ if necessary, the functions
        \[
                \overline u_R^\infty
                =P_c+\Phi_R+T(\rho),
        \]
        \[
                \underline u_R^\infty
                =P_c+\Phi_R-T(\rho)+2T(R)
                +C_H\Psi_R(\rho)
        \]
        are $k$-admissible on $\{R_0\le\rho\le R\}$ and satisfy
        \[
                F(D^2\underline u_R^\infty)\ge f_R,
                \qquad
                F(D^2\overline u_R^\infty)\le f_R.
        \]
        They are ordered, agree on $\{\rho=R\}$, and obey
        \[
                0\ge \underline u_R^\infty-\overline u_R^\infty
                \ge-2T(\rho)+C_H\Psi(\rho).
        \]
        In particular, $T(r)\to0$ and $\Psi(r)\to0$ as $r\to\infty$.
\end{lemma}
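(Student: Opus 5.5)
The plan is to repeat the proof of Lemma~\ref{lem:far-field-barriers} with one extra radial corrector $T(\rho)$, which absorbs the angular remainder $f-f_0$. The corrector enters with a plus sign in the supersolution and a minus sign in the subsolution. The constant $2T(R)$ in $\underline u_R^\infty$ is there only so that the two branches agree on $\{\rho=R\}$. All estimates from the previous lemma for $\Phi_R$ and $\Psi_R$ carry over verbatim, now with the enlarged envelope $m=|f_0-1|+\varepsilon$.

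\emph{Step 1: radial calculus for $T$.} I would first differentiate the definition:
\[
T'(r)=-r^{1-n}\int_{R_0}^r s^{n-1}\varepsilon(s)\,ds=-r\,\mathcal A\varepsilon(r),
\qquad
(r^{n-1}T')'=-r^{n-1}\varepsilon .
\]
By the formula for $\mathcal L(h\circ\rho)$ this gives $\mathcal L\,T(\rho)=-\varepsilon(\rho)$. It also gives $|T'|/r\le\mathcal A m\le\widehat m$ and $|T''|\le\varepsilon+(n-1)\mathcal A\varepsilon\le C\widehat m$. Hence $|D^2T(\rho)|\le C\widehat m(\rho)$, with a constant independent of $R$.

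For finiteness and decay of $T$, I would use Tonelli to write
\[
T(r)=\tfrac1{n-2}\Big[r^{2-n}\int_{R_0}^r s^{n-1}\varepsilon\,ds+\int_r^\infty s\,\varepsilon\,ds\Big].
\]
The second term is finite and tends to $0$ because $\int r\varepsilon<\infty$. For the first term, I would split the integral at a fixed $R'$: the bounded piece over $[R_0,R']$ decays like $r^{2-n}$, and the piece over $[R',r]$ is bounded by $\int_{R'}^\infty s\varepsilon\,ds$. Thus $T(r)\to0$. Since $T'\le0$, $T$ is also nonincreasing.

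\emph{Step 2: the two differential inequalities.} Set $H^\pm:=D^2\Phi_R\pm D^2T(\rho)+(\text{for }-)\,C_HD^2\Psi_R(\rho)$. All of these satisfy $|H^\pm|\le C(1+C_H\delta)\widehat m$, where $\delta=\sup_{r\ge R_0}m$. After increasing $R_0$, $\delta$ is small, so the eigenvalues of $A+H^\pm$ lie in a fixed compact subset of $\Gamma_k$; this gives admissibility. The key pointwise fact is
\[
|f_R-f_{0,R}(\rho)|=\chi_R|f-f_0(\rho)|\le\varepsilon(\rho).
\]

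For the supersolution, concavity gives
\[
F(A+H^+)\le1+\operatorname{tr}(GH^+)=f_{0,R}(\rho)-\varepsilon(\rho)\le f_R.
\]
For the subsolution, the quadratic Taylor bound gives
\[
F(A+H^-)\ge f_{0,R}(\rho)+\varepsilon(\rho)+C_H\widehat m^2-C_F|H^-|^2 .
\]
It then suffices to arrange $C_F|H^-|^2\le C_H\widehat m^2$. I would first take $C_H\ge4C_FC^2$, and then enlarge $R_0$ so that $2C_FC^2C_H^2\delta^2\le C_H/2$.

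\emph{Step 3: ordering.} The difference is
\[
\underline u_R^\infty-\overline u_R^\infty=-2T(\rho)+2T(R)+C_H\Psi_R(\rho).
\]
Both pieces are $\le0$ for $\rho\le R$: the first because $T$ is nonincreasing, the second because $\Psi_R\le0$. Both vanish at $\rho=R$. Dropping $2T(R)\ge0$ and using $\Psi_R\ge\Psi$ gives the lower bound $-2T(\rho)+C_H\Psi(\rho)$.

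\emph{Main obstacle.} The main delicacy is uniformity when $R_0$ is increased, because $\mathcal A$, $\Psi$ and $T$ all depend on the lower limit $R_0$. I expect this to be harmless. The structural constants ($C$ in $|D^2\Phi_R|,|D^2T|\le C\widehat m$, the constant in $\mathcal A(\widehat m^2)\le C\delta\widehat m$, and $C_F$) depend only on $n$, $G$ and the compact set. Meanwhile $\delta$ decreases monotonically as $R_0$ grows, so $C_H$ can be fixed before $R_0$ is moved outward.
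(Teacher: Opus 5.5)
Your proposal is correct and follows the paper's proof essentially step for step. You use concavity together with $\mathcal L(T\circ\rho)=-\varepsilon(\rho)$ for the upper branch, the quadratic Taylor bound with $C_H$ fixed before $R_0$ is moved outward for the lower branch, and the monotonicity of $T$ together with $\Psi_R\ge\Psi$ for the ordering; your Tonelli split showing $T(r)\to0$ and your remark on the $R_0$-dependence of $\mathcal A$, $\Psi$ and $T$ only make explicit what the paper leaves implicit.
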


\begin{proof}
        The envelope gives
        \[
                f_{0,R}(\rho)-\varepsilon(\rho)
                \le f_R
                \le f_{0,R}(\rho)+\varepsilon(\rho).
        \]
        Moreover,
        \[
                \mathcal L\Phi_R=f_{0,R}(\rho)-1,
                \qquad
                \mathcal L(T\circ\rho)=-\varepsilon(\rho).
        \]
        Concavity therefore gives the upper inequality directly:
        \[
                F(D^2\overline u_R^\infty)
                \le f_{0,R}(\rho)-\varepsilon(\rho)
                \le f_R.
        \]
        For the lower branch, set
        \[
                H_R:=D^2\Phi_R-D^2(T\circ\rho)+C_HD^2\Psi_R.
        \]
        After increasing $R_0$, all matrices $A+H_R$ lie in one fixed compact
        neighborhood of $A$ whose eigenvalues lie in $\Gamma_k$. Taylor's formula on this
        neighborhood gives
        \[
                F(A+H_R)\ge 1+DF(A)[H_R]-C_F|H_R|^2.
        \]
        Since
        \[
                \mathcal L\Phi_R=f_{0,R}(\rho)-1,
                \qquad
                \mathcal L(-T\circ\rho)=\varepsilon(\rho),
                \qquad
                \mathcal L(C_H\Psi_R)=C_H\widehat m(\rho)^2,
        \]
        we obtain
        \begin{equation}\label{eq:general-lower-taylor}
                F(A+H_R)
                \ge f_{0,R}(\rho)+\varepsilon(\rho)
                +C_H\widehat m(\rho)^2-C_F|H_R|^2.
        \end{equation}
        Let
        \[
                \delta:=\sup_{r\ge R_0}m(r).
        \]
        The radial estimates from Lemma~\ref{lem:far-field-barriers} and the definition
        of $T$ give constants $C_0,C_1$, independent of $R$, such that
        \[
                |D^2\Phi_R|+|D^2(T\circ\rho)|\le C_0\widehat m(\rho),
                \qquad
                |D^2\Psi_R|\le C_1\delta\widehat m(\rho).
        \]
        Consequently
        \[
                |H_R|\le (C_0+C_1C_H\delta)\widehat m(\rho).
        \]
        Choose $C_H>2C_FC_0^2+1$ first. Since $m(r)\to0$, we may then move
        $R_0$ outward until
        \[
                C_F(C_0+C_1C_H\delta)^2\le C_H.
        \]
        Substitution in \eqref{eq:general-lower-taylor} yields
        \[
                F(D^2\underline u_R^\infty)
                \ge f_{0,R}(\rho)+\varepsilon(\rho)
                \ge f_R.
        \]
        The same smallness choice keeps both Hessians in the fixed compact
        neighborhood of $A$, with eigenvalues in $\Gamma_k$.

        The second integral condition implies
        \[
                T(R_0)
                =\frac1{n-2}\int_{R_0}^\infty r\varepsilon(r)\,dr<\infty,
        \]
        so $T$ is nonnegative, nonincreasing, and tends to zero. Finally,
        \[
                \underline u_R^\infty-\overline u_R^\infty
                =-2\bigl(T(\rho)-T(R)\bigr)
                +C_H\Psi_R(\rho).
        \]
        This expression is nonpositive, vanishes at $\rho=R$, and has the asserted
        lower bound because $\Psi_R\ge\Psi$.
\end{proof}

For the rest of the construction, retain
\[
        \Phi(x):=[\mathcal H(f_0-1)](\rho(x))
\]
and use the convention $T\equiv0$ in the $A$-adapted radial case. Set
\[
        \Phi^-:=\Phi-T(\rho),
        \qquad
        \Phi^+:=\Phi+T(\rho).
\]
Letting the truncation radius tend to infinity at a fixed tail point in
Lemma~\ref{lem:general-far-field-barriers} gives the untruncated inequalities
\[
        F\bigl(D^2(P_c+\Phi^-+C_H\Psi(\rho))\bigr)\ge f,
        \qquad
        F\bigl(D^2(P_c+\Phi^+)\bigr)\le f.
\]
Thus $P_c+\Phi^-+C_H\Psi(\rho)$ and $P_c+\Phi^+$ are lower and upper comparison functions on the tail. Both have Hessian tending to $A$, and their truncated versions agree on the moving outer boundary.

\subsection{Global comparison functions}

We now complete the comparison system on the fixed part of the exterior domain. We first construct a strict lower barrier near the obstacle, then join it to the far-field subsolution, and finally solve a Dirichlet problem on a fixed inner domain to obtain the upper comparison function.

\begin{lemma}[Strict subsolution near the boundary]
        \label{lem:near-boundary-lower-solution}
        After increasing $R_0$, there exists a $C^2$ function
        $\underline u_\partial$ on a fixed neighborhood of
        $E\cap\{\rho\le2R_0\}$ such that
        \[
                \underline u_\partial=\phi\quad\text{on }\partial\Omega,
                \qquad
                \lambda(D^2\underline u_\partial)\in\Gamma_k,
                \qquad
                F(D^2\underline u_\partial)\ge f.
        \]
        Near $\partial\Omega$, the function agrees with an exponential distance
        barrier and the last inequality is strict.
\end{lemma}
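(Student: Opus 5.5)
We build $\underline u_\partial$ in two layers. A Caffarelli--Nirenberg--Spruck exponential distance barrier handles the collar of $\partial\Omega$. A large convex quadratic handles the rest of the bounded region $E\cap\{\rho\le 2R_0\}$. A smooth maximum glues them.

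\emph{Step 1: extension of the data.} Extend $\phi$ to a $C^{q,\alpha}$ function $\tilde\phi$ on a neighborhood of $\overline E\cap\{\rho\le 3R_0\}$, with $\|\tilde\phi\|_{C^2}$ controlled by $\|\phi\|_{C^{2}(\partial\Omega)}$. For the quadratic layer set
\[
w_K:=\tilde\phi+K\bigl(\rho^2-M\bigr),
\]
where $M$ is a constant with $\rho^2<M$ on $\overline\Omega$. For $K$ large, $D^2w_K\ge cKI$. Homogeneity of $F$ then gives $F(D^2w_K)\ge cK\,F(I)>\sup f$ on the whole bounded region, and $w_K<\phi$ on $\partial\Omega$.

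\emph{Step 2: the collar barrier.} In the collar $\{0\le d_\Omega<\delta\}$ set
\[
v:=\tilde\phi+\frac{N}{\mu}\bigl(1-e^{-\mu d_\Omega}\bigr),
\]
with $N,\mu$ large and $\delta$ small. Then $v=\phi$ on $\partial\Omega$. In principal coordinates adapted to $d_\Omega$, the Hessian of $d_\Omega$ has eigenvalues $(-\kappa_i/(1-\kappa_i d_\Omega))_{i<n}$ and $0$ in the normal direction, with the sign conventions of $\nu$. Hence
\[
D^2v=D^2\tilde\phi+Ne^{-\mu d_\Omega}\bigl(\text{tangential curvature block}-\mu\,\nabla d_\Omega\otimes\nabla d_\Omega\bigr),
\]
and the sign must be chosen so that the normal direction is large positive. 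The correct form is $v=\tilde\phi+N(e^{\mu d_\Omega}-1)/\mu$, or equivalently the CNS barrier $\tilde\phi - t d+ N d^2$. Since the curvature vector of the parallel hypersurfaces lies in a compact subset of $\Gamma_{k-1}$ (Subsection~\ref{subsec:fixed-boundary-geometry}), a matrix with tangential block in $\Gamma_{k-1}$ of size $N$ and a huge normal entry $N\mu$ has eigenvalues in $\Gamma_k$. Moreover $S_k\sim N^k\mu\,\sigma_{k-1}(\kappa)$ while $S_l\lesssim N^l\mu$ (or $N^l$ when $l=0$), so
\[
F(D^2v)^{k-l}\gtrsim N^{k-l}\sigma_{k-1}(\kappa)/\sigma_{l-1}(\kappa).
\]
This becomes larger than $\sup f^{k-l}$ once $N$ is large, after absorbing $D^2\tilde\phi$ as a bounded perturbation. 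This step is the main obstacle. The point is to balance the quotient so that the large normal eigenvalue does not cancel between $S_k$ and $S_l$. I would treat it with the standard CNS/Trudinger lemma: if $\lambda'\in$ a compact subset of $\Gamma_{k-1}$ and $\lambda_n\to\infty$, then $\sigma_k(\lambda',\lambda_n)/\sigma_l(\lambda',\lambda_n)\to\sigma_{k-1}(\lambda')/\sigma_{l-1}(\lambda')>0$. Scaling by $N$ then gives the strict inequality.

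\emph{Step 3: gluing.} Arrange $v>w_K$ near $\partial\Omega$. This holds at $d_\Omega=0$ since $w_K<\phi$. Then choose $K$ so large relative to $N,\mu,\delta$ that $w_K>v$ on $\{d_\Omega=\delta\}$; this works because $K\rho^2$ dominates there once $\delta$ is fixed. Set $\underline u_\partial:=\max_{\text{smooth}}(v,w_K)$ using a regularized maximum. Since both functions are strict subsolutions with eigenvalues in the convex cone $\Gamma_k$, and $F$ is concave, the smooth maximum remains a strict subsolution. Near $\partial\Omega$ it coincides with $v$, the exponential distance barrier, and outside the collar it equals $w_K$. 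Finally, increase $R_0$ only to ensure that $\{\rho\le 2R_0\}$ sits inside the region where the construction is carried out; the constants depend on $R_0$, which is fixed afterward.
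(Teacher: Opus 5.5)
Your Steps 1 and 2 are essentially sound: after your sign correction, $\widetilde\phi+N(e^{\mu d_\Omega}-1)/\mu$ is the paper's exponential distance barrier, and the quotient-limit mechanism is right. (The tangential eigenvalues of $D^2d_\Omega$ are $\kappa_i/(1+\kappa_id_\Omega)$, with $\kappa$ computed with respect to $\nu$.)

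The gap is in Step 3, at the claim that ``$K\rho^2$ dominates there once $\delta$ is fixed''. Since $v$ exists only on the thin collar, the regularized maximum must already equal $w_K$ on $\{d_\Omega=\delta\}$. The boundary condition forces $w_K<v$ on $\partial\Omega$. Write $w_K-\widetilde\phi=K(\rho^2-M)$ and $v-\widetilde\phi=h(d_\Omega)\ge0$ with $h(0)=0$. The two requirements become $\rho^2<M$ on $\partial\Omega$ and $K(\rho^2-M)>h(\delta)>0$ on $\{d_\Omega=\delta\}$, that is, $\max_{\partial\Omega}\rho^2<M<\min_{\{d_\Omega=\delta\}}\rho^2$. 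At any point of the outer face where $\rho^2<M$, enlarging $K$ pushes $w_K$ further \emph{down}, so no choice of $K$ helps. Since $\min_{\{d_\Omega=\delta\}}\rho\le\min_{\partial\Omega}\rho+C\delta$, the window for $M$ is empty unless $\partial\Omega$ lies in a $\rho$-shell of width $O(\delta)$. For a general domain this fails, because $\delta$ must be small for $d_\Omega$ to be smooth and the parallel surfaces to stay $(k-1)$-convex.

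The obstruction is not specific to the quadratic. Any convex $\psi$ with $\psi<0$ on $\partial\Omega$ is negative on $\operatorname{conv}\overline\Omega$. If $\Omega$ is not convex (permitted for $k<n$, since only strict $(k-1)$-convexity is assumed), then $\{d_\Omega=\delta\}$ meets $\operatorname{conv}\overline\Omega$ for every $\delta<\sup_{\operatorname{conv}\overline\Omega}d_\Omega$. At such points $\widetilde\phi+\psi<\widetilde\phi\le v$, whatever the constants. Note also that your argument never uses strict star-shapedness.

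What is missing is an outer layer that is $k$-admissible but not convex, and whose level sets follow $\partial\Omega$. The paper uses the Minkowski functional $\beta_\Omega(r\theta)=r/\varrho_\Omega(\theta)$. It is smooth with nonvanishing gradient by strict star-shapedness, equals $1$ on $\partial\Omega$, and its level sets are the dilates $s\partial\Omega$, which are strictly $(k-1)$-convex because dilation only rescales principal curvatures. For $u_{\mathrm M}=\beta_\Omega^{p}-1+\widetilde\phi-\zeta$ one has $D^2u_{\mathrm M}=D^2\widetilde\phi+p\beta_\Omega^{p-1}D^2\beta_\Omega+p(p-1)\beta_\Omega^{p-2}\nabla\beta_\Omega\otimes\nabla\beta_\Omega$. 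The Newton-transform identity $S_j(\lambda_{\mathrm{tan}}B_\Omega+\lambda_{\mathrm{nor}}q_\Omega\otimes q_\Omega)=\lambda_{\mathrm{tan}}^jS_j(B_\Omega)+\lambda_{\mathrm{nor}}\lambda_{\mathrm{tan}}^{j-1}q_\Omega^T\mathcal T_{j-1}(B_\Omega)q_\Omega$, with $q_\Omega^T\mathcal T_{j-1}(B_\Omega)q_\Omega\ge c_0>0$, shows the rank-one normal term dominates as $p\to\infty$. This gives admissibility and $F\ge f+1$ on the whole fixed ring.

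The crossing is then placed between the bands $\{1\le\beta_\Omega\le\sigma_-\}$ and $\{\sigma_0\le\beta_\Omega\le\sigma_+\}$, both inside the collar. The gap $\sigma_0^{p}-\sigma_-^{p}\to\infty$ overwhelms the bounded collar barrier, and the constant $\zeta$ puts the switch between the bands. With $u_{\mathrm M}$ in place of $w_K$, your regularized-maximum gluing goes through as written.
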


\begin{proof}
        The construction uses the two geometric models recorded in
        Subsection~\ref{subsec:fixed-boundary-geometry}. The distance function gives a strict branch on a boundary collar,
        while a large power of the Minkowski functional extends it across the rest of
        the fixed core.

        Choose a tubular collar on which $d_\Omega$ is smooth, and let
        $\widetilde\phi$ be a smooth extension of $\phi$. Set
        \[
                v_\partial:=\widetilde\phi+\exp(\lambda_\partial d_\Omega)-1.
        \]
        In a principal frame, its Hessian is the sum of a bounded matrix, a positive
        tangential curvature term of size $\lambda_\partial\exp(\lambda_\partial d_\Omega)$,
        and a positive normal term of size
        $\lambda_\partial^2\exp(\lambda_\partial d_\Omega)$. Uniform strict $(k-1)$-convexity of the
        parallel hypersurfaces therefore gives, for all sufficiently large $\lambda_\partial$,
        \[
                \lambda(D^2v_\partial)\in\Gamma_k,
                \qquad F(D^2v_\partial)\ge f+2
        \]
        on a smaller fixed collar. The branch also satisfies
        $v_\partial=\phi$ on $\partial\Omega$.

        Choose $S_{\mathrm M}>1$ so that
        $E\cap\{\rho\le2R_0\}$ is contained in the fixed ring
        $\{1\le\beta_\Omega\le S_{\mathrm M}\}$. Select levels
        \[
                1<\sigma_-<\sigma_0<\sigma_+<S_{\mathrm M}
        \]
        inside the exponential collar, extend $\widetilde\phi$ across the ring, and
        set
        \[
                u_{\mathrm M}:=\beta_\Omega^{p_\mathrm M}-1+\widetilde\phi-\zeta_\mathrm M.
        \]
        The exponent $p_\mathrm M$ will force admissibility and strictness, while the constant
        $\zeta_\mathrm M$ will later determine which branch is selected on the two transition
        bands.

        Put
        \[
                q_\Omega:=\nabla\beta_\Omega,
                \qquad B_\Omega:=D^2\beta_\Omega,
        \]
        \[
                \lambda_{\mathrm{tan}}
                :=p_\mathrm M\beta_\Omega^{p_\mathrm M-1},
                \qquad
                \lambda_{\mathrm{nor}}
                :=p_\mathrm M(p_\mathrm M-1)\beta_\Omega^{p_\mathrm M-2}.
        \]
        Then
        \[
                D^2u_{\mathrm M}
                =D^2\widetilde\phi
                +\lambda_{\mathrm{tan}}B_\Omega
                +\lambda_{\mathrm{nor}}q_\Omega\otimes q_\Omega.
        \]
        For the Newton transform $\mathcal T_{j-1}(B_\Omega)$,
        \[
                \begin{aligned}
                         & S_j(\lambda_{\mathrm{tan}}B_\Omega
                        +\lambda_{\mathrm{nor}}q_\Omega\otimes q_\Omega) \\
                         & \qquad=\lambda_{\mathrm{tan}}^jS_j(B_\Omega)
                        +\lambda_{\mathrm{nor}}\lambda_{\mathrm{tan}}^{j-1}
                        q_\Omega^T\mathcal T_{j-1}(B_\Omega)q_\Omega.
                \end{aligned}
        \]
        The level-set geometry from
        Subsection~\ref{subsec:fixed-boundary-geometry} gives
        \[
                q_\Omega^T\mathcal T_{j-1}(B_\Omega)q_\Omega
                =|q_\Omega|^{j+1}\sigma_{j-1}(\kappa_1,\ldots,\kappa_{n-1})
                \ge c_0>0,
                \qquad 1\le j\le k.
        \]
        Since $\lambda_{\mathrm{nor}}/\lambda_{\mathrm{tan}}=(p_\mathrm M-1)/\beta_\Omega$,
        the rank-one term dominates both the
        $\lambda_{\mathrm{tan}}^jS_j(B_\Omega)$ term and the bounded perturbation $D^2\widetilde\phi$ as
        $p_\mathrm M\to\infty$. Thus $\lambda(D^2u_{\mathrm M})\in\Gamma_k$ and
        \[
                S_k(D^2u_{\mathrm M})\ge c\lambda_{\mathrm{nor}}\lambda_{\mathrm{tan}}^{k-1}.
        \]
        When $l\ge1$, the corresponding upper estimate
        $S_l(D^2u_{\mathrm M})\le C\lambda_{\mathrm{nor}}\lambda_{\mathrm{tan}}^{l-1}$ yields
        $F(D^2u_{\mathrm M})\ge c\lambda_{\mathrm{tan}}$. For $l=0$,
        \[
                F(D^2u_{\mathrm M})\ge c\lambda_{\mathrm{nor}}^{1/k}\lambda_{\mathrm{tan}}^{(k-1)/k}
                \ge c\lambda_{\mathrm{tan}}.
        \]
        Hence, after increasing $p_\mathrm M$ once more,
        \[
                F(D^2u_{\mathrm M})\ge f+1
        \]
        on the fixed ring.

        The growth of $\beta_\Omega^{p_\mathrm M}$ gives
        \[
                \sup_{\{1\le\beta_\Omega\le\sigma_-\}}(u_{\mathrm M}-v_\partial)
                <
                \inf_{\{\sigma_0\le\beta_\Omega\le\sigma_+\}}(u_{\mathrm M}-v_\partial).
        \]
        Choose the shift $\zeta_\mathrm M$ so that $v_\partial$ dominates on the first band and
        $u_{\mathrm M}$ dominates on the second, both with a fixed positive margin.

        At this first use, fix a convex $C^2$ regularization
        $\mathcal M_\delta(p,q)$ of $\max\{p,q\}$. On its transition set,
        \begin{equation}\label{eq:regularized-max-hessian}
                D^2\mathcal M_\delta(p,q)
                =\theta D^2p+(1-\theta)D^2q
                +\lambda D(p-q)\otimes D(p-q),
                \qquad 0\le\theta\le1,
                \quad \lambda\ge0.
        \end{equation}
        Convexity of the admissible matrix set, ellipticity, and concavity of $F$ show that this
        operation preserves admissibility and the common lower bound for $F$.
        A localized regularized maximum therefore equals $v_\partial$ near
        $\partial\Omega$, equals $u_{\mathrm M}$ before leaving the fixed ring, and gives
        the required function $\underline u_\partial$.
\end{proof}

\begin{lemma}[Joining the subsolution]
        \label{lem:gluing-lower-solutions}
        There exists $\delta_{\rm g}>0$ such that, for every sufficiently large $c$
        and every $0<\delta_*<\delta_{\rm g}$, there exists a $C^2$ $k$-admissible
        transition branch $\underline v_{\delta_*}$ on $\{\rho\ge R_0\}$ such that
        \[
                F(D^2\underline v_{\delta_*})\ge f.
        \]
        It connects the fixed-boundary and far-field subsolutions in two ways:
        \begin{enumerate}
                \item \emph{Crossing condition for gluing.} Within the fixed region
                      $\{R_0<\rho<2R_0\}$, the transition branch $\underline v_{\delta_*}$ crosses
                      $\underline u_\partial$ from strictly below to strictly above. The
                      vertical separation margins on either side of the crossing are
                      independent of $c$ and $\delta_*$.
                \item \emph{Behavior at infinity.} On a tail depending on $c$,
                      \[
                              \underline v_{\delta_*}-P_c-\Phi^-\longrightarrow-\delta_*
                              \qquad\text{as }\rho\to\infty.
                      \]
        \end{enumerate}
\end{lemma}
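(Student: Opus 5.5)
We build $\underline v_{\delta_*}$ as a regularized maximum of two branches. The first is the far-field subsolution, shifted down by $\delta_*$. The second is a steep $A$-adapted radial subsolution that is very negative in the fixed region and has the same asymptotics at infinity.

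\textbf{The two branches.} For the far-field piece we use the untruncated lower function from Lemma~\ref{lem:general-far-field-barriers}:
\[
w_\infty:=P_c+\Phi^-+C_H\Psi(\rho)-\delta_*.
\]
It is $k$-admissible with $F(D^2w_\infty)\ge f$ on $\{\rho\ge R_0\}$. Since $\Psi(\rho)\to0$, we have $w_\infty-P_c-\Phi^-\to-\delta_*$.

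For the connecting piece we follow the Caffarelli--Li style. We take
\[
w_a:=P_{c}+\Phi^-+C_H\Psi(\rho)+h_a(\rho),
\]
where $h_a$ is radial with $h_a'\ge0$, $h_a\to -\delta_*/2$ or a similar constant at infinity, and $h_a$ very negative near $\rho=R_0$. A natural choice is $h_a(r)=-a\int_r^\infty t^{1-n}\,dt\,$ minus a constant. This is harmonic for $\mathcal L$, and its Hessian has eigenvalues $\sim a r^{-n}$, one negative of size $(n-1)$ times the others.

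Adding $h_a$ keeps admissibility and the inequality $F\ge f$ only up to a small error. The model inequality we would use, in the spirit of \cite{CaffarelliLi2003,BaoLiLi2014}, is a radial subsolution of the form $\tfrac12x^TAx+\int^{\rho}$ of a solution to an ODE $F(\cdots)=\sup f$. The ODE is chosen so that $F$ stays $\ge \sup f$ even for the large radial part. The admissible range of $a$ is then limited only by staying in $\Gamma_k$, uniformly in $c$.

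Concretely, on $\{R_0\le\rho\le 2R_0\}$ we take a generalized-symmetric subsolution
\[
w=P_0+\Phi^-+C_H\Psi+\int_{R_0}^{\rho}r\,\psi(r)\,dr+c',
\]
with $\psi$ solving the radial ODE $F\ge \|f\|_\infty$ and $\psi(R_0)$ large. This is the Bao--Li--Li construction: the ODE solution exists on $[R_0,\infty)$, $\psi\to$ the $A$-value, and the integral has a finite limit correction.

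\textbf{Crossing and gluing.} We choose the additive constant in the radial branch so that, at infinity, it lies above $w_\infty$ by less than $\delta_*$. We then take
\[
\underline v_{\delta_*}:=\mathcal M_\delta(w_\infty,w_{\rm rad}).
\]
By \eqref{eq:regularized-max-hessian} together with concavity and convexity of $\Gamma_k$, this is admissible with $F\ge f$. Its limit at infinity is $-\delta_*$, since $w_\infty$ dominates there.

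For the crossing with $\underline u_\partial$ we use the steep initial slope. At $\rho=R_0$ the radial branch equals $\underline u_\partial(R_0\text{-level})-1$ by the choice of constant. Its large slope makes it exceed $\underline u_\partial+1$ by $\rho=3R_0/2$. Both margins are fixed and independent of $c$ and $\delta_*$.

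\textbf{Main obstacle.} The hard part is the dependence on $c$. Shifting the constant inside $P_c$ moves the far-field branch up by $c$ everywhere, while the crossing must happen in a fixed region with fixed margins. We would handle this by building the transition branch independently of $c$ near the core. The far-field branch $w_\infty$ is then too large there only when $c$ is large. So we glue in the opposite order: near the core the radial branch dominates $w_\infty$, because the radial branch is very large after its climb. At infinity $w_\infty$ must dominate, which forces the radial branch's asymptotic constant to be at most $c-\delta_*$. The growth of the radial integral, which tends to $\infty$ as the initial slope increases (the mechanism of \cite{CaffarelliLi2003}), supplies this for every large $c$. Verifying that the radial ODE solution's asymptotic constant sweeps all large values continuously is the step we expect to require the most care.
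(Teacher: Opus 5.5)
\textbf{There is a genuine gap: the gluing step fails, and it fails before the continuity question you flag.}

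Any regularized maximum satisfies $\mathcal M_\delta(w_\infty,w_{\rm rad})\ge w_\infty=P_c+\Phi^-+C_H\Psi(\rho)-\delta_*$. This is $\ge c-C$ on the fixed region $\{R_0<\rho<2R_0\}$, while $\underline u_\partial$ does not depend on $c$. For large $c$ your branch therefore lies strictly above $\underline u_\partial$ on that whole region and never crosses it from below.

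Reversing the roles does not repair this. Both branches contain $P_0+\Phi^-+C_H\Psi(\rho)$, so $w_{\rm rad}-w_\infty=\eta(\rho)+c'-c+\delta_*$. In both of your candidates $\eta$ is nondecreasing ($h_a'\ge0$, resp.\ $\psi>0$). Hence $w_\infty$ can dominate only on an \emph{inner} set, never outside a zone where $w_{\rm rad}$ dominates. You also first place $w_{\rm rad}$ above $w_\infty$ at infinity and then say $w_\infty$ dominates there.

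So $w_\infty$ does no work, and the lemma would rest on a single radial branch whose limit constant is exactly $c-\delta_*$. That branch is not actually constructed:
\begin{enumerate}
\item An ODE enforcing $F\ge\|f\|_\infty$ cannot hold on $[R_0,\infty)$ when $\sup f>1$, since $D^2w_{\rm rad}\to A$ forces $F\to1$.
\item Having ``$\psi\to$ the $A$-value'' would change the quadratic part.
\item A subsolution ODE for $F$ around the nonconstant base $A+D^2\mathcal W$, for general $A$, is anisotropic analysis you would have to carry out, not cite.
\item Even granting it, an initial slope that grows with $c$ makes the set where the branch lies below $\underline u_\partial$ shrink onto $\{\rho=R_0\}$. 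The crossing is then not in a fixed region with $c$-independent margins, which is what later allows $S_1$ in Lemma~\ref{lem:upper-solution} to be fixed below the inner crossing region.
\end{enumerate}
Your first candidate $h_a=-\frac a{n-2}\rho^{2-n}$ fails too. It is $\mathcal L$-harmonic, so concavity makes it a second-order \emph{super}solution perturbation. Making it of size $c$ near $R_0$ requires its Hessian $a\rho^{-n}(G^{-1}-n\,D\rho\otimes D\rho)$ to be of size $\sim c$ there, which pushes $A+D^2h_a$ out of $\Gamma_k$ when $k\ge2$.

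\textbf{The missing idea is to separate the two normalizations by scale.}
\begin{enumerate}
\item Near the core, the paper takes $v=P_0+\mathcal W+\ell(\rho)$, with $\mathcal W=\Phi^-+C_H\Psi(\rho)$ and $\ell$ increasing affine. Since $D^2(\ell\circ\rho)=\ell'D^2\rho\ge0$, ellipticity alone keeps $v$ a subsolution. Since $v$ contains no $c$, its crossing with $\underline u_\partial$ on the fixed annuli $K_\pm$ has $c$-independent margins.
\item The constant $c$ enters only at radius $R_*=\kappa c$, through $\underline w^\infty=P_c+\mathcal W-\Theta\rho^{2-n}+Z$ with $\Theta\lesssim cR_*^{n-2}$. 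There $|D^2(\Theta\rho^{2-n})|\lesssim\Theta R_*^{-n}=O(c^{-1})$, so the Taylor expansion at $A$ is legitimate. The quadratic loss is absorbed by the small potential $Z$, which satisfies $\mathcal LZ=\mathfrak e\ge0$.
\item The choice $\Theta\in[\Theta_-,\Theta_+]$ makes $\underline w^\infty-v$ go from $\le-2\delta_2$ at $\rho=R_*$ to $\ge2\delta_2$ at $\rho=2R_*$. A localized regularized maximum therefore switches from $v$ to $\underline w^\infty$ in the correct order. It must be localized, because $v$ grows linearly and would dominate again further out.
\item Finally, $-\delta_*$ is produced by subtracting $\delta_*\chi_*$. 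Here $\chi_*$ varies only on a fixed compact set where the branch equals $v$ and $F(D^2v)-f\ge\eta_0>0$; this is where $\delta_{\rm g}$ comes from.
\end{enumerate}
No nonlinear radial ODE and no shooting argument in a parameter is needed.
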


\begin{proof}
        The boundary and far-field branches have incompatible additive normalizations:
        the former is fixed by $\phi$, whereas the latter contains the parameter $c$.
        The transition is therefore carried out on two scales. An increasing affine
        function creates a fixed crossing near $R_0$, and a decaying
        $\mathcal L$-harmonic term creates a second crossing at a radius comparable to
        $c$. Since the tail comparison function is not exactly quadratic, a vanishing
        radial potential is added to absorb the resulting Taylor errors.

        For a parameter $c$ to be chosen at the end, write
        \[
                P_0(x):=\frac12x^TAx+b\cdot x,
                \qquad
                \mathcal W:=\Phi^-+C_H\Psi(\rho),
        \]
        so that $P_c=P_0+c$, and fix a positive gap $\delta_2$. By
        Lemmas~\ref{lem:far-field-barriers} and
        \ref{lem:general-far-field-barriers}, with $T\equiv0$ in the $A$-adapted radial
        case, $P_c+\mathcal W$ is a subsolution on a far tail and
        $D^2\mathcal W\to0$. Choose fixed matrix neighborhoods
        $\mathcal U_0\Subset\mathcal U\Subset\operatorname{Sym}(n)$ of $A$, chosen so that every matrix in $\mathcal U$ has eigenvalues in $\Gamma_k$ and
        $A+D^2\mathcal W\in\mathcal U_0$ there. On $\mathcal U$, write the Lipschitz bound
        for the derivative of $F$ as
        \[
                \|DF(X)-DF(Y)\|\le L_F\|X-Y\|.
        \]
        Let $\omega(r)\downarrow0$ be a continuous decreasing majorant of
        $\sup_{\rho(x)\ge r}|D^2\mathcal W(x)|$.

        Choose radii $r_-<r_+$ and
        $\gamma>0$ such that
        \[
                R_0<r_--\gamma<r_-+\gamma
                <r_+-\gamma<r_++\gamma<2R_0,
        \]
        and set
        \[
                K_-:=\{|\rho-r_-|\le\gamma\},
                \qquad
                K_+:=\{|\rho-r_+|\le\gamma\}.
        \]
        Fix also a margin $\delta_1>0$. These choices are independent of $c$.

        \smallskip
        \noindent\emph{Step 1.}
        Choose an increasing affine function $\ell$ and set
        \[
                v:=P_0+\mathcal W+\ell(\rho).
        \]
        Its slope is chosen so that
        \[
                \begin{aligned}
                        \ell(r_+-\gamma)-\ell(r_-+\gamma)
                        \ge{} & \sup_{K_+}(\underline u_\partial-P_0-\mathcal W)             \\
                              & -\inf_{K_-}(\underline u_\partial-P_0-\mathcal W)+4\delta_1.
                \end{aligned}
        \]
        A vertical shift of $\ell$ then enforces
        \[
                \underline u_\partial\ge v+\delta_1\quad\hbox{on }K_-,
                \qquad
                v\ge\underline u_\partial+3\delta_1\quad\hbox{on }K_+.
        \]
        Because $D^2(\ell\circ\rho)=\ell'D^2\rho\ge0$, this affine adjustment also
        preserves the subsolution inequality.

        \smallskip
        \noindent\emph{Step 2.} Write
        \[
                \ell(r)=ar+d,
                \qquad a=\ell'>0.
        \]
        Choose $\kappa>0$ so small that
        \[
                2^{n-2}-1-(2^{n-1}-1)a\kappa>0,
                \qquad 1-2a\kappa>0,
        \]
        and then choose $\sigma>0$ so small that
        \begin{equation}\label{eq:crossing-parameter-choice}
                \begin{aligned}
                        2^{n-2}-1-(2^{n-1}-1)a\kappa-(2^{n-2}+1)\sigma & >0, \\
                        1-2a\kappa-\sigma                              & >0.
                \end{aligned}
        \end{equation}
        Set $R_*:=\kappa c$ and define
        \begin{align*}
                \Theta_- & =(c-\ell(R_*)+4\delta_2+\sigma c)R_*^{n-2},     \\
                \Theta_+ & =(c-\ell(2R_*)-4\delta_2-\sigma c)(2R_*)^{n-2}.
        \end{align*}
        The second inequality in \eqref{eq:crossing-parameter-choice} gives
        $\Theta_+>0$ for all sufficiently large $c$. Also
        $1-a\kappa+\sigma>0$, so $\Theta_->0$ for all sufficiently large $c$.
        Moreover,
        \begin{align*}
                \Theta_+-\Theta_-
                =R_*^{n-2}\Big\{ & c\big[2^{n-2}-1-(2^{n-1}-1)a\kappa
                                            -(2^{n-2}+1)\sigma\big]         \\
                                 & -(2^{n-2}-1)d-4(2^{n-2}+1)\delta_2\Big\},
        \end{align*}
        which is positive for all sufficiently large $c$ by the first inequality in
        \eqref{eq:crossing-parameter-choice}. Thus
        $0\le\Theta_-<\Theta_+$. Fix $\Theta\in[\Theta_-,\Theta_+]$.

        \smallskip
        \noindent\emph{Step 3.}
        Choose $M>0$ first and then $\Lambda>0$ relative to $M$, both independently of $c$. Set
        \[
                \mathfrak p(r):=\exp\left(\Lambda\int_{R_*}^r\frac{\omega(s)}s\,ds\right),
        \]
        \[
                \mathfrak e(r):=M\left[
                        \Theta\omega(r)\mathfrak p(r)r^{-n}
                        +\Theta^2\mathfrak p(r)^2r^{-2n}\right],
        \]
        \[
                z(r):=-\int_r^\infty t^{1-n}
                \int_{R_*}^t s^{n-1}\mathfrak e(s)\,ds\,dt,
                \qquad Z(x):=z(\rho(x)).
        \]
        Fix $0<\vartheta<\min\{n-2,n/2\}$. Since $\omega(r)\to0$, increasing $c$
        gives $\Lambda\omega(r)\le\vartheta$ for $r\ge R_*$, and therefore
        \[
                1\le\mathfrak p(r)\le(r/R_*)^\vartheta.
        \]
        The exterior potential is finite, and direct differentiation, together with
        $\mathfrak p'=\Lambda\omega\mathfrak p/r$, gives
        \begin{equation}\label{eq:gluing-potential-hessian}
                \begin{aligned}
                        \mathcal LZ       & =\mathfrak e(\rho),                           \\
                        |D^2Z|\le CM\big( & \Theta\omega(\rho)\mathfrak p(\rho)\rho^{-n}
                        +\Theta^2\mathfrak p(\rho)^2\rho^{-2n}                            \\
                                          & +\Lambda^{-1}\Theta\mathfrak p(\rho)\rho^{-n}
                        +\Theta^2R_*^{-n}\rho^{-n}\big).
                \end{aligned}
        \end{equation}
        \smallskip
        \noindent\emph{Step 4.}
        Set
        \[
                J(x):=D^2(\rho^{2-n})(x),
                \qquad
                B(x):=A+D^2\mathcal W(x),
                \qquad
                H(x):=-\Theta J(x)+D^2Z(x).
        \]
        Then $\mathcal L(\rho^{2-n})=0$ and $|J(x)|\le C\rho(x)^{-n}$.
        Since $P_c+\mathcal W$ is a subsolution,
        \[
                F(B(x))\ge f(x).
        \]
        Taylor's formula at $B(x)$ and the Lipschitz continuity of $DF$ on
        $\mathcal U$ give
        \begin{align}
                F(B+H)
                 & \ge F(B)+DF(B)[H]-C|H|^2\notag \\
                 & \ge f(x)+DF(A)[H]
                -C\omega(\rho)|H|-C|H|^2.\label{eq:gluing-taylor}
        \end{align}
        Because $\mathcal L(\rho^{2-n})=0$ and $\mathcal LZ=\mathfrak e(\rho)$,
        \[
                DF(A)[H]=\mathfrak e(\rho).
        \]

        We now verify that the last two terms in
        \eqref{eq:gluing-taylor} are absorbed by $\mathfrak e$. Put
        \[
                X(r):=\Theta\mathfrak p(r)r^{-n},
                \qquad
                \eta_*:=\sup_{r\ge R_*}\bigl(\omega(r)+X(r)\bigr).
        \]
        Since $\mathfrak p(r)\le(r/R_*)^\vartheta$ with $\vartheta<n$ and
        $\Theta R_*^{-n}=O(R_*^{-1})$, one has $\eta_*\to0$ as $c\to\infty$.
        Moreover, the last term in \eqref{eq:gluing-potential-hessian} satisfies
        \[
                \Theta^2R_*^{-n}r^{-n}
                \le \Theta R_*^{-n}X(r)\le\eta_*X(r),
        \]
        and hence
        \begin{equation}\label{eq:gluing-potential-hessian-compressed}
                |D^2Z|
                \le CM\Big(\omega X+X^2+(\Lambda^{-1}+\eta_*)X\Big).
        \end{equation}
        Since
        \[
                \mathfrak e=M(\omega X+X^2),
        \]
        we obtain from \eqref{eq:gluing-potential-hessian-compressed}
        \begin{align*}
                C\omega\bigl(\Theta r^{-n}+|D^2Z|\bigr)
                 & \le \frac{C}{M}\mathfrak e
                +C\omega\mathfrak e
                +C(\Lambda^{-1}+\eta_*)\mathfrak e, \\
                C|H|^2
                 & \le \frac{C}{M}\mathfrak e
                +CM\eta_*^2\mathfrak e
                +CM(\Lambda^{-1}+\eta_*)^2\mathfrak e.
        \end{align*}
        Here we used $X\le\eta_*$ and
        $\omega X+X^2=X(\omega+X)\le2\eta_*X$. Choose $M$ first so that the
        terms $C/M$ are small, then choose $\Lambda$ so that the terms containing
        $M\Lambda^{-1}$ are small, and finally increase $c$ so that
        $\eta_*$ and $\omega(R_*)$ are sufficiently small. With these choices,
        \[
                C\omega(\rho)|H|+C|H|^2\le\mathfrak e(\rho).
        \]
        Substitution in \eqref{eq:gluing-taylor} proves
        \[
                F(B+H)\ge f(x).
        \]
        The same smallness choices ensure $B+H\in\mathcal U$. Thus
        \[
                \underline w^\infty
                :=P_c+\mathcal W-\Theta\rho^{2-n}+Z,
                \qquad
                F(D^2\underline w^\infty)\ge f(x)
        \]
        on $\rho\ge R_*$, with the Hessian remaining in $\mathcal U$.

        \smallskip
        \noindent\emph{Step 5.}
        Since $\mathfrak e\ge0$, Tonelli's theorem and
        $\mathfrak p(s)\le(s/R_*)^\vartheta$ give, for $r=R_*,2R_*$,
        \begin{align*}
                -z(r)  & =\frac1{n-2}\left[
                                             r^{2-n}\int_{R_*}^{r}s^{n-1}\mathfrak e(s)\,ds
                                             +\int_r^\infty s\mathfrak e(s)\,ds\right], \\
                |z(r)| & \le C\Theta\omega(R_*)R_*^{2-n}
                +C\Theta^2R_*^{2-2n}\le Cc\omega(R_*)+C=o(c),
        \end{align*}
        where $\Theta\le CcR_*^{n-2}$ and $R_*=\kappa c$.
        Taking $c$ above the finitely many thresholds in Steps 2--5, the definitions
        of $\Theta_-$ and $\Theta_+$ imply
        \[
                \underline w^\infty-v\le-2\delta_2
                \quad\hbox{on }\{\rho=R_*\},
                \qquad
                \underline w^\infty-v\ge2\delta_2
                \quad\hbox{on }\{\rho=2R_*\}.
        \]
        A localized regularized maximum joins $v$ and $\underline w^\infty$ across
        the moving annulus. Denote the resulting lower branch by $\underline w$.
        On its far tail,
        \[
                \underline w-(P_c+\Phi^-)
                =C_H\Psi(\rho)-\Theta\rho^{2-n}+Z\longrightarrow0.
        \]
        Finally choose fixed levels $r_++\gamma<s_0<s_1$, independently of $c$;
        after increasing $c$ once more, $s_1<R_*$. Let $\chi_*(\rho)$ be a fixed
        cutoff which is zero below $s_0$ and one above $s_1$. On the fixed compact set
        $\operatorname{supp}D^2\chi_*$ the branch agrees with the intermediate function
        $v$. Since
        \[
                D^2v=D^2(P_0+\mathcal W)+aD^2\rho,
                \qquad a>0,
        \]
        the lower comparison inequality on the tail, $D^2\rho\ge0$, and uniform ellipticity
        on the fixed compact Hessian range give a strict margin
        \[
                F(D^2v)-f\ge\eta_0>0
        \]
        for some $\eta_0$ independent of $c$. The relevant Hessians form a compact
        matrix set whose eigenvalues lie in a compact subset of $\Gamma_k$. Hence
        there is $\delta_{\rm g}>0$, independent of $c$
        once the preceding thresholds have been fixed, such that every
        $0<\delta_*<\delta_{\rm g}$ preserves both admissibility and the inequality
        $F\ge f$ after subtraction of $\delta_*\chi_*$. Set
        \[
                \underline v_{\delta_*}:=\underline w-\delta_*\chi_*.
        \]
        The fixed crossing inequalities are unchanged, and the far-field difference
        converges to $-\delta_*$. This proves both asserted properties.
\end{proof}

To close the comparison system from above, we solve one exact Dirichlet
problem on a fixed core and compare it with the far-field supersolution.

\begin{lemma}\label{lem:upper-solution}
        There exists a fixed level
        $S_1\in(R_0,2R_0)$ such that, for all sufficiently large $c$, the Dirichlet problem
        \[
                \begin{cases}
                        F(D^2 v) = f   & \text{in } E\cap\{\rho<S_1\}, \\
                        v = \phi       & \text{on } \partial\Omega,    \\
                        v = P_c+\Phi^+ & \text{on } \{\rho=S_1\}
                \end{cases}
        \]
        has a classical $k$-admissible solution $\overline v$. This solution lies
        strictly below $P_c+\Phi^+$ on a one-sided neighborhood of the artificial outer
        boundary. If $\nu$ points toward increasing $\rho$, then
        \[
                \nabla(\overline v-P_c-\Phi^+)\cdot\nu>0
                \quad\text{on }\{\rho=S_1\}.
        \]
\end{lemma}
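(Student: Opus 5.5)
The plan is to solve the Dirichlet problem on the fixed bounded domain $D_1:=E\cap\{\rho<S_1\}$ with the solvability result of Appendix~\ref{app:annular-solvability}, using the subsolution from Lemma~\ref{lem:near-boundary-lower-solution}. The comparison principle then gives the strict ordering, and the Hopf lemma gives the normal-derivative inequality.

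\emph{Choice of the level and the domain.} Fix $S_1\in(R_0,2R_0)$, independent of $c$, so that $\{\rho=S_1\}$ is an ellipsoid; in the $\rho$-metric it is a round sphere of radius $S_1$. Since $G>0$, its Euclidean principal curvatures are all positive, so this boundary piece is strictly convex and hence strictly $(k-1)$-convex with respect to the inner normal of $D_1$. The other boundary component $\partial\Omega$ is strictly $(k-1)$-convex by hypothesis. The annulus $D_1$ therefore meets the geometric hypotheses of the annular solvability result, with boundary data $\phi$ on $\partial\Omega$ and the smooth data $P_c+\Phi^+$ on $\{\rho=S_1\}$.

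\emph{Existence of a subsolution.} The appendix result, in the Caffarelli--Nirenberg--Spruck/Guan style, needs a $C^2$ admissible subsolution $\underline w$ with $F(D^2\underline w)\ge f$ and the correct boundary values. I would take the regularized maximum $\mathcal M_\delta$ of $\underline u_\partial$ and a large multiple-shifted copy of the upper data. Since $\underline u_\partial$ is fixed while $P_c+\Phi^+$ grows with $c$, this choice is awkward; the cleaner alternative is to rely on boundary data being attained by a subsolution through $\mathcal M_\delta$ as in Lemma~\ref{lem:gluing-lower-solutions}. Concretely, $\underline w:=\mathcal M_\delta\bigl(\underline u_\partial,\,P_c+\Phi^+ -K(\rho^{2}-S_1^{2})^{\,}\bigr)$ needs adjustment, so I would instead use $\underline w:=\mathcal M_\delta(\underline u_\partial,\,P_c+\Phi^-+C_H\Psi(\rho)-\tau)$ with a suitable constant. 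Near $\partial\Omega$ the first branch wins, since $\underline u_\partial$ there is independent of $c$, and the transition ring is chosen inside $\{R_0<\rho<2R_0\}$ as in Lemma~\ref{lem:gluing-lower-solutions}. Near $\{\rho=S_1\}$ the second branch must agree with $P_c+\Phi^+$. Its gap is $2T(\rho)-C_H\Psi(\rho)$, which is fixed in size, so I would absorb it with an $\mathcal L$-subharmonic radial correction vanishing on $\rho=S_1$.

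I expect this subsolution step to be the main obstacle, because $\Phi^+$ and $\Phi^-$ differ. If it fails, the existence step could instead rely on the appendix result allowing general data for large $c$, together with $\underline u_\partial$-type barriers and the $C^2$ estimates.

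\emph{Ordering and the Hopf inequality.} By Lemma~\ref{lem:general-far-field-barriers}, $P_c+\Phi^+$ is a supersolution on $\{\rho\ge R_0\}$. On $D_1\cap\{\rho>R_0\}$, the solution $\overline v$ solves $F(D^2\overline v)=f\ge F(D^2(P_c+\Phi^+))$, and the two agree on $\rho=S_1$. To get $\overline v<P_c+\Phi^+$ on a one-sided collar, I compare on $\{R'<\rho<S_1\}$ for some $R'\in(R_0,S_1)$. On $\{\rho=R'\}$ the bound $\overline v\le P_c+\Phi^+$ follows for large $c$: $\overline v$ is bounded above by an upper barrier built from $\phi$ and the outer data, such as the $\mathcal L$-harmonic interpolation, whose value at $\rho=R'$ lies below $P_c$ by an amount of order $c(1-(R'/S_1)^{\cdot})$. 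Linearizing, $w:=P_c+\Phi^+-\overline v$ satisfies $a^{ij}D_{ij}w\le0$ with a uniformly elliptic $a^{ij}$, by concavity and the mean value theorem. Moreover $w\ge0$, $w\not\equiv0$, and $w=0$ on $\rho=S_1$. The strong maximum principle gives $w>0$ inside. Since the sphere satisfies the interior ball condition, the Hopf lemma gives $\partial_\nu w<0$, which is exactly $\nabla(\overline v-P_c-\Phi^+)\cdot\nu>0$.
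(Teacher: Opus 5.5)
The ordering and Hopf part of your plan is essentially the paper's argument. The paper uses Euclidean harmonic barriers together with $\Delta\overline v>0$, which is immediate from $\Gamma_k\subset\Gamma_1$. If you use $\mathcal L$-harmonic barriers instead, you must also justify $\mathcal L\overline v\ge0$; this holds because $\operatorname{tr}(GM)=DF(A)[M]\ge F(M)>0$ by concavity and one-homogeneity.

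The genuine gap is the existence step. You never exhibit an admissible subsolution carrying both boundary data, and the candidate you settle on does not work. For $\mathcal M_\delta(\underline u_\partial,\,P_c+\Phi^-+C_H\Psi(\rho)-\tau)$ to select $\underline u_\partial$ near $\partial\Omega$, you need $\tau\gtrsim c$. The reason is that $\underline u_\partial$ is independent of $c$ while $P_c\approx c$ on the inner transition ring, so independence of $c$ by itself favors the second branch, not the first. With such a $\tau$, however, the second branch lies roughly $\tau$ below $P_c+\Phi^+$ on $\{\rho=S_1\}$, not by the bounded amount $2T-C_H\Psi$. A correction vanishing on $\{\rho=S_1\}$ cannot close a gap of that size. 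Moreover, an $\mathcal L$-subharmonic correction does not preserve $F\ge f$ for the nonlinear operator; you need a positive semidefinite Hessian increment. Your fallback is not available either: the result of Appendix~\ref{app:annular-solvability} (via \cite{Guan2023Dirichlet}) presupposes exactly such a full-boundary subsolution and gives nothing for general data.

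The missing idea is the one you discarded because of a sign error. The paper uses the pure quadratic $v_\kappa:=P_c+[\mathcal H(f_0-1)](S_1)+T(S_1)+\frac\kappa2(\rho^2-S_1^2)$. It works for three reasons:
\begin{itemize}
\item Since $\Phi^+$ is constant on $\{\rho=S_1\}$, $v_\kappa$ has exactly the prescribed outer trace.
\item Its Hessian is $A+\kappa G^{-1}$, and $F(A+\kappa G^{-1})=\kappa F(G^{-1}+\kappa^{-1}A)\to\infty$. So $v_\kappa$ is a strict subsolution on the whole core for large $\kappa$, regardless of $f$.
\item Taking $\kappa=\kappa(c)\gg c$ pushes $v_\kappa$ below $\underline u_\partial$ on a fixed inner level.
\end{itemize}
First enlarge $c$ so that $P_c+\Phi^+>\underline u_\partial$ on $\{\rho=S_1\}$, and extend $\underline u_\partial$ to a neighborhood of the closed core. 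Then $\mathcal M_\delta(\underline u_\partial,v_\kappa)$ equals $\underline u_\partial$ near $\partial\Omega$ and $v_\kappa$ near $\{\rho=S_1\}$, which is the required subsolution.

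Your first candidate $P_c+\Phi^+-K(\rho^2-S_1^2)$ fails only because the added Hessian $-2KG^{-1}$ is negative and the function is raised, rather than lowered, inside the core. With $+K$ and $K\gg c$ it also works, since $F(M+N)\ge F(M)+F(N)$ for the concave one-homogeneous $F$. Finally, the convexity of $\{\rho=S_1\}$ that you verify is not needed once such a subsolution is available.
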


\begin{proof}
        Choose fixed regular levels
        \[
                R_0<S_0<S_1<2R_0
        \]
        below the inner crossing region supplied by
        Lemma~\ref{lem:gluing-lower-solutions}, so that
        \[
                \mathcal R:=\{x\in E:S_0<\rho(x)<S_1\}
        \]
        is smooth and connected. Set $D_{\mathrm{core}}:=E\cap\{\rho<S_1\}$.

        Enlarge the domain of $\underline u_\partial$, if necessary, so that it is
        defined near $\overline{D_{\mathrm{core}}}$. For $\kappa>0$, define
        \[
                v_\kappa(x):=P_c(x)+[\mathcal H(f_0-1)](S_1)+T(S_1)
                +\frac\kappa2\bigl(\rho(x)^2-S_1^2\bigr).
        \]
        This branch has the desired outer trace and
        $D^2v_\kappa=A+\kappa G^{-1}$. Hence
        \[
                F(A+\kappa G^{-1})=\kappa F(G^{-1}+\kappa^{-1}A)\longrightarrow\infty.
        \]
        First enlarge $c$ so that $P_c+\Phi^+$ lies above
        $\underline u_\partial$ on $\{\rho=S_1\}$. Then choose $\kappa=\kappa(c)$ so that
        $v_\kappa$ is a strict subsolution and lies below $\underline u_\partial$ on a
        fixed inner level. A localized regularized maximum, using
        \eqref{eq:regularized-max-hessian}, gives an admissible subsolution with the
        prescribed data on both boundary components. The bounded-domain subsolution
        theorem in \cite{Guan2023Dirichlet} therefore produces the exact solution
        $\overline v$ on $D_{\mathrm{core}}$.

        Let $\omega_{\mathrm{out}}$ and $h_0$ be the harmonic functions determined by
        \[
                \begin{cases}
                        \Delta\omega_{\mathrm{out}}=0 & \text{in } E\cap\{\rho<S_1\}, \\
                        \omega_{\mathrm{out}}=0       & \text{on } \partial\Omega,    \\
                        \omega_{\mathrm{out}}=1       & \text{on } \{\rho=S_1\},
                \end{cases}
        \]
        and
        \[
                \begin{cases}
                        \Delta h_0=0     & \text{in } E\cap\{\rho<S_1\}, \\
                        h_0=\phi         & \text{on } \partial\Omega,    \\
                        h_0=P_c+\Phi^+-c & \text{on } \{\rho=S_1\}.
                \end{cases}
        \]
        Since admissibility implies
        $\Delta\overline v>0$, harmonic comparison gives
        \[
                \overline v\le h_0+c\omega_{\mathrm{out}}.
        \]
        The strong maximum principle yields
        \[
                \theta:=\sup_{\{\rho=S_0\}}\omega_{\mathrm{out}}<1.
        \]
        The coefficient of $c$ in the right-hand side is therefore strictly smaller
        than its coefficient in $P_c+\Phi^+$. Increasing $c$ gives
        \[
                \overline v<P_c+\Phi^+
                \quad\text{on }\{\rho=S_0\}.
        \]

        On $\mathcal R$, the function $P_c+\Phi^+$ is a supersolution. The
        comparison principle \cite[Appendix~A, Corollary~A.6]{JiangLiLi2022ExteriorQuotient}
        gives
        \[
                \overline v\le P_c+\Phi^+
                \quad\text{on }\overline{\mathcal R}.
        \]
        The strict inequality on $\{\rho=S_0\}$ and the strong maximum principle make
        the inequality strict in the interior. Since the two functions agree on
        $\{\rho=S_1\}$, the Hopf lemma gives the stated normal derivative. In
        particular, the strict ordering holds on a fixed one-sided collar of the
        artificial boundary.
\end{proof}

\subsection{Assembly and solvability on truncated annuli}

Fix from now on one value of $c$ for which the preceding two lemmas hold and,
unless stated otherwise, one gap $0<\delta_*<\delta_{\rm g}$. The comparison branches
$\underline v_{\delta_*}$ and $\overline v$ are therefore fixed; their dependence
on $c$ (and, for the lower transition branch, on $\delta_*$) is suppressed below.
The three lower comparison functions are the boundary barrier $\underline u_\partial$, the
transition branch $\underline v_{\delta_*}$, and the truncated far-field branch
$\underline u_R^\infty$. The first gluing occurs on fixed annuli, while the
second occurs at a scale proportional to $R$. All fixed-scale data are chosen
before $c$, and $c$ is chosen before the exhaustion radius $R$. Thus every
strict crossing margin is independent of $R$.
\begin{proposition}
        \label{prop:truncated-annulus-solvability}
        For every sufficiently large $R$, set
        \[
                D_R:=E\cap\{\rho<R\},
                \qquad
                f_R(x):=1+\chi_R(\rho(x))(f(x)-1).
        \]
        Then there exists a $C^2$ $k$-admissible subsolution $\underline u_R$ with
        \[
                F(D^2\underline u_R)\ge f_R,
                \qquad
                \underline u_R=\phi\text{ on }\partial\Omega,
                \qquad
                \underline u_R=\overline u_R^\infty\text{ on }\{\rho=R\}.
        \]
        Consequently, the truncated problem
        \begin{equation}\label{eq:annular-problem}
                \begin{cases}
                        F(D^2u_R)=f_R            & \text{in }D_R,            \\
                        u_R=\phi                 & \text{on }\partial\Omega, \\
                        u_R=\overline u_R^\infty & \text{on }\{\rho=R\}
                \end{cases}
        \end{equation}
        has a unique classical $k$-admissible solution.
\end{proposition}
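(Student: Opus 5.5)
The subsolution $\underline u_R$ will be assembled from the three lower branches $\underline u_\partial$, $\underline v_{\delta_*}$ and $\underline u_R^\infty$ by two localized regularized maxima of the form \eqref{eq:regularized-max-hessian}. The first gluing happens on the fixed region $\{R_0<\rho<2R_0\}$. The second happens on an annulus $\{\theta_2 R\le\rho\le\theta_3 R\}$ at a scale proportional to $R$, with $\theta_2<\theta_3<\theta_0$ fixed. Once the subsolution is in hand, solvability of \eqref{eq:annular-problem} follows from the bounded-annulus subsolution theorem recorded in Appendix~\ref{app:annular-solvability} (as in \cite{Guan2023Dirichlet}). Uniqueness follows from the comparison principle \cite[Corollary~A.6]{JiangLiLi2022ExteriorQuotient}. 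The domain $D_R$ is smooth, since $\{\rho=R\}$ is an ellipsoid, and the boundary data and $f_R$ have the required regularity.

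\emph{Inner gluing.} By Lemma~\ref{lem:gluing-lower-solutions}(1), $\underline v_{\delta_*}$ lies strictly below $\underline u_\partial$ on an inner band $K_-$ and strictly above it on an outer band $K_+$, with margins independent of $R$. Take $\mathcal M_\delta(\underline u_\partial,\underline v_{\delta_*})$ with $\delta$ smaller than these margins. It equals $\underline u_\partial$ near $\partial\Omega$, so it equals $\phi$ there, and it equals $\underline v_{\delta_*}$ beyond $K_+$. Each branch satisfies $F\ge f$. On $\{\rho\le 2R_0\}$ we have $f_R=f$ for large $R$, because $\chi_R=1$ for $\rho\le\theta_0R$. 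Concavity of $F$, convexity of the admissible set and ellipticity then show that the rank-one term in \eqref{eq:regularized-max-hessian} preserves $F\ge f=f_R$.

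\emph{Outer gluing.} This is the main step. On $\{\rho\le\theta_0R\}$ we have $\Phi_R=\Phi$ and $f_R=f$, while $\underline v_{\delta_*}$ satisfies $F\ge f$ for all $\rho\ge R_0$. By Lemma~\ref{lem:gluing-lower-solutions}(2),
\[
\underline v_{\delta_*}-P_c-\Phi^-\to-\delta_*,
\]
and Lemma~\ref{lem:general-far-field-barriers} gives
\[
\underline u_R^\infty-P_c-\Phi^-=2T(R)+C_H\Psi_R(\rho),
\qquad
\Psi_R(\rho)=\Psi(\rho)-\Psi(R).
\]
Since $T$ and $\Psi$ tend to zero, this difference is within $o(1)$ of $0$ uniformly on $\{\rho\ge\theta_2R\}$ as $R\to\infty$. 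So on $\{\theta_2R\le\rho\le\theta_3R\}$ the far-field branch $\underline u_R^\infty$ exceeds $\underline v_{\delta_*}$ by roughly $\delta_*$.

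This alone does not produce a crossing: the far-field branch would dominate everywhere. To force $\underline v_{\delta_*}$ to win at the inner edge, I would modify the far-field branch first. Near $\rho=R$ it is pinned to $\overline u_R^\infty$, so the modification must leave it unchanged there. The plan is to replace $\underline u_R^\infty$ on the gluing zone by
\[
\underline u_R^\infty-\delta_*\,\eta(\rho/R),
\]
where $\eta$ is decreasing and vanishes for $\rho\ge\theta_3R$. Alternatively one can use the convex increasing radial term $\epsilon_R(\rho^2/R^2-\theta_3^2)$, whose negative part handles the inner edge. A convex radial adjustment of the form $\frac{\kappa}{2}(\rho^2-R^2)\kappa$-small adds $\kappa G^{-1}$ to the Hessian, and ellipticity then increases $F$, so the subsolution property is kept. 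Concretely, I would use
\[
\underline u_R^\infty+\frac{2\delta_*}{(1-\theta_2^2)R^2}\bigl(\rho^2-R^2\bigr).
\]
This is unchanged on $\rho=R$, has Hessian perturbation of order $\delta_*R^{-2}$, still satisfies $F\ge f_R$, and is lowered by about $2\delta_*$ at $\rho=\theta_2R$. Using the $o(1)$ convergence above, this function is below $\underline v_{\delta_*}$ at $\theta_2R$ and above it near $\theta_3R$, provided $\theta_3$ is chosen close to $1$. For that I would redo the cutoff bookkeeping: either take $\theta_0$ close to $1$, or note that $f_R\le f$ is not needed because $\underline v_{\delta_*}$ only has to satisfy $F\ge f_R$. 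On $\{\theta_0R\le\rho\}$ this can fail, so I would keep the crossing inside $\rho\le\theta_0R$ and choose the quadratic coefficient accordingly, with crossing margins of order $\delta_*$.

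The regularized maximum with $\delta\ll\delta_*$ then gives $F\ge f_R$ across the zone, and beyond it the function coincides with the adjusted far-field branch, which equals $\overline u_R^\infty$ on $\{\rho=R\}$. The main obstacle is this outer crossing. The margins there are only of order $\delta_*$, and I must verify that the $o(1)$ errors $T(R)$, $\Psi(\theta_2R)$ and the convergence rate in Lemma~\ref{lem:gluing-lower-solutions}(2) are eventually smaller than them. That is exactly where $R$ must be taken sufficiently large after $c$ and $\delta_*$ are fixed.
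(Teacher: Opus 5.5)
Your proposal is correct and follows the paper's proof. You take a regularized maximum of $\underline u_\partial$ and $\underline v_{\delta_*}$ on the fixed bands. You then take a second one with the far-field branch lowered by the convex term $\kappa(\rho^2/R^2-1)$: it vanishes on $\{\rho=R\}$, preserves $F\ge f_R$ by ellipticity, and makes the leading difference $\delta_*+\kappa(t^2-1)$ change sign at a proportional radius inside $\{\rho<\theta_0R\}$. Appendix~\ref{app:annular-solvability} and comparison then finish.

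The one thing to fix is your displayed coefficient $\kappa=2\delta_*/(1-\theta_2^2)$. Its sign change sits at $t^2=(1+\theta_2^2)/2\ge 1/2$, which may lie beyond $\theta_0$. Take instead, as the paper does, $\kappa=\delta_*/(1-t_*^2)$ with $\theta_2<t_*<\theta_3<\theta_0$. Also discard the $\eta$ and $\epsilon_R(\rho^2/R^2-\theta_3^2)$ variants. The first necessarily has $\mathcal L(-\delta_*\eta(\rho/R))<0$ somewhere, and $\underline u_R^\infty$ has no strict margin to absorb it; the second alters the outer trace.
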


\begin{proof}
        The final assembly uses two crossings on different scales. A localized
        regularized maximum first joins the boundary branch to the fixed transition
        branch using only fixed data. A second regularized maximum joins that branch
        to a moving far-field branch whose trace already matches the outer annular
        boundary condition.

        \smallskip
        \noindent\emph{Step 1. Matching the far-field branch}
        Define
        \[
                w_R:=\underline u_R^\infty
                +\kappa\left(\frac{\rho^2}{R^2}-1\right).
        \]
        The added Hessian is $2\kappa R^{-2}G^{-1}\ge0$, so ellipticity preserves
        the subsolution inequality. The added function vanishes on $\{\rho=R\}$,
        and the two far-field branches already have the same trace there. Hence
        $w_R=\overline u_R^\infty$ on the outer boundary.

        \smallskip
        \noindent\emph{Step 2. Creating the crossing.}
        Lemma~\ref{lem:gluing-lower-solutions} gives
        \[
                \underline v_{\delta_*}=P_c+\Phi^- -\delta_*+o(1)
                \qquad\text{as }\rho\to\infty.
        \]
        Choose $t_*<\theta_0$ and set
        $\kappa=\delta_*/(1-t_*^2)$. Then choose
        $t_2<t_*<t_3<\theta_0$. On $\rho=tR$ the truncated radial profile equals its
        uncut profile, while
        \[
                T(R),\qquad \Psi_R(tR),\qquad
                \underline v_{\delta_*}-P_c-\Phi^-+\delta_*
        \]
        tend to zero for every fixed $t<\theta_0$. Thus the leading difference
        between $w_R$ and $\underline v_{\delta_*}$ is
        $\delta_*+\kappa(t^2-1)$, which changes sign at $t=t_*$. A uniform gap
        $\delta>0$ therefore remains for all large $R$:
        \[
                \underline v_{\delta_*}\ge w_R+\delta
                \quad\hbox{near }\rho=t_2R,
                \qquad
                w_R\ge\underline v_{\delta_*}+\delta
                \quad\hbox{near }\rho=t_3R.
        \]

        \smallskip
        \noindent\emph{Step 3. Gluing the fixed and moving transitions.}
        First take a localized regularized maximum of $\underline u_\partial$ and
        $\underline v_{\delta_*}$ across the two fixed crossing regions supplied by
        Lemma~\ref{lem:gluing-lower-solutions}. Then take a second
        regularized maximum with $w_R$ across the two proportional slabs. Formula
        \eqref{eq:regularized-max-hessian} preserves both admissibility and the common
        lower bound for $F$. The strict gaps ensure that the resulting function is
        exactly $\underline u_\partial$ near $\partial\Omega$ and exactly $w_R$
        near $\{\rho=R\}$. It is therefore the required full-boundary subsolution
        $\underline u_R$. The bounded-domain continuity theorem recorded in
        Appendix~\ref{app:annular-solvability} now gives a classical admissible
        solution of \eqref{eq:annular-problem}, and comparison gives uniqueness.
\end{proof}

\section{Uniform estimates on expanding annuli}\label{sec:estimates}

We first collect the annular objects constructed in Section~\ref{sec:construction}.
Let $R_j\to\infty$ and write
\[
        u_j:=u_{R_j},\qquad D_j:=E\cap\{\rho<R_j\},\qquad
        \chi_j(x):=\chi\!\left(\frac{\rho(x)}{R_j}\right),\qquad
        f_j:=1+\chi_j(f-1).
\]
Set $\Phi_j:=\Phi_{R_j}$. The ordered far-field pair is
\begin{equation}\label{eq:standing-far-field-pair}
        \begin{aligned}
                \overline u_j^\infty
                 & =P_c+\Phi_j+T(\rho),                             \\
                \underline u_j^\infty
                 & =P_c+\Phi_j-T(\rho)+2T(R_j)+C_H\Psi_{R_j}(\rho).
        \end{aligned}
\end{equation}
The two branches solve the opposite comparison inequalities for $f_j$, agree
on $\{\rho=R_j\}$, and the annular solution satisfies
\begin{equation}\label{eq:standing-annular-problem}
        \begin{cases}
                F(D^2u_j)=f_j            & \text{in }D_j,            \\
                u_j=\phi                 & \text{on }\partial\Omega, \\
                u_j=\overline u_j^\infty & \text{on }\{\rho=R_j\}.
        \end{cases}
\end{equation}
Equations~\eqref{eq:standing-far-field-pair}--\eqref{eq:standing-annular-problem}
are the standing comparison system for this section. All dependence on the
expanding radius is carried by the cutoff profile and the common outer trace;
the comparison functions on the fixed inner region do not change with $j$.

The estimates are derived in an order that keeps all constants independent of $R_j$. We first establish uniform $C^0$ comparison bounds. We then treat the two boundary components separately: the inner boundary is fixed, while the outer boundary is rescaled to a fixed ellipsoid. Once both boundary Hessians are controlled, the radius-independent boundary-to-interior estimate gives a uniform Hessian bound on the whole annulus.

\subsection{Uniform \texorpdfstring{$C^0$}{C0} comparison estimates}

We begin by transferring the fixed and far-field comparison functions to uniform bounds for the annular solutions.

\begin{lemma}[Uniform comparison bounds]
        \label{lem:uniform-value-collar}
        Let $S_1$ be the fixed outer level from Lemma~\ref{lem:upper-solution}. After
        discarding finitely many indices, the annular solutions satisfy
        \[
                \underline u_\partial\le u_j\le\overline v
                \qquad\text{on }\{\rho\le S_1\},
        \]
        and
        \begin{equation}\label{eq:uniform-tail-comparison}
                \underline u_j^\infty-C(c)
                \le u_j\le\overline u_j^\infty+C(c)
                \qquad\text{on }\{\rho\ge S_1\},
        \end{equation}
        with $C(c)$ independent of $j$. Consequently, wherever $f_j=f$,
        \begin{equation}\label{eq:uniform-value-remainder}
                |u_j-P_c|
                \le C(c)+CH_m(\rho)+C|\Psi(\rho)|+CT(\rho).
        \end{equation}
\end{lemma}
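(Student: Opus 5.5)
The plan is to compare $u_j$ on $D_j$ with the two global comparison functions already assembled, using the comparison principle on bounded domains \cite[Appendix~A, Corollary~A.6]{JiangLiLi2022ExteriorQuotient}.

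\emph{Lower bound.} Proposition~\ref{prop:truncated-annulus-solvability} supplies a subsolution $\underline u_{R_j}$ with $F(D^2\underline u_{R_j})\ge f_j$ and the same boundary data as $u_j$ on $\partial\Omega$ and on $\{\rho=R_j\}$. Comparison therefore gives $\underline u_{R_j}\le u_j$ on $D_j$. By construction, $\underline u_{R_j}$ is a regularized maximum of three pieces: $\underline u_\partial$, $\underline v_{\delta_*}$ and $w_j=\underline u_j^\infty+\kappa(\rho^2/R_j^2-1)$. Hence $u_j\ge\underline u_\partial$ wherever $\underline u_\partial$ is defined, in particular on $\{\rho\le S_1\}$. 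On $\{\rho\ge S_1\}$ we also have $u_j\ge w_j\ge\underline u_j^\infty-\kappa$. Since $\kappa=\delta_*/(1-t_*^2)$ does not depend on $j$, this gives the left inequality in \eqref{eq:uniform-tail-comparison}.

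\emph{Upper bound.} Define $\overline U_j$ to be $\overline v$ on $E\cap\{\rho\le S_1\}$ and $\min\{\overline v,\overline u_j^\infty\}$ on a thin collar just inside $\{\rho=S_1\}$. Beyond that collar, set $\overline U_j=\overline u_j^\infty$, which for $\rho\le\theta_0R_j$ equals $P_c+\Phi^+$ because $\Phi_j=\Phi$ there. Lemma~\ref{lem:upper-solution} gives $\overline v<P_c+\Phi^+$ on a one-sided collar, together with the strict Hopf inequality on $\{\rho=S_1\}$. So the minimum is attained by $\overline v$ near the collar's inner edge and by $P_c+\Phi^+$ just outside $\{\rho=S_1\}$, once we continue $\overline v$ slightly past $S_1$ using the outward-derivative sign. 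A minimum of two supersolutions is a viscosity supersolution of $F=f_j$ (note $f_j=f$ for $\rho\le\theta_0R_j$), and $\overline U_j$ has boundary data $\phi$ and $\overline u_j^\infty$. The viscosity comparison principle then gives $u_j\le\overline U_j$. This yields $u_j\le\overline v$ on $\{\rho\le S_1\}$ and $u_j\le\overline u_j^\infty$ beyond, so the constant on the upper side can be taken to be $0$ in \eqref{eq:uniform-tail-comparison}.

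A simpler route is also available. On $\{\rho\ge S_1\}$, apply comparison directly between $u_j$ and $\overline u_j^\infty+C(c)$ on $D_j\cap\{\rho>S_1\}$, after first bounding $u_j$ on $\{\rho=S_1\}$ by $\max\overline v$. In this version $C(c)=\sup_{\{\rho=S_1\}}|\overline v-P_c|+C$, which is independent of $j$. The bound $u_j\le\overline v$ on the core follows by the same comparison of $u_j$ against the minimum.

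\emph{The pointwise bound \eqref{eq:uniform-value-remainder}.} Where $f_j=f$, we have $\chi_j=1$, i.e. $\rho\le\theta_0R_j$. There $\Phi_j=\Phi$ and $|\Phi|\le H_m(\rho)$. Also $0\le T(R_j)\le T(\rho)$ and $|\Psi_{R_j}|\le|\Psi|$. Substituting these into both sides of \eqref{eq:uniform-tail-comparison}, and using the first display on the compact core, gives the estimate.

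The main obstacle is the upper bound across the level $\{\rho=S_1\}$. Here $\overline v$ and the far-field supersolution must be glued with a strict crossing, and that strict crossing is exactly what the Hopf conclusion of Lemma~\ref{lem:upper-solution} is designed to provide.
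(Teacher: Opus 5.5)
You have a genuine gap in the lower bound on $\{\rho\ge S_1\}$. You infer $u_j\ge w_j$ there from $u_j\ge\underline u_{R_j}$, assuming a regularized maximum dominates each of its pieces. But the gluing in Proposition~\ref{prop:truncated-annulus-solvability} is a \emph{localized} regularized maximum. The function $\underline u_{R_j}$ coincides with $w_j$ only beyond the outer slab near $\rho=t_3R_j$; on $\{S_1\le\rho\le t_2R_j\}$ it equals $\underline u_\partial$ or $\underline v_{\delta_*}$, and these lie far below $w_j$:
\begin{itemize}
\item In the fixed region both are independent of $c$ (there $\underline v_{\delta_*}=P_0+\mathcal W+\ell(\rho)$), while $w_j=P_c+O(1)$ with $O(1)$ independent of $c$ and $j$.
\item At radii comparable to $R_*$, the transition branch still carries $-\Theta\rho^{2-n}$, which is of size comparable to $c$.
\end{itemize}
The localization is unavoidable, since the glued function must equal $\phi$ on $\partial\Omega$ while $w_j$ sits near $P_c$ with $c$ large.

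Your conclusion $u_j\ge\underline u_j^\infty-\kappa$, with $\kappa$ independent of $c$, is in fact false in general. Take $(k,l)=(1,0)$ and $g\equiv1$, so that $G=I$ and $\Phi=T=\Psi=0$. Then $u_j-P_c$ is harmonic in $D_j$, with data $\phi-P_0-c$ on $\partial\Omega$ and $0$ on $\{\rho=R_j\}$. Write it as a bounded harmonic function minus $c\,\omega_j$, where $\omega_j$ has data $1$ and $0$ and increases with $j$. This gives $u_j-P_c\le C_0-c\,\eta_0$ on $\{\rho=S_1\}$, with $C_0,\eta_0>0$ independent of $j$ and $c$. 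That lies below $-\kappa$ for the large $c$ the construction requires.

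The same overstatement appears in ``$u_j\ge\underline u_\partial$ wherever $\underline u_\partial$ is defined''. That inequality holds on $\{\rho\le S_1\}$ only because $S_1$ was chosen below the inner crossing region, so $\underline u_{R_j}=\underline u_\partial$ there.

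The repair is the shell comparison the paper uses, which you already invoke in your ``simpler route'' for the upper side.
\begin{itemize}
\item On $\{\rho=S_1\}$ you have $u_j\ge\underline u_\partial$.
\item Also $\sup_{\{\rho=S_1\}}(\underline u_j^\infty-\underline u_\partial)\le C(c)$ uniformly in $j$, because $\Phi_j=\Phi$, $T(R_j)\le T(S_1)$ and $\Psi_{R_j}\le0$ there.
\item The function $\underline u_j^\infty-C(c)$ is a subsolution of $F=f_j$ on $\{S_1<\rho<R_j\}$, and it equals $u_j-C(c)$ on $\{\rho=R_j\}$.
\end{itemize}
Comparison on this shell therefore gives the left inequality in \eqref{eq:uniform-tail-comparison}. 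The constant genuinely depends on $c$, as the statement allows.

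The rest of your argument matches the paper. Your upper function is the paper's piecewise $\overline U_j$, made a viscosity supersolution by the Hopf sign of Lemma~\ref{lem:upper-solution}. Your continuation of $\overline v$ past $S_1$ is not itself a supersolution, but the minimum never selects it there, so it does no harm. The final substitution yielding \eqref{eq:uniform-value-remainder} is correct.
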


\begin{proof}
        Two comparison arguments yield the estimate. Comparison on the fixed core
        provides a common normalization at a fixed interface, and comparison on the
        outer shell propagates that normalization to the moving boundary.

        On the fixed region, the global annular subsolution agrees with
        $\underline u_\partial$, so comparison gives
        $\underline u_\partial\le u_j$. For the upper bound, combine the exact core
        solution with the far-field supersolution in the following piecewise function:
        \[
                \overline U_j=
                \begin{cases}
                        \overline v,          & \rho\le S_1, \\
                        \overline u_j^\infty, & \rho\ge S_1.
                \end{cases}
        \]
        The traces agree on $\{\rho=S_1\}$. More importantly, the strict Hopf sign
        from Lemma~\ref{lem:upper-solution} prevents an admissible test function from
        touching the interface from below. Hence $\overline U_j$ is a viscosity upper comparison
        function; consequently, $u_j\le\overline U_j$. Thus
        \[
                \underline u_\partial\le u_j\le\overline v
                \quad\hbox{on }\{\rho\le S_1\}.
        \]

        At the interface $\{\rho=S_1\}$, the two far-field branches differ by a
        bounded amount independent of $j$. Enlarging a single constant $C(c)$ gives
        the desired lower and upper inequalities there. Comparison on the remaining
        outer shell propagates these inequalities all the way to $\rho=R_j$, proving
        \eqref{eq:uniform-tail-comparison}. Finally, $|\Phi_j|\le H_m$ and
        \[
                \Psi(\rho)\le\Psi_{R_j}(\rho)\le0.
        \]
        Substituting these bounds, together with $T(R_j)\le T(\rho)$ on the relevant tail, into the comparison estimate yields
        \eqref{eq:uniform-value-remainder}. Because the matching constant is fixed at
        $S_1$, it remains independent of the outer radius.
\end{proof}

\subsection{Boundary gradient and Hessian estimates}

We next estimate the derivatives on the two boundary components. The inner boundary is handled on a fixed collar, while the moving outer boundary is treated after rescaling to a fixed ellipsoid.

\begin{lemma}[$C^1$ estimate on the inner boundary]\label{lem:fixed-collar-gradient}
        There is a fixed exterior collar of $\partial\Omega$ on which
        \[
                |u_j|+|Du_j|\le C(c)
        \]
        uniformly in $j$.
\end{lemma}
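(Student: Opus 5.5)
The plan is the classical barrier argument for the boundary gradient, with all barriers fixed independently of $j$. For the $C^0$ part, Lemma~\ref{lem:uniform-value-collar} already gives
\[
        \underline u_\partial\le u_j\le\overline v
\]
on $\{\rho\le S_1\}$. Both $\underline u_\partial$ and $\overline v$ are fixed $C^2$ functions on $\overline{E\cap\{\rho\le S_1\}}$: the first by Lemma~\ref{lem:near-boundary-lower-solution}, the second because $c$ is fixed before the exhaustion. Hence $|u_j|\le C(c)$ on that region, and a fortiori on any collar of $\partial\Omega$ contained in it.

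For the gradient on $\partial\Omega$, both comparison functions agree with $u_j$ there, since each equals $\phi$ on $\partial\Omega$. The ordering therefore traps the normal derivative:
\[
        \partial_\nu\underline u_\partial\le\partial_\nu u_j\le\partial_\nu\overline v
        \quad\text{on }\partial\Omega.
\]
The tangential derivatives equal those of $\phi$. So $|Du_j|\le C(c)$ on $\partial\Omega$ with a constant depending only on $\|\underline u_\partial\|_{C^1}$ and $\|\overline v\|_{C^1}$ near $\partial\Omega$. The regularity of $\overline v$ up to $\partial\Omega$ comes from Guan's theorem \cite{Guan2023Dirichlet} on the fixed smooth core; a $C^{1}$ bound at $\partial\Omega$ is all that is needed.

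To pass from $\partial\Omega$ into a collar $\{0\le d_\Omega\le\delta_0\}\subset\{\rho\le S_1\}$, I will use $k$-admissibility, which gives $\Delta u_j>0$, so $u_j$ is subharmonic. I see two routes and will use the simpler one, a maximum principle for the gradient.

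First, bound $|Du_j|$ on the inner collar boundary $\{d_\Omega=\delta_0\}$. This is the delicate point: interior gradient bounds for $k$-convex functions with $k\ge2$ follow from $C^0$ bounds via the standard local gradient estimate for $k$-admissible (in particular subharmonic and semiconvex-in-$\Gamma_k$) functions. In fact $k$-convex functions with $k\ge2$ are locally Lipschitz with $\operatorname{Lip}\le C\operatorname{osc}/\mathrm{dist}$ (Trudinger--Wang). For $k=1$ one can instead use the equation's Lipschitz dependence directly, or the Hessian-quotient gradient estimate of Chou--Wang type.

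Second, on the whole collar, apply the maximum principle to $|Du_j|^2$, or to $Du_j\cdot e$ for a fixed unit vector $e$. Differentiating $F(D^2u_j)=f_j$ in direction $e$ gives
\[
        F^{ab}\partial_{ab}(\partial_eu_j)=\partial_ef_j .
\]
The right side is bounded because $\|g\|_{C^2}<\infty$ and $f_j=f$ on the fixed collar once $R_j$ is large. The difficulty is that $F^{ab}$ is not uniformly elliptic in general. I would handle this by adding the barrier $\underline u_\partial$: since $F$ is concave and $\underline u_\partial$ is a strict subsolution near $\partial\Omega$,
\[
        F^{ab}\partial_{ab}(\underline u_\partial-u_j)\ge F(D^2\underline u_\partial)-F(D^2u_j)\ge\eta_0>0 .
\]
Then $\pm\partial_eu_j+A_0(\underline u_\partial-u_j)$ is a subsolution of the linearized operator for $A_0\ge\|Df\|_\infty/\eta_0$. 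Its maximum on the collar is attained on the two boundary components, where it is controlled by the previous two steps.

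The main obstacle is exactly this lack of uniform ellipticity, together with ensuring that the strict margin $\eta_0$ of $\underline u_\partial$ is uniform on the whole collar. Lemma~\ref{lem:near-boundary-lower-solution} gives $F(D^2v_\partial)\ge f+2$ on a fixed collar, so I will shrink the collar to that one. Everything is independent of $j$ because the core data and $c$ are fixed before $R_j$.
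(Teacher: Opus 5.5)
Your outline is the paper's proof. The $C^0$ bound and the boundary gradient come from trapping $u_j$ between $\underline u_\partial$ and $\overline v$, which share the trace $\phi$. The gradient is then bounded on an artificial inner face, and finally propagated across the collar. One step fails as you justify it: the gradient bound on the face $\{d_\Omega=\delta_0\}$.

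It is false that $k$-convex functions with $2\le k<n$ satisfy $\operatorname{Lip}\le C\operatorname{osc}/\operatorname{dist}$. Take $\max\{0,2-n/k\}<\beta<1$ and $w_\varepsilon(x)=(|x|^2+\varepsilon^2)^{\beta/2}$. For this radial Hessian, $\sigma_j(D^2w_\varepsilon)$ is a positive multiple of $\frac nj\varepsilon^2+(\beta-2+\frac nj)|x|^2>0$ for $j\le k$. So $w_\varepsilon$ is smooth and strictly $k$-convex, and it is bounded on $B_1$ uniformly in $\varepsilon$. Yet $|Dw_\varepsilon|\sim\varepsilon^{\beta-1}\to\infty$ on $\{|x|=\varepsilon\}$. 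What Trudinger and Wang prove for $k$-convex functions is H\"older continuity when $k>n/2$, not a Lipschitz bound; only $k=n$ (convexity) gives the latter. So admissibility plus a $C^0$ bound cannot give the face estimate, and the equation must be used. You need an interior gradient estimate for solutions of $F(D^2u)=f$ with $f$ positive and $C^1$: Chou--Wang when $l=0$, and for the quotient the estimate \cite{Chen2015InteriorGradient} that the paper invokes. It applies because $f_j=f$ near the face for large $j$, $\inf g>0$, $\|g\|_{C^2}<\infty$, and the $C^0$ bound holds on a fixed neighborhood of the face inside $\{\rho\le S_1\}$. Your fallback ``Chou--Wang type'' estimate is the right tool for every $k$, not just $k=1$ (where the equation is the Poisson equation anyway).

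With that substitution the argument is correct. Your last step is a more explicit version of what the paper does by citing the fixed-domain gradient estimate of \cite[Section~2]{Guan2023Dirichlet}. On the collar where $\underline u_\partial=v_\partial$ and $F(D^2v_\partial)\ge f+2$, concavity on the convex admissible set gives $\mathcal L_j(\underline u_\partial-u_j)\ge 2$. Hence $\pm\partial_eu_j+A_0(\underline u_\partial-u_j)$ is $\mathcal L_j$-subharmonic once $A_0\ge\|Df\|_\infty/2$. The bounds on $\partial\Omega$ and on the face, together with $|\underline u_\partial-u_j|\le C(c)$, then close the estimate with no uniform ellipticity needed. Here this self-contained maximum-principle route costs nothing extra, because the strict subsolution is already built.
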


\begin{proof}
        On the exponential collar from Lemma~\ref{lem:near-boundary-lower-solution},
        \[
                \underline u_{\partial}
                =\widetilde\phi+\exp(\lambda_\partial d_\Omega)-1
                \le u_j\le \overline v.
        \]
        All three functions have the same trace $\phi$ on $\partial\Omega$. The
        one-sided normal derivatives of the two comparison functions therefore bound
        $\partial_\nu u_j$, while the tangential derivatives are already determined
        by $\phi$. Hence
        \[
                |Du_j|\le C(c)\qquad\hbox{on }\partial\Omega.
        \]

        To extend the estimate into the domain, choose
        $0<\delta'<\delta''<\delta_0$. The artificial face
        $\{d_\Omega=\delta''\}$ lies a fixed positive distance from the true
        boundary $\partial\Omega$. The uniform $C^0$ bound and the local interior gradient estimate
        \cite{Chen2015InteriorGradient} control $Du_j$ near this face. The gradient is
        therefore bounded on both boundary components of the fixed collar
        $\{0<d_\Omega<\delta''\}$. The fixed-domain gradient estimate in
        \cite[Section~2]{Guan2023Dirichlet}, applicable to the degree-one homogeneous
        quotient operator, then gives
        \[
                \sup_{0\le d_\Omega\le\delta'}
                (|u_j|+|Du_j|)\le C(c).
        \]
        The collar estimate follows with a constant independent of $j$.
\end{proof}

\begin{lemma}\label{lem:fixed-boundary-hessian}
        There is $C(c)$, independent of $j$, such that
        \[
                |D^2u_j|\le C(c)
                \qquad\text{on }\partial\Omega.
        \]
\end{lemma}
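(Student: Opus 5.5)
We follow the classical Caffarelli--Nirenberg--Spruck / Trudinger / Guan scheme for boundary second derivatives on the fixed boundary. Everything is localized to the fixed exponential collar from Lemma~\ref{lem:near-boundary-lower-solution}. On this collar Lemma~\ref{lem:uniform-value-collar} and Lemma~\ref{lem:fixed-collar-gradient} already give $|u_j|+|Du_j|\le C(c)$ uniformly in $j$. Moreover, $f_j=f$ there once $R_j$ is large, since $\chi_j\equiv1$ on $\{\rho\le\theta_0R_j\}$. So every ingredient lives on a $j$-independent domain with $j$-independent data, and it suffices to reproduce the standard bounded-domain argument with constants depending only on these fixed quantities.

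Fix $x_0\in\partial\Omega$ and principal coordinates with $e_n=\nu(x_0)$. We split $D^2u_j(x_0)$ into three blocks and treat them in order.

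\textbf{Tangential--tangential block.} Since $u_j-\phi=0$ on $\partial\Omega$, we have $u_{\alpha\beta}(x_0)=\phi_{\alpha\beta}-\partial_\nu(u_j-\phi)\,\mathrm{II}_{\alpha\beta}$ for $\alpha,\beta<n$. The gradient bound then gives $|u_{\alpha\beta}(x_0)|\le C(c)$.

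\textbf{Mixed block.} We apply the linearized operator $L=F^{ij}\partial_{ij}$ to $T(u_j-\phi)$, where $T$ is the tangential rotation field $\partial_\alpha+\sum_\beta B_{\alpha\beta}(x_\beta\partial_n-x_n\partial_\beta)$ adapted to $\partial\Omega$. Differentiating $F(D^2u_j)=f$ and using the homogeneity of $F$ yields $|L\,T(u_j-\phi)|\le C(1+\sum F^{ii})$. This must be dominated by a barrier $\Upsilon=A_1(\underline u_\partial-u_j)+A_2|x-x_0|^2$ on a small half-ball $\{|x-x_0|<\delta\}\cap E$. The strict inequality $F(D^2\underline u_\partial)\ge f+2$ on the collar, combined with concavity of $F$, gives
\[
L(\underline u_\partial-u_j)\ge F(D^2\underline u_\partial)-F(D^2u_j)\ge 2.
\]
The Guan-type refinement (using that $\lambda(D^2\underline u_\partial)$ lies in a compact subset of $\Gamma_k$) upgrades this to $L(\underline u_\partial-u_j)\ge \varepsilon_0(1+\sum F^{ii})$. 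The maximum principle then gives $|u_{\alpha n}(x_0)|\le C(c)$.

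\textbf{Double normal block.} This is the step we expect to be the main obstacle. Because $\lambda(D^2u_j)\in\Gamma_k$, the quantity $u_{nn}$ is bounded below, so only an upper bound is needed. Expanding $S_k$ and $S_l$ in $u_{nn}$, the ratio behaves like $u_{nn}\,S_{k-1}(D'^2u)/\bigl(u_{nn}S_{l-1}(D'^2u)+\dots\bigr)$ as $u_{nn}\to\infty$. This is harmless provided the tangential matrix $D'^2u_j+\dots$ at $x_0$ lies uniformly inside the $(k-1)$ cone; it therefore suffices to show
\[
\min_{\partial\Omega}\,\bigl(\text{appropriate }\tilde F\text{ of the tangential block}\bigr)\ge c_0>0.
\]
We would prove this via Trudinger's argument, or more directly via Guan's subsolution version from \cite{Guan2023Dirichlet}, Section~2. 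At the minimizing point, one compares the tangential data of $u_j$ with those of the subsolution $\underline u_\partial$. Because $\partial_\nu u_j\le\partial_\nu\overline v$ and $\partial_\nu(u_j-\underline u_\partial)\ge0$, the strict $(k-1)$-convexity of $\partial\Omega$ gives the positive lower bound. The subsequent barrier step again only uses the collar quantities. Since every constant comes from $\phi$, $\partial\Omega$, $\|f\|_{C^2}$, $\underline u_\partial$, $\overline v$ and the collar $C^1$ bound, the resulting estimate $|D^2u_j|\le C(c)$ on $\partial\Omega$ is independent of $j$.
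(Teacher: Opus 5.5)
Your proposal is correct in outline and follows the same route as the paper: the boundary identity for the tangential block, a strict-subsolution barrier for the linearized operator applied to a tangential derivative of $u_j-\widetilde\phi$ for the mixed block, and the same-trace strict-subsolution nondegeneracy (as in Trudinger and Guan) to solve the equation for the normal--normal entry. Two details need fixing. First, the mixed-block barrier should be $A_1(u_j-\underline u_\partial)+A_2|x-x_0|^2$: your $\Upsilon$ is $L$-subharmonic but nonnegative on $\partial\Omega$, so the maximum principle yields nothing at $x_0$. Second, for $l\ge1$ the needed nondegeneracy is $S_{k-1}(B)-gS_{l-1}(B)\ge c_0$ for the tangential block $B$, and mere interiority in $\Gamma_{k-1}$ is not enough. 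This follows here without any minimizing-point argument, because $B$ exceeds the tangential block of $D^2\underline u_\partial$ by $\partial_\nu(u_j-\underline u_\partial)\ge0$ times the second fundamental form. Superadditivity of the concave, degree-one homogeneous $(k-1,l-1)$ quotient then transfers to $u_j$ the strict margin $F(D^2\underline u_\partial)\ge f+2$, which the tangential block of $D^2\underline u_\partial$ inherits by monotonicity of $F$ in the normal entry. The margin comes from that inherited strictness of $\underline u_\partial$; the strict $(k-1)$-convexity only guarantees that the added term does not decrease the quotient.
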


\begin{proof}
        The boundary Hessian is recovered component by component. The boundary trace
        controls the tangential block, the strict subsolution controls the mixed
        block, and the equation then prevents the remaining normal eigenvalue from
        becoming unbounded.

        \smallskip
        \noindent\emph{Tangential derivatives.}
        Work near a boundary point in coordinates in which
        $\partial\Omega$ is the graph $x_n=\gamma(x')$, with
        $\gamma(0)=|D\gamma(0)|=0$. Differentiating the boundary identity twice
        gives
        \[
                (u_j)_{\alpha\beta}
                =\phi_{\alpha\beta}-(u_j)_\nu\,\mathrm{II}_{\alpha\beta},
                \qquad \alpha,\beta<n.
        \]
        The fixed $C^1$ estimate therefore bounds every tangential--tangential
        component.

        \smallskip
        \noindent\emph{Mixed derivatives.}
        Let
        \[
                \mathcal L_j:=F^{pq}(D^2u_j)\partial_{pq}
        \]
        be the linearized operator, and let $\mathcal T_\alpha$ be a smooth tangential
        vector field which agrees with $\partial_\alpha$ at the chosen boundary
        point and is tangent to $\partial\Omega$. Put
        $w_j:=u_j-\widetilde\phi$. Differentiating the equation tangentially and
        using the fixed $C^1$ bound gives
        \begin{equation}\label{eq:tangential-linearized-bound}
                |\mathcal L_j(\mathcal T_\alpha w_j)|
                \le C\left(1+\sum_iF^{ii}(D^2u_j)\right)
        \end{equation}
        on a fixed boundary half-ball.

        The strict collar subsolution supplies the barrier needed to dominate the
        right-hand side. After reducing the half-ball, the construction of
        $\underline u_\partial$ gives
        \[
                F(D^2\underline u_\partial)\ge f+1.
        \]
        The eigenvalues of its Hessian range there lie in a compact subset of
        $\Gamma_k$, so decreasing
        $D^2\underline u_\partial$ by a sufficiently small fixed multiple of the identity
        still gives $F\ge f+\frac12$. By concavity, followed by
        the standard distance-barrier argument, constants $t>0$, $N>0$, and $c_0>0$,
        independent of $j$, can therefore be chosen so that
        \[
                h_j:=u_j-\underline u_\partial+t d_\Omega-Nd_\Omega^2\ge0
                \quad\hbox{on the boundary of the half-ball},
        \]
        and
        \begin{equation}\label{eq:fixed-boundary-strict-barrier}
                \mathcal L_jh_j
                \le-c_0\left(1+\sum_iF^{ii}(D^2u_j)\right).
        \end{equation}
        For $B$ sufficiently large, the functions
        \[
                Bh_j\pm\mathcal T_\alpha w_j
        \]
        are nonnegative on the boundary of the half-ball and are
        $\mathcal L_j$-superharmonic by
        \eqref{eq:tangential-linearized-bound}--
        \eqref{eq:fixed-boundary-strict-barrier}. The maximum principle and a normal
        derivative evaluation at the boundary point therefore give
        \[
                |(u_j)_{\alpha\nu}|\le C(c)
                \qquad\hbox{on }\partial\Omega.
        \]

        \smallskip
        \noindent\emph{Normal--normal derivative.}
        The preceding estimates bound the tangential block and the mixed vector of
        $D^2u_j$ at the boundary. In an adapted frame, write
        \[
                M(t)=\begin{pmatrix}B&p\\ p^T&t\end{pmatrix}.
        \]
        For every $r\ge1$,
        \[
                S_r(M(t))=tS_{r-1}(B)+R_r(B,p),
        \]
        where $R_r$ is independent of $t$. Thus the original quotient equation
        $S_k(M)=gS_l(M)$ becomes
        \begin{equation}\label{eq:normal-entry-equation}
                t\bigl(S_{k-1}(B)-gS_{l-1}(B)\bigr)
                =gR_l(B,p)-R_k(B,p),
        \end{equation}
        with the convention $S_{-1}=0$ when $l=0$. The standard same-trace strict
        subsolution argument in the boundary Hessian estimate gives the quantitative
        normal-direction nondegeneracy needed to solve
        \eqref{eq:normal-entry-equation} for $t$; see
        \cite[Section~4]{Guan2023Dirichlet} or \cite[Theorem~4.2]{Trudinger95}.
        The fixed charts, the $C^1$ bound,
        the strict-subsolution margin, $\inf f$, and $\|f\|_{C^2}$ are all uniform in
        $j$. Since the tangential and mixed entries are already controlled,
        \[
                \sup_{\partial\Omega}|D^2u_j|\le C(c)
        \]
        with no dependence on $R_j$.
\end{proof}

\begin{proposition}\label{prop:outer-boundary}
        There is a constant $C$, independent of $j$, such that
        \[
                |D^2u_j|\le C
                \qquad\text{on }\{\rho=R_j\}.
        \]
\end{proposition}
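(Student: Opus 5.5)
The plan is to rescale the moving outer boundary to a fixed ellipsoid and run the standard boundary Hessian argument there. The rescaled data must be uniformly controlled, and the quadratic scaling must return exactly the Hessian bound. For $x\in D_j$ with $\rho(x)\ge R_j/2$, set $y=x/R_j$ and
\[
        v_j(y):=R_j^{-2}u_j(R_jy),\qquad
        \overline v_j(y):=R_j^{-2}\overline u_j^\infty(R_jy),\qquad
        \underline v_j(y):=R_j^{-2}\underline u_j^\infty(R_jy).
\]
Then $D^2v_j(y)=D^2u_j(R_jy)$, and $F(D^2v_j)=f_j(R_jy)$ on the fixed shell $\mathcal S:=\{1/2<\rho(y)<1\}$. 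On the outer face $\{\rho(y)=1\}$ we have $v_j=\overline v_j$. The right-hand side $f_j(R_j\cdot)$ is positive and uniformly bounded. Its rescaled derivatives acquire factors $R_j$ and $R_j^2$ in the favorable direction, so they are bounded by $\|f\|_{C^2}$ and the fixed cutoff $\chi$. The key point is that the near-boundary band $\{\theta_1\le\rho(y)\le1\}$ lies where $\chi_j=0$, so there $f_j\equiv1$ exactly. The boundary datum $\overline v_j=\tfrac12y^TAy+R_j^{-1}b\cdot y+R_j^{-2}\bigl(c+\Phi_j+T\bigr)$ converges in $C^4$ on the face. This uses $|D^2\Phi_j|\le C\widehat m\to0$, and $R_j^{-1}|D\Phi_j|\le \mathcal A m(R_j)\to0$, $R_j^{-2}|\Phi_j|\le R_j^{-2}H_m(R_j)\to0$ since $H_m=o(r)$. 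The higher derivatives use the radial structure and $g_0\in C^2$; if needed, I would use $T\in C^2$ together with the fact that on $\{\rho=R_j\}$ the datum is radial plus a quadratic.

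Next come the uniform $C^0$ and $C^1$ bounds on the rescaled shell. From Lemma~\ref{lem:uniform-value-collar} and \eqref{eq:uniform-value-remainder}, $|v_j-\tfrac12y^TAy|\le R_j^{-2}\bigl(C(c)+CH_m+C|\Psi|+CT\bigr)\to0$ uniformly on $\mathcal S$. The gradient on the outer face is pinched between the rescaled far-field pair. We have $\underline v_j\le v_j\le\overline v_j$ near the face: this follows by comparison on the outer shell of \eqref{eq:uniform-tail-comparison}, sharpened to $\underline u_j^\infty\le u_j\le\overline u_j^\infty$ near $\{\rho=R_j\}$. The sharpening holds because the glued subsolution $\underline u_R$ equals $w_R\ge\underline u_j^\infty$ there, and $\overline u_j^\infty$ is a supersolution with matching trace. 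Both comparison functions have uniformly bounded $C^2$ norms after rescaling and share the trace of $v_j$. Hence the normal derivative of $v_j$ on the face is uniformly bounded, and the tangential derivatives come from the datum. The interior gradient estimate of \cite{Chen2015InteriorGradient} on $\{\rho(y)=1/2\}$, combined with the fixed-domain gradient estimate of \cite[Section~2]{Guan2023Dirichlet}, then gives $|Dv_j|\le C$ on a fixed collar of the face.

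The boundary Hessian then follows the three-part scheme of Lemma~\ref{lem:fixed-boundary-hessian}. The tangential block comes from $v_{\alpha\beta}=(\overline v_j)_{\alpha\beta}-(v_j-\overline v_j)_\nu\mathrm{II}_{\alpha\beta}$. The mixed block comes from the barrier method, and the normal entry comes from \eqref{eq:normal-entry-equation}. The main obstacle is the strict subsolution required by the mixed and normal steps. The outer face is convex, not $(k-1)$-convex in the inward orientation used earlier; it is an ellipsoid seen from inside. Also, $\underline v_j$ is only an almost-exact subsolution, with vanishing margin.

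My first attempt is to use the rescaled $w_R$ from Step 1 of Proposition~\ref{prop:truncated-annulus-solvability}, namely $\underline v_j+\kappa R_j^{-2}(\rho^2-1)$ in $y$ variables, which is $\underline v_j+\kappa(\rho(y)^2-1)$ up to scaling. Its Hessian exceeds that of $\underline v_j$ by the fixed positive matrix $2\kappa G^{-1}$ after rescaling. This yields a uniform strict margin $F\ge f_j+c_0$ with the same trace, with constants independent of $j$. The margin comes from ellipticity on the compact Hessian range near $A$. If this fails, I would add $\kappa'(\rho(y)^2-1)$ directly with a fixed small $\kappa'$. The quantity $h_j=v_j-(\underline v_j+\kappa'(\rho^2-1))+td-Nd^2$ then feeds the barrier argument exactly as in Lemma~\ref{lem:fixed-boundary-hessian}. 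Once the margin is in hand, the barrier argument, the Guan/Trudinger normal estimate, and the scale invariance $D^2v_j=D^2u_j$ give the uniform bound on $\{\rho=R_j\}$.
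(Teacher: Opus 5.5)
Your strategy is the paper's: rescale by $R_j$ so the face becomes the fixed ellipsoid $\{\rho(y)=1\}$, get $C^0$ from the tail comparison, and bound the normal derivative by same-trace barriers. Then run Guan's boundary Hessian scheme with a strict same-trace subsolution of the form quadratic $+\,\mu(\rho(y)^2-1)$. That is the paper's $q^-$, after it subtracts $b\cdot x+c_j$ so the trace is exactly $\tfrac12y^TAy$. The plan goes through, but three claims are wrong as written and must be repaired.

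First, the rescaled source does not improve under scaling. By the chain rule, $D_y[f_j(R_jy)]=R_j(Df_j)(R_jy)$ and $D_y^2[f_j(R_jy)]=R_j^2(D^2f_j)(R_jy)$. So the bound \eqref{eq:cutoff-source-c2-bound} is useful in $x$-variables but not in $y$-variables. The only region where the rescaled source is uniformly controlled is $\{\rho(y)\ge\theta_1\}$, where it is identically $1$. Consequently, unless $\theta_1\le 1/2$, two steps lack uniform constants: Chen's interior gradient estimate on $\{\rho(y)=1/2\}$, and the fixed-domain gradient and Hessian estimates on $\{1/2<\rho(y)<1\}$. All inner levels must be taken in $(\theta_1,1)$, which is exactly why the paper chooses $\theta_1<\vartheta_0<\vartheta_1<1$.

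Second, under the quadratic scaling $D_y^2=D_x^2$. So the rescaled $w_R$ exceeds $\underline v_j$ in Hessian only by $2\kappa R_j^{-2}G^{-1}\to0$, not by $2\kappa G^{-1}$, and your first attempt has no uniform strict margin. Your fallback $\underline v_j+\kappa'(\rho(y)^2-1)$ with $\kappa'$ fixed is the correct choice; it is the paper's $q^-$ up to a perturbation vanishing in $C^2$. You still need to check that it lies below $v_j$. On an inner face $\{\rho(y)=\vartheta_0\}$ it sits $\kappa'(1-\vartheta_0^2)$ below $\underline v_j$. By \eqref{eq:uniform-tail-comparison}, both $v_j$ and $\underline v_j$ converge uniformly to $\tfrac12y^TAy$ on the band. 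Comparison on the band then gives the ordering for all large $j$.

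Third, $w_R=\underline u_R^\infty+\kappa(\rho^2/R^2-1)\le\underline u_R^\infty$ because $\rho\le R$. Your inequality $w_R\ge\underline u_j^\infty$ is therefore reversed, and the claimed sharpening $\underline u_j^\infty\le u_j$ near the face does not follow from the available comparisons. It is not needed, however. Either $u_j\ge\underline u_R=w_R$ near the face, or $v_j\ge\underline v_j+\kappa'(\rho(y)^2-1)$ from the previous step, already bounds $\partial_\nu v_j$ from one side. On the other side use $\overline v_j$, which is legitimate since $u_j\le\overline u_j^\infty$ on the outer shell, or the paper's harmonic barrier $h^+$.
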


\begin{proof}
        After scaling by $R_j$, the moving boundary becomes a fixed ellipsoid. The
        comparison estimate gives a uniform $C^0$ bound on the scaled collar, and a
        fixed-domain boundary estimate gives the required Hessian control.

        Choose $\theta_1<\vartheta_0<\vartheta_1<1$. On the proportional collar
        $\vartheta_0R_j\le\rho\le R_j$, the cutoff has vanished, so
        $F(D^2u_j)=1$. Write the common outer trace as
        $\overline u_j^\infty=P_c+\psi_j(\rho)$ and set
        \[
                c_j:=c+\psi_j(R_j),\qquad
                \widetilde u_j(y):=R_j^{-2}
                \bigl(u_j(R_jy)-b\cdot(R_jy)-c_j\bigr).
        \]
        The normalization removes the constant part of the radial correction and
        gives
        \[
                F(D_y^2\widetilde u_j)=1,
                \qquad
                \widetilde u_j=\frac12y^TAy
                \quad\hbox{on }\{\rho(y)=1\}.
        \]

        For $x=R_jy$, the comparison estimate gives
        \[
                \begin{aligned}
                        |\widetilde u_j(y)-\frac12y^TAy|
                        \le R_j^{-2}\Big( & |\psi_j(\rho(x))-\psi_j(R_j)|
                        +C|\Psi(\rho(x))|+C(c)\Big).
                \end{aligned}
        \]
        The radial term is bounded by $CH_m$, and $T$ is bounded as well. Since $H_m(r)=o(r^2)$ and
        $\Psi(r)=o(r^2)$, the right-hand side tends uniformly to zero on the fixed
        scaled collar. In particular,
        \[
                \|\widetilde u_j\|_{L^\infty(\{\vartheta_0\le\rho\le1\})}\le B_0
        \]
        with $B_0$ independent of $j$.

        We next obtain a scale-independent $C^1$ bound near the outer face. Choose
        $\mu>0$ so that the quadratic function
        \[
                q^-(y):=\frac12y^TAy+\mu(\rho(y)^2-1)
        \]
        satisfies $q^-\le\widetilde u_j$ on $\{\rho=\vartheta_0\}$ for every $j$.
        It agrees with the outer trace on $\{\rho=1\}$ and
        $F(D^2q^-)=F(A+2\mu G^{-1})>1$, so comparison gives
        $q^-\le\widetilde u_j$ throughout the fixed collar.

        For the upper barrier, let $h^+$ solve the fixed harmonic problem
        \[
                \Delta h^+=0\quad\hbox{in }\{\vartheta_0<\rho<1\},
        \]
        \[
                h^+=\frac12y^TAy\quad\hbox{on }\{\rho=1\},
                \qquad
                h^+=B_0+1\quad\hbox{on }\{\rho=\vartheta_0\}.
        \]
        Since $k$-admissibility implies $\Delta\widetilde u_j>0$ and
        $|\widetilde u_j|\le B_0$ on the inner face, the maximum principle gives
        $\widetilde u_j\le h^+$ on the collar. The two barriers have the same trace
        as $\widetilde u_j$ on the outer face, so their fixed normal derivatives
        bound $\partial_\nu\widetilde u_j$ there. Tangential derivatives are fixed
        by the quadratic trace. An interior gradient estimate controls
        $D\widetilde u_j$ on $\{\rho=\vartheta_1\}$. On
        $\{\vartheta_1<\rho<1\}$ the equation has constant right-hand side; after
        differentiating, each directional derivative satisfies the homogeneous
        linearized equation. The maximum principle therefore propagates the two
        boundary gradient bounds across the outer collar. Hence
        \[
                \|\widetilde u_j\|_{C^1(\{\vartheta_1\le\rho\le1\})}\le C
        \]
        with $C$ independent of $j$. The same strict quadratic lower barrier
        supplies the subsolution required by the fixed-domain boundary Hessian estimate
        in \cite[Section~4]{Guan2023Dirichlet}. Tangential derivatives come from
        the fixed quadratic trace and the second fundamental form of the fixed
        ellipsoid, mixed derivatives from the standard tangential barrier, and the
        strict same-trace subsolution together with the equation controls the
        normal--normal entry; see \cite[Section~4]{Guan2023Dirichlet} or
        \cite[Theorem~4.2]{Trudinger95}. Its constant depends only on the
        fixed scaled geometry, $n,k,l,A,G,\vartheta_0,\vartheta_1$, the strictness
        parameter $\mu$, and $B_0$, and is independent of $R_j$. Hence
        \[
                |D_y^2\widetilde u_j|\le C
                \quad\hbox{on }\{\rho(y)=1\}.
        \]
        Since $D_y^2\widetilde u_j(y)=D_x^2u_j(R_jy)$, scaling back gives the asserted estimate
        on $\{\rho=R_j\}$.
\end{proof}

\subsection{Global Hessian and interior H\"older estimates}

With the boundary Hessians uniformly controlled, we now propagate the estimate through the annulus and then apply interior Evans--Krylov estimates on fixed compact subsets.

\begin{lemma}\label{lem:global-hessian}
        There is $C(c)$, independent of $j$, such that
        \[
                \sup_{D_j}|D^2u_j|\le C(c).
        \]
\end{lemma}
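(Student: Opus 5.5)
The plan is a maximum-principle argument for pure second derivatives on all of $D_j$. The boundary values come from Lemma~\ref{lem:fixed-boundary-hessian} on $\partial\Omega$ and Proposition~\ref{prop:outer-boundary} on $\{\rho=R_j\}$. The auxiliary function will be built from $u_j$ and $P_0$ only, so that no quadratic barrier of size $R_j^2$ enters. I have not verified that the final absorption closes, and I flag below where it could fail.

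\emph{Step 1: differentiate the equation.} Fix a unit vector $\xi$. Differentiating $F(D^2u_j)=f_j$ twice in direction $\xi$ and using concavity of $F$ on $\Gamma_k$ gives
\[
\mathcal L_j(\partial_{\xi\xi}u_j)\ge\partial_{\xi\xi}f_j,
\qquad \mathcal L_j:=F^{pq}(D^2u_j)\partial_{pq}.
\]
Because $f=g^{1/(k-l)}$ with $\inf g>0$ and $\|g\|_{C^2}<\infty$, and because $\chi_j=\chi(\rho/R_j)$ has derivatives of order $R_j^{-1}$ and $R_j^{-2}$, we get $|\partial_{\xi\xi}f_j|\le C_0$ with $C_0$ independent of $j$. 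Moreover $\partial_{\xi\xi}f_j=0$ on $\{\rho\ge\theta_1R_j\}$, where $f_j\equiv1$.

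\emph{Step 2: a positive drift of bounded oscillation.} Homogeneity gives $F^{pq}(D^2u_j)(D^2u_j)_{pq}=f_j$. Concavity gives $F(A)\le F(M)+DF(M)[A-M]$, hence $DF(M)[A]\ge F(A)=1$. Therefore, for fixed $\eta>0$,
\[
\mathcal L_j\bigl((1+\eta)P_0-u_j\bigr)\ge 1+\eta-f_j .
\]
The right-hand side is at least $\eta/2$ wherever $f_j\le1+\eta/2$. This holds on $\{\rho\ge R_0\}$ once $R_0$ is enlarged, since $f\to1$ and $f_j$ interpolates between $f$ and $1$.

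\emph{Step 3: a remainder function replacing the quadratic barrier.} The term $\eta P_0$ grows like $\rho^2$, so it cannot be used directly. Instead I would work with
\[
h_j:=u_j-P_c-\Phi_j,
\]
which by Lemma~\ref{lem:uniform-value-collar} is $o(\rho^2)$ uniformly in $j$ and vanishes up to $T(R_j)$ on the outer boundary. Linearizing at $A$, with $\mathcal L\Phi_j=f_{0,j}(\rho)-1$, gives $\mathcal L_jh_j\ge -C\,\omega_j$, where $\omega_j$ measures $|D^2u_j-A|$. The candidate auxiliary function is
\[
W:=\partial_{\xi\xi}u_j+K\bigl(|h_j|^2\text{-type term}\bigr)\ \text{ or }\ W:=\partial_{\xi\xi}u_j-K h_j .
\]
It should be paired with the strict-subsolution device of Lemma~\ref{lem:fixed-boundary-hessian}: on the fixed inner region, $\underline u_\partial$ minus a small multiple of $|x|^2$ still satisfies $F\ge f+\tfrac12$, and concavity then yields $\mathcal L_j(\underline u_\partial-u_j)\ge c_0(1+\sum F^{ii})$ there. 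Note also that $\sum_iF^{ii}\ge F(I)>0$ by concavity and homogeneity, so any such drift dominates the bounded term $C_0$ after multiplication by $K$. The far part $\{\rho\ge R_0\}$ uses Step 2, and the fixed part uses this inner drift. The two pieces are glued with the same localized regularized maximum as in Proposition~\ref{prop:truncated-annulus-solvability}, and every constant is fixed before $j$.

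\emph{Step 4: conclude.} At an interior maximum of $W$ the differential inequality is violated, so $\sup W$ is attained on $\partial D_j$. There it is bounded by the two boundary Hessian estimates plus $K$ times the bounded boundary values of the auxiliary term. This gives $\partial_{\xi\xi}u_j\le C(c)$. Combined with $\lambda(D^2u_j)\in\Gamma_k\subset\Gamma_1$, which forces $\operatorname{tr}D^2u_j>0$, we obtain $|D^2u_j|\le C(c)$.

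\emph{Main obstacle.} The hard point is Step 3: the auxiliary term must have a drift bounded below uniformly in $j$ while its oscillation over $D_j$ stays bounded independently of $R_j$. The natural quadratic choices have oscillation of order $\eta R_j^2$, which would only give $\partial_{\xi\xi}u_j\le C+K\eta(R_j^2-\rho^2)$, and that bound is not uniform. I would first try a weighted version, replacing $\partial_{\xi\xi}u_j$ by $\rho^{-\epsilon}$-type multiples or exploiting the fact that $\partial_{\xi\xi}f_j$ vanishes for $\rho\ge\theta_1R_j$. If that fails, the fallback is to apply a local Pogorelov-type estimate on balls of fixed radius, using that the $C^1$ bounds of $u_j-P_c$ are scale-invariant.
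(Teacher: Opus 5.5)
\textbf{Step 3 is the gap.} It is not just unfinished: an auxiliary term with the property you ask for cannot exist. Steps 1, 2 and 4 are fine.

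Suppose $\mathcal L_j\psi\ge c>0$ on a ball $B_r(x_0)\subset D_j$ on which $\sum_iF^{ii}(D^2u_j)\le\Lambda$. Then $\psi-\frac{c}{2\Lambda}|x-x_0|^2$ is an $\mathcal L_j$-subsolution, so $\max_{\partial B_r}\psi\ge\psi(x_0)+\frac{cr^2}{2\Lambda}$.

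\begin{itemize}
\item The bound on $\sum_iF^{ii}$ holds as soon as $D^2u_j$ is bounded, by Lemma~\ref{lem:cone-margin}.
\item On $\{R_0<\rho<\theta_1R_j\}$ you only know $|\partial_{\xi\xi}f_j|\le C_0$. The hypotheses give $\|g\|_{C^2}<\infty$ but no decay rate for $D^2f$, and this region contains balls of radius comparable to $R_j$.
\end{itemize}

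So any drift that dominates $C_0$ there has oscillation of order $R_j^2$. For large $j$ such a $\psi$ can exist only if the lemma's conclusion fails. The $\eta R_j^2$ loss you found for the quadratic choice is therefore forced, not an artifact.

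Your specific candidates show this. By \eqref{eq:uniform-tail-comparison} and the core bounds of Lemma~\ref{lem:uniform-value-collar}, $h_j$ is bounded uniformly in $j$, so neither $\pm Kh_j$ nor $Kh_j^2$ can work. Concretely:
\begin{itemize}
\item $\mathcal L_jh_j=f_j-DF(D^2u_j)[A]-\mathcal L_j\Phi_j$ has no sign and is small in the far field wherever $D^2u_j$ is close to $A$.
\item The positive part $2F^{pq}\partial_ph_j\partial_qh_j$ of $\mathcal L_j(h_j^2)$ has no lower bound.
\item The strict-subsolution drift from $\underline u_\partial$ exists only on the fixed core, and a regularized maximum creates no drift elsewhere.
\end{itemize}
The Pogorelov fallback is not available as stated either. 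Since only $\lambda(A)\in\Gamma_k$ is assumed, sublevel sets of $u_j$ minus an affine function need not be bounded. Working at scale $\rho$ instead multiplies $D^2f$ by $\rho^2$.

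\textbf{What the paper does.} It builds no auxiliary function. After recording $f_j\ge c_0$ and $\|f_j\|_{C^2(D_j)}\le C$, it quotes the boundary-to-interior estimate \cite[(2.2)]{ITW2005}, $\sup_{D_j}|D^2u_j|\le\sup_{\partial D_j}|D^2u_j|+C$, and asserts that $C$ has no domain-size dependence. It then inserts Lemma~\ref{lem:fixed-boundary-hessian} and Proposition~\ref{prop:outer-boundary}.

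Your obstruction bears on that step too. With a constant depending only on $\inf f$ and $\|f\|_{C^2}$, such an estimate is not domain-independent over the paper's full range of $(k,l)$. Take $(k,l)=(1,0)$, $D=B_R\setminus\overline{B_1}$ and $|x_0|=R/2$. Let $\zeta$ be $\theta_1^2-\frac1n$, with $\theta=(x-x_0)/|x-x_0|$, smoothly cut off to the shell $\{2\le|x-x_0|\le R/32\}$. Set $f=1+\epsilon\zeta$ and $u=\frac{|x|^2}{2n}+\epsilon\,\Gamma*\zeta$, where $\Gamma$ is the fundamental solution. Then $\Delta u=f>0$, and $\|f\|_{C^2}$ and $\sup_{\partial D}|D^2u|$ are bounded independently of $R$. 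Yet $|\partial_{11}u(x_0)|\ge c\,\epsilon\log R-C$.

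\textbf{What a correct argument needs.} It must use more than $\inf f_j$, $\|f_j\|_{C^2}$ and the boundary Hessians: the decay of $f-1$ from $(\mathrm H_A)$ and the uniform bound on $u_j-P_c-\Phi_j$. One possible route is the following.
\begin{itemize}
\item On $\{S\le\rho\le\theta_1R_j\}$, use a small-perturbation (Savin-type) $C^{2,\alpha}$ estimate on the rescaled balls $B(x,\theta\rho(x))$.
\item On $\{\rho\ge\theta_1R_j\}$, where $\partial_{\xi\xi}f_j=0$, your maximum principle then works directly.
\item On the fixed region $\{\rho\le S\}$, a $K|x|^2$ barrier costs only a fixed constant.
\end{itemize}
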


\begin{proof}
        The preceding boundary results give
        \[
                \sup_{\partial D_j}|D^2u_j|\le C(c).
        \]
        We also record here the source bounds needed by the global estimate. By definition,
        \[
                f_j=1+\chi_j(f-1).
        \]
        This is a convex combination of $f$ and $1$, so $f_j\ge c_0>0$. On the
        transition region $\rho\asymp R_j$,
        \[
                |D\chi_j|\le CR_j^{-1},
                \qquad |D^2\chi_j|\le CR_j^{-2},
        \]
        while outside that region $\chi_j$ is constant. The global $C^2$ bound for
        $f$ therefore gives
        \begin{equation}\label{eq:cutoff-source-c2-bound}
                f_j\ge c_0,
                \qquad \|f_j\|_{C^2(D_j)}\le C,
        \end{equation}
        with constants independent of $j$.

        Write
        \[
                F(D^2u_j)=f_j,
                \qquad
                \mathcal L_jv:=F^{pq}(D^2u_j)v_{pq}.
        \]
        For a unit vector $\xi$, differentiating twice yields
        \[
                \mathcal L_j(u_j)_{\xi\xi}
                =(f_j)_{\xi\xi}
                -F^{pq,rs}(D^2u_j)
                        (u_j)_{pq\xi}(u_j)_{rs\xi}.
        \]
        Concavity makes the last term nonnegative. For a constant source this would
        already give the desired subharmonicity. Here the source derivatives must be
        retained, and adding a large multiple of $\rho^2$ would introduce boundary
        values of order $R_j^2$, destroying radius independence.

        By the boundary-to-interior estimate \cite[(2.2)]{ITW2005}, which applies
        to the quotient operator in \cite[(1.5)]{ITW2005},
        \[
                \sup_{D_j}|D^2u_j|
                \le \sup_{\partial D_j}|D^2u_j|+C.
        \]
        The constant has no domain-size dependence. The source $C^2$ bound follows
        from \eqref{eq:cutoff-source-c2-bound}, while Proposition~\ref{prop:outer-boundary}
        and Lemma~\ref{lem:fixed-boundary-hessian} uniformly control the two boundary
        Hessians. Hence
        \[
                \sup_{D_j}|D^2u_j|\le C(c).
        \]
\end{proof}

The global Hessian bound gives only a bounded matrix range. Before applying
Evans--Krylov, we record the elementary separation argument that turns the
positive right-hand side into a uniform distance from the boundary of the
G\r{a}rding cone.

\begin{lemma}\label{lem:cone-margin}
        For every $B_0,c_0>0$ there is $\delta>0$ such that
        \[
                \lambda(M)\in\Gamma_k,\qquad |M|\le B_0,\qquad F(M)\ge c_0
        \]
        imply
        \[
                \operatorname{dist}\bigl(\lambda(M),\partial\Gamma_k\bigr)\ge\delta.
        \]
\end{lemma}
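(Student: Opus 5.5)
The plan is a compactness argument by contradiction, using that $F$ depends only on the eigenvalue vector. Set
\[
K:=\{\lambda\in\overline{\Gamma}_k:\ |\lambda|\le B_0,\ f(\lambda)\ge c_0\},
\qquad f(\lambda):=\bigl(\sigma_k(\lambda)/\sigma_l(\lambda)\bigr)^{1/(k-l)} .
\]
Since $|M|\le B_0$ controls $|\lambda(M)|$ (up to a fixed norm-equivalence constant, which we absorb into $B_0$), it suffices to prove the following eigenvalue statement: every $\lambda\in\Gamma_k$ with $|\lambda|\le B_0$ and $f(\lambda)\ge c_0$ satisfies $\operatorname{dist}(\lambda,\partial\Gamma_k)\ge\delta$.

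\textbf{Contradiction setup.} Suppose instead that there are $\lambda^{(m)}\in\Gamma_k$ with $|\lambda^{(m)}|\le B_0$ and $f(\lambda^{(m)})\ge c_0$, but $\operatorname{dist}(\lambda^{(m)},\partial\Gamma_k)\to0$. By compactness we may pass to a subsequence with $\lambda^{(m)}\to\lambda^*\in\partial\Gamma_k$; here we use that $\partial\Gamma_k$ is closed. Since $\lambda^*\in\partial\Gamma_k$, we have $\sigma_j(\lambda^*)\ge0$ for all $j\le k$, and at least one $\sigma_j(\lambda^*)$ with $j\le k$ vanishes.

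\textbf{Key fact: $\sigma_k$ vanishes on $\partial\Gamma_k$.} This is the step I expect to be the main point. It follows from the standard structure of Gårding cones. For $\lambda\in\overline\Gamma_k$ one has $\sigma_{j-1}(\lambda)\sigma_{j+1}(\lambda)\le c_{n,j}\sigma_j(\lambda)^2$ by the Newton inequalities. Consequently, if $\sigma_j(\lambda^*)=0$ for some $j<k$, then all higher $\sigma_i(\lambda^*)$ with $i\le k$ vanish as well.

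A cleaner route to the same conclusion uses the fact that $\Gamma_k$ is the connected component of $\{\sigma_k>0\}$ containing $(1,\dots,1)$. Then $\partial\Gamma_k\subset\{\sigma_k=0\}$, because a boundary point with $\sigma_k>0$ would have a neighborhood inside $\{\sigma_k>0\}$ that meets $\Gamma_k$, and that neighborhood would then lie in the same component. Either way, we obtain $\sigma_k(\lambda^*)=0$.

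\textbf{Handling the denominator.} We must make sure $\sigma_l$ does not vanish too, which would leave the quotient undefined. On $\Gamma_k$ the quotient is controlled by the Maclaurin-type inequality
\[
\left(\frac{\sigma_k/\binom nk}{\sigma_l/\binom nl}\right)^{1/(k-l)}\le\left(\frac{\sigma_l}{\binom nl}\right)^{1/l}
\qquad (l\ge1).
\]
Alternatively, one can use the bound $\sigma_k/\sigma_l\le C\sigma_{l+1}/\sigma_l$ and iterate this estimate. Combined with the generalized Newton–Maclaurin bound $\sigma_k/\sigma_l\le C\sigma_1^{k-l}$, this shows that $f$ extends continuously to $\overline{\Gamma}_k\setminus\{0\}$ and equals $0$ wherever $\sigma_k=0$. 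If $\sigma_l(\lambda^*)>0$, continuity is immediate. If $\sigma_l(\lambda^*)=0$, we use
\[
f\le C\bigl(\sigma_{k}/\sigma_{k-1}\bigr)^{\text{weighted}}
\]
together with the concavity and one-homogeneity of $f$. Concretely, $f(\lambda)\le C\,\sigma_k(\lambda)^{1/k}$ fails in general, so instead I would use
\[
\frac{\sigma_k}{\sigma_l}=\prod_{i=l}^{k-1}\frac{\sigma_{i+1}}{\sigma_i},
\qquad
\frac{\sigma_{i+1}}{\sigma_i}\le C\frac{\sigma_k}{\sigma_{k-1}}\ \text{for } i<k
\]
(the ratios are decreasing, by Newton). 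This gives $f\le C\,\sigma_k/\sigma_{k-1}$. Finally, $\sigma_k/\sigma_{k-1}\to0$ as $\sigma_k\to0$ with $\lambda$ bounded: since $\sigma_{k-1}^2\ge c\,\sigma_k\sigma_{k-2}$ and $\sigma_{k-2}$ is bounded, we get $\sigma_k/\sigma_{k-1}\le C\sqrt{\sigma_k}$ after also bounding $\sigma_{k-2}$ by $C$.

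\textbf{Conclusion.} The previous step gives $f(\lambda^{(m)})\le C\sqrt{\sigma_k(\lambda^{(m)})}\to C\sqrt{\sigma_k(\lambda^*)}=0$. This contradicts $f(\lambda^{(m)})\ge c_0$, which proves the existence of $\delta>0$. The only delicate point is making this vanishing of $f$ at $\partial\Gamma_k$ rigorous, including the case where $\sigma_l$ also degenerates; the Newton ratio monotonicity above is what I would rely on for that.
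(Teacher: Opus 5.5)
The overall strategy is the same as the paper's: argue by contradiction with a bounded sequence $\lambda^{(m)}\to\lambda^*\in\partial\Gamma_k$, use $\sigma_k(\lambda^*)=0$, and split according to whether $\sigma_l$ degenerates. Your connected-component argument for $\sigma_k(\lambda^*)=0$ is fine. The Newton route you sketch first stalls after $\sigma_{j+1}$, which is why the paper uses Maclaurin there.

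\textbf{The gap.} It is in the degenerate-denominator case, which is exactly the step you say you would rely on.

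With $E_i:=\sigma_i/\binom{n}{i}$, Newton's inequalities $E_{i-1}E_{i+1}\le E_i^2$ say that $E_{i+1}/E_i$ is \emph{nonincreasing} in $i$. So $E_k/E_{k-1}$ is the smallest ratio, and the telescoping product gives $f\ge c\,\sigma_k/\sigma_{k-1}$, not $f\le C\,\sigma_k/\sigma_{k-1}$. For instance, take $n=3$, $k=2$, $l=0$, $\lambda=(1,\epsilon,\epsilon)$: then $f\approx\sqrt{2\epsilon}$ while $\sigma_2/\sigma_1\approx 2\epsilon$.

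The next step is also reversed. From $\sigma_{k-1}^2\ge c\,\sigma_k\sigma_{k-2}$ you only get $\sigma_k/\sigma_{k-1}\le C\sqrt{\sigma_k/\sigma_{k-2}}$, which needs a \emph{lower} bound on $\sigma_{k-2}$, not an upper one.

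Finally, the bound $f\le C\sqrt{\sigma_k}$ that your conclusion rests on is false in general. For $l=0$ and $k\ge 3$ one has $f=\sigma_k^{1/k}$, and $\lambda=t(1,\dots,1)$ with $t\to 0$ violates it.

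\textbf{The repair.} It is already in your proposal. The generalized Maclaurin inequality you displayed is equivalent to $\sigma_k\le C\sigma_l^{k/l}$, that is, $f\le C\sigma_l^{1/l}$ on $\Gamma_k$ for $l\ge 1$. The three cases then close as follows:
\begin{enumerate}
\item If $l\ge1$ and $\sigma_l(\lambda^*)=0$, this bound gives $f(\lambda^{(m)})\to 0$ directly.
\item If $l\ge1$ and $\sigma_l(\lambda^*)>0$, continuity and $\sigma_k(\lambda^*)=0$ give $f(\lambda^{(m)})\to 0$.
\item If $l=0$, then $f=\sigma_k^{1/k}\to 0$.
\end{enumerate}
This three-case argument is exactly the paper's proof, which phrases the degenerate case as $S_k\le C S_l^{k/l}$. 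Once you replace the Newton-ratio chain by this Maclaurin bound, your proof is complete and essentially identical to the paper's.
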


\begin{proof}
        Otherwise there is a bounded sequence of symmetric matrices $M_i$ with
        $\lambda(M_i)\in\Gamma_k$ and $F(M_i)\ge c_0$ such that, after passing to a
        subsequence, $\lambda(M_i)$ converges to a finite point of $\partial\Gamma_k$.
        At such a cone boundary point the Maclaurin inequalities force $S_k=0$. If
        $l=0$, then $F(M_i)=S_k(M_i)^{1/k}\to0$. If $l\ge1$ and $S_l(M_i)$ stays
        positive, the ratio $S_k(M_i)/S_l(M_i)$ tends to zero. If $S_l(M_i)\to0$,
        then
        \[
                S_k(M_i)\le C S_l(M_i)^{k/l},
        \]
        so the ratio, and hence $F(M_i)$, again tends to zero. This is a
        contradiction.
\end{proof}

\begin{lemma}\label{lem:local-ek}
        For every compact set $K\Subset E$ there are
        $\alpha_K\in(0,1)$ and $C(K,c)$, independent of $j$, such that
        \[
                \|u_j\|_{C^{2,\alpha_K}(K)}\le C(K,c)
        \]
        for all sufficiently large $j$.
\end{lemma}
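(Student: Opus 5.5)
The argument is a standard Evans--Krylov application, carried out once the uniform structural data on $K$ have been secured. Fix $K\Subset E$ and set $\mathrm{dist}(K,\partial E)=:2d>0$ and $K':=\{x:\operatorname{dist}(x,K)\le d\}\Subset E$. Since $K'$ is bounded, $K'\subset D_j$ for all $j$ with $R_j$ large, which is why we discard finitely many indices. On $K'$ the functions $u_j$ solve $F(D^2u_j)=f_j$.

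\emph{Step 1: uniform bounds.} Lemma~\ref{lem:global-hessian} gives $|D^2u_j|\le B_0:=C(c)$ on $K'$, uniformly in $j$. Lemma~\ref{lem:uniform-value-collar} bounds $u_j$ on $K'$ uniformly, since $K'$ lies in a fixed bounded set where $P_c$, $H_m$, $\Psi$ and $T$ are bounded. Interpolation on balls, or integration from a fixed collar using Lemma~\ref{lem:fixed-collar-gradient}, then bounds $Du_j$. Hence $\|u_j\|_{C^{1,1}(K')}\le C(K,c)$. By \eqref{eq:cutoff-source-c2-bound}, $c_0\le f_j$ and $\|f_j\|_{C^2}\le C$ uniformly, so in particular $\|f_j\|_{C^{0,1}(K')}\le C$.

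\emph{Step 2: uniform ellipticity and concavity on the relevant range.} Apply Lemma~\ref{lem:cone-margin} with $B_0$ and $c_0$ (with $c_0$ adjusted to the $(k-l)$-th root, $\inf f_j^{\,}$). It shows that every Hessian $D^2u_j(x)$, $x\in K'$, lies in the compact set
\[
\mathcal K:=\{M\in\operatorname{Sym}(n):|M|\le B_0,\ \operatorname{dist}(\lambda(M),\partial\Gamma_k)\ge\delta\}.
\]
On $\mathcal K$, the operator $F$ is smooth and concave, and its derivative $DF$ is continuous and positive definite. By compactness there are therefore ellipticity constants $0<\lambda_*\le\Lambda_*$ with $\lambda_*I\le DF(M)\le\Lambda_* I$ on $\mathcal K$. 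One technical point remains. Evans--Krylov requires a concave, uniformly elliptic operator on all of $\operatorname{Sym}(n)$, whereas here we have one only on the range. We would handle this by one of two routes. The first is to extend $F|_{\mathcal K}$ (or a slightly enlarged convex neighbourhood, still inside $\Gamma_k$) to a globally concave, uniformly elliptic operator. This can be done by taking the infimum of the tangent affine functions of $F$ at points of a convex neighbourhood of $\mathcal K$, which is concave, and adding a standard uniformly elliptic concave modification. The second is to invoke a version of Evans--Krylov stated for equations whose Hessians lie in a compact set where the operator is concave and uniformly elliptic, for instance Caffarelli--Cabr\'e. The solutions $u_j$ still solve the extended equation, because their Hessians stay in $\mathcal K$.

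\emph{Step 3: Evans--Krylov.} For equations $F(D^2u)=f_j$ with $F$ concave and uniformly elliptic and $f_j\in C^{0,1}$ (so certainly $C^{\alpha}$), the interior $C^{2,\alpha}$ estimate on balls $B_{d}(x_0)\subset K'$, $x_0\in K$, gives
\[
\|u_j\|_{C^{2,\alpha_K}(B_{d/2}(x_0))}\le C\bigl(\|u_j\|_{C^{1,1}(B_d(x_0))}+\|f_j\|_{C^{0,1}(B_d(x_0))}\bigr).
\]
Here $\alpha_K$ and $C$ depend only on $n,\lambda_*,\Lambda_*,d$. The $C^{1,1}$ bound could alternatively be used to differentiate the equation once and appeal to the Krylov--Safonov version. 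Covering $K$ by finitely many such balls finishes the estimate, and all constants are independent of $j$. The main obstacle is the extension issue in Step 2, that is, making rigorous that the Evans--Krylov estimate applies although $F$ is only concave and elliptic on $\Gamma_k$. The uniform cone margin from Lemma~\ref{lem:cone-margin} is precisely what makes this harmless.
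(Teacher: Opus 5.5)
Your proposal is correct and follows essentially the paper's route: uniform value, gradient and Hessian bounds on a slightly enlarged compact set, Lemma~\ref{lem:cone-margin} to confine the Hessian range to a compact subset of $\Gamma_k$, and the interior Evans--Krylov estimate. Your tangent-plane extension is what the paper formalizes later in Lemma~\ref{lem:bellman-extension}, and no extra ``uniformly elliptic modification'' is needed, since that infimum is already uniformly elliptic. The aside about differentiating once and applying Krylov--Safonov only gives $Du_j\in C^{\alpha}$, not a $C^{2,\alpha}$ bound, so drop it and rely on Evans--Krylov.
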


\begin{proof}
        Choose $K\Subset K_+\Subset E$. The value and gradient bounds near the
        fixed boundary from Lemma~\ref{lem:fixed-collar-gradient}, together with the global Hessian estimate $\|D^2u_j\|_{L^\infty} \le C$, control
        $\|u_j\|_{L^\infty(K_+)}$ by integration along fixed paths. For all large
        $j$, the cutoff is inactive on $K_+$, so the equation there has the fixed
        right-hand side $f$.

        The global Hessian bound alone gives a bounded matrix range; the positive
        lower bound for $f$ and Lemma~\ref{lem:cone-margin} show that the eigenvalues
        of this range lie in a compact subset of $\Gamma_k$. Thus $F$ is uniformly
        elliptic and concave on the
        entire range of $D^2u_j$ over $K_+$. The interior Evans--Krylov estimate then
        gives the stated $C^{2,\alpha_K}$ bound on $K$ with a constant independent of
        $j$.
\end{proof}

\begin{proposition}[Estimates independent of the truncation radius]
        \label{prop:annular-estimates}
        Fix $c$ sufficiently large for the construction in Section~\ref{sec:construction}. The solutions $u_j$ satisfy the following estimates with
        constants independent of $j$:
        \begin{enumerate}
                \item on a fixed collar of $\partial\Omega$,
                      \[
                              |u_j|+|Du_j|\le C(c);
                      \]
                \item on the whole truncated domain,
                      \[
                              \sup_{\overline D_j}|D^2u_j|\le C(c);
                      \]
                \item on the untruncated part of the tail,
                      \[
                              |u_j-P_c|\le C(c)+CH_m(\rho)+C|\Psi(\rho)|+CT(\rho);
                      \]
                \item for every $K\Subset E$, there are $\alpha_K>0$ and $C(K,c)$ such that
                      \[
                              \|u_j\|_{C^{2,\alpha_K}(K)}\le C(K,c)
                      \]
                      for all sufficiently large $j$.
        \end{enumerate}
\end{proposition}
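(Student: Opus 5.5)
This proposition collects results already proved in this section, so the plan is to assemble them and check that each constant is independent of $j$. First, fix $c$ as in the construction, together with a gap $\delta_*$. The comparison functions $\underline u_\partial$, $\underline v_{\delta_*}$, $\overline v$ and the far-field profiles $\Phi$, $\Psi$, $T$ then depend only on fixed data and on $c$, not on $R_j$. Item (1) is Lemma~\ref{lem:fixed-collar-gradient}. Its proof uses the sandwich $\underline u_\partial\le u_j\le\overline v$ on $\{\rho\le S_1\}$ from Lemma~\ref{lem:uniform-value-collar}, the interior gradient estimate near an artificial face at fixed distance, and the fixed-domain gradient estimate. None of these involves the outer radius.

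Item (3) is \eqref{eq:uniform-value-remainder}. On the part of the tail where $\chi_j\equiv1$, i.e. $\rho\le\theta_0R_j$, we have $f_j=f$, $\Phi_j=\Phi$ and $|\Phi_j|\le H_m(\rho)$. Moreover $\Psi\le\Psi_{R_j}\le0$ and $0\le T(R_j)\le T(\rho)$. Substituting these into the two-sided bound \eqref{eq:uniform-tail-comparison} gives item (3), with the single constant $C(c)$ fixed at the interface $\{\rho=S_1\}$.

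Item (2) combines Lemma~\ref{lem:fixed-boundary-hessian} on $\partial\Omega$, Proposition~\ref{prop:outer-boundary} on $\{\rho=R_j\}$, and the boundary-to-interior estimate of Lemma~\ref{lem:global-hessian}. The only point to check is that the source bound \eqref{eq:cutoff-source-c2-bound} holds uniformly in $j$. This follows because the derivatives of $\chi_j$ carry the factors $R_j^{-1}$ and $R_j^{-2}$, and because $\|f\|_{C^2}<\infty$ and $\inf f>0$. Since \cite{ITW2005} gives an additive constant with no dependence on the size of the domain, the Hessian bound holds on all of $\overline D_j$.

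Item (4) is Lemma~\ref{lem:local-ek}. For $K\Subset E$ and $j$ large enough that $K_+\subset\{\rho<\theta_0R_j\}$, integrating the bound from item (2) along fixed paths starting at the collar of item (1) gives uniform $C^1$ bounds on $K_+$. Lemma~\ref{lem:cone-margin} with $c_0=\inf f$ then places the Hessians in a compact subset of $\Gamma_k$, where $F$ is uniformly elliptic and concave, and Evans--Krylov gives the $C^{2,\alpha_K}$ bound. The main obstacle is the radius independence of item (2), specifically the outer boundary Hessian bound, which rests on the rescaling in Proposition~\ref{prop:outer-boundary} and the bound $H_m(r)+|\Psi(r)|=o(r^2)$. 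Since these were established earlier, the proof here reduces to checking that the order in which constants are chosen (fixed data first, then $c$, then $j$) introduces no hidden $j$-dependence.
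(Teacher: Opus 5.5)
Your proposal is correct and takes the same route as the paper. The paper states this proposition without a separate proof, as a summary of Lemma~\ref{lem:fixed-collar-gradient}, Lemma~\ref{lem:global-hessian} (fed by Lemma~\ref{lem:fixed-boundary-hessian} and Proposition~\ref{prop:outer-boundary}), estimate \eqref{eq:uniform-value-remainder}, and Lemma~\ref{lem:local-ek}. Your assembly reproduces those arguments, including the uniform cutoff bounds \eqref{eq:cutoff-source-c2-bound}, the domain-size-free constant from \cite{ITW2005}, the rescaled outer estimate using $H_m+|\Psi|=o(r^2)$, and the order of choices: fixed data, then $c$, then $j$.
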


\section{Exterior solutions, boundary regularity, and asymptotics}
\label{sec:compactness}

The uniform annular estimates now allow us to pass to the exterior domain. We first take the limit and identify its asymptotic Hessian and gradient. We then recover regularity at the fixed boundary, so that the limit has the regularity asserted in the main theorem. Finally, under the additional pointwise assumptions, we derive the higher-order expansion and the prescribed-constant result.

\subsection{Construction of the exterior solution and preservation of admissibility}

We first extract an exterior limit from the annular solutions and record the comparison estimate that survives the limit.

\begin{proposition}\label{prop:compactness}
        After passing to a subsequence, the annular solutions converge to a
        $k$-admissible function
        \[
                u\in C^2(E)\cap C^0(E\cup\partial\Omega).
        \]
        More precisely:
        \begin{enumerate}
                \item for every $K\Subset E$, there is $\alpha_K>0$ such that
                      \[
                              u_j\to u\quad\text{in }C^{2,\beta}(K)
                              \quad\text{for every }0<\beta<\alpha_K;
                      \]
                \item the limit solves the original quotient equation
                      \[
                              S_{k,l}(D^2u)=g\quad\text{in }E,
                              \qquad
                              u=\phi\quad\text{on }\partial\Omega;
                      \]
                      equivalently, $F(D^2u)=f$ in $E$;
                \item on a sufficiently far tail,
                      \begin{equation}\label{eq:tail-value-bound}
                              |u(x)-P_c(x)|
                              \le C(c)+CH_m(\rho(x))+C|\Psi(\rho(x))|+CT(\rho(x)).
                      \end{equation}
        \end{enumerate}
\end{proposition}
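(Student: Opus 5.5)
The plan is a diagonal compactness argument built on Proposition~\ref{prop:annular-estimates}, followed by a barrier argument at $\partial\Omega$ for continuity up to the boundary.

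\emph{Convergence on compact sets.} Exhaust $E$ by compact sets $K_1\Subset K_2\Subset\cdots$ with $\bigcup_mK_m=E$. Each $K_m$ lies in $D_j$ for all large $j$. By Proposition~\ref{prop:annular-estimates}(4), $\|u_j\|_{C^{2,\alpha_{K_m}}(K_m)}\le C(K_m,c)$ uniformly in $j$. Arzel\`a--Ascoli then gives convergence in $C^{2,\beta}(K_m)$ for every $\beta<\alpha_{K_m}$, since the compact embedding $C^{2,\alpha}\hookrightarrow C^{2,\beta}$ holds on compact sets. A diagonal subsequence converges on every $K_m$ to some $u\in C^2(E)$, which proves item (1).

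\emph{Equation and admissibility.} On each fixed $K$ the cutoff is inactive for large $j$, so $f_j=f$ there, and passing to the limit in $F(D^2u_j)=f$ gives $F(D^2u)=f$. Admissibility must be checked, since the limit of $\lambda(D^2u_j)\in\Gamma_k$ a priori lies only in $\overline\Gamma_k$. I would use Lemma~\ref{lem:cone-margin}: the uniform Hessian bound from Proposition~\ref{prop:annular-estimates}(2) together with $f\ge\inf f>0$ keeps $\lambda(D^2u_j)$ a fixed distance $\delta$ from $\partial\Gamma_k$. This margin passes to the limit, so $\lambda(D^2u)\in\Gamma_k$. Since $S_{k,l}=F^{k-l}$ on $\Gamma_k$, the equation is equivalent to $S_{k,l}(D^2u)=g$, which gives item (2) apart from the boundary condition.

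\emph{Tail bound.} Item (3) follows pointwise from Proposition~\ref{prop:annular-estimates}(3). For fixed $x$ on the tail, $\chi_j(x)=1$ once $\theta_0R_j>\rho(x)$. The right-hand side of the bound does not depend on $j$, so letting $j\to\infty$ gives \eqref{eq:tail-value-bound}.

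\emph{Boundary continuity (expected main difficulty).} Interior convergence gives no information at $\partial\Omega$. I would use the $j$-independent barriers from Lemma~\ref{lem:uniform-value-collar}: on $\{\rho\le S_1\}$ we have $\underline u_\partial\le u_j\le\overline v$. Both $\underline u_\partial$ and $\overline v$ are continuous up to $\partial\Omega$ with trace $\phi$, and these inequalities survive the limit. Hence $u$ extends continuously to $\partial\Omega$ with $u=\phi$ there, so $u\in C^0(E\cup\partial\Omega)$. Equivalently, Lemma~\ref{lem:fixed-collar-gradient} gives a uniform Lipschitz bound on a fixed collar, which yields equicontinuity up to $\partial\Omega$ and convergence in $C^0$ of the collar. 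Regularity beyond continuity at the boundary is deferred to the later boundary-regularity step.
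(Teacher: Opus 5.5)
Your proposal is correct and matches the paper's proof in every step: diagonal Arzel\`a--Ascoli from the local $C^{2,\alpha_K}$ bounds, passage to the limit in the equation with Lemma~\ref{lem:cone-margin} supplying admissibility, and the pointwise limit of the $j$-independent tail comparison estimate. The only difference is at $\partial\Omega$, where the paper uses the equi-Lipschitz collar bound of Lemma~\ref{lem:fixed-collar-gradient} and a further subsequence (your stated alternative); your squeeze $\underline u_\partial\le u\le\overline v$ from Lemma~\ref{lem:uniform-value-collar} is equally valid and avoids the extra extraction, though it yields only continuity rather than a Lipschitz bound up to the boundary.
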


\begin{proof}
        Choose a compact exhaustion
        $K_1\Subset K_2\Subset\cdots\Subset E$. On each $K_i$,
        Lemma~\ref{lem:local-ek} gives a uniform $C^{2,\alpha_i}$ bound for all
        sufficiently large $j$. Successive extraction and a diagonal
        Arzel\`a--Ascoli argument produce a subsequence such that
        \[
                u_j\to u\quad\hbox{in }C^{2,\beta_i}(K_i),
                \qquad 0<\beta_i<\alpha_i.
        \]
        This defines a function $u\in C^2(E)$.

        Fix $x\in E$. For all sufficiently large $j$, the point lies in the untruncated region and
        $f_j(x)=f(x)$. The equation can be written in the original variables as
        \[
                S_k(D^2u_j(x))-g(x)S_l(D^2u_j(x))=0,
        \]
        which passes to the limit under local $C^2$ convergence. On each
        compact set, the Hessians are bounded and $F(D^2u_j)=f$ has a positive lower
        bound. Lemma~\ref{lem:cone-margin} therefore keeps them a uniform distance
        from the cone boundary. Hence $\lambda(D^2u)\in\Gamma_k$, and the limiting identity is equivalent to
        $S_{k,l}(D^2u)=g$, or, after taking the homogeneous root,
        $F(D^2u)=f$.

        On a fixed closed collar, Lemma~\ref{lem:fixed-collar-gradient} makes the
        sequence equibounded and equi-Lipschitz. After a further subsequence it
        converges uniformly on the collar closure. The limit agrees with the interior
        limit and equals $\phi$ on $\partial\Omega$, because every $u_j$ has that
        trace. Thus $u$ extends continuously to the inner boundary $\partial\Omega$.

        Finally, a fixed far point belongs to the untruncated region for all
        sufficiently large $j$. The uniform comparison estimate gives
        \[
                |u_j(x)-P_c(x)|
                \le C(c)+CH_m(\rho(x))+C|\Psi(\rho(x))|+CT(\rho(x)).
        \]
        Passing to the limit proves \eqref{eq:tail-value-bound}.
\end{proof}

The value estimate \eqref{eq:tail-value-bound} is subquadratic. We next use a blow-down argument to prove convergence of the Hessian.

\begin{lemma}\label{lem:subquadratic-gradient}
        Let $w=u-P_c$ and let $\Xi\ge0$. If
        \[
                |w(x)|\le\Xi(\rho(x)),\qquad
                \Xi(r)/r^2\to0,
        \]
        and $D^2w$ is bounded on a tail, then
        \[
                |Dw(x)|\le\omega_1(\rho(x))\,\rho(x),
                \qquad \omega_1(r)\to0.
        \]
\end{lemma}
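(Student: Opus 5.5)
This is a one-dimensional interpolation (Landau--Kolmogorov type) argument along segments: small sup norm plus bounded second derivatives force a small first derivative. Let $M:=\sup|D^2w|$ on the tail $\{\rho\ge r_1\}$. Since $D^2P_c=A$ is bounded, this is finite by hypothesis; on the tail of interest it is also controlled by Proposition~\ref{prop:annular-estimates} passed to the limit. Fix a point $x$ with $\rho(x)=r$ large, a unit vector $e$, and a step length $h\in(0,r/2]$, to be chosen depending on $r$. Because $\rho\asymp|\cdot|$, the segment $\{x+te:|t|\le h\}$ stays in $\{\rho\ge c_1r\}$ once $h\le c_2r$ for structural $c_1,c_2>0$. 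Taylor's formula with the Lagrange remainder gives
\[
        w(x+he)=w(x)+h\,Dw(x)\cdot e+\tfrac{h^2}{2}D^2w(\xi)[e,e],
\]
and hence
\[
        |Dw(x)\cdot e|\le \frac{2\sup_{\rho\ge c_1r}\Xi}{h}+\frac{Mh}{2}.
\]
Here $\sup_{\rho\ge c_1 r}\Xi$ should be read as the supremum of $\Xi$ over $\rho\in[c_1r,C r]$, which is all the segment sees.

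Set $\eta(r):=\sup_{s\ge c_1r}\Xi(s)/s^2$. This tends to zero, because $\Xi(r)/r^2\to0$, and it is nonincreasing. The segment only visits radii $s\le Cr$, so $\Xi\le C^2\eta(r)r^2$ there. Substituting,
\[
        |Dw(x)|\le C\Bigl(\frac{\eta(r)r^2}{h}+Mh\Bigr).
\]
Now choose $h=\min\{c_2,\eta(r)^{1/2}\}\,r$. Both terms are then of order $\eta(r)^{1/2}r$, up to the case $\eta(r)^{1/2}>c_2$, which only occurs for finitely small $r$. This yields
\[
        |Dw(x)|\le C(1+M)\,\eta(r)^{1/2}\,\rho(x),
\]
so we may take $\omega_1(r):=C(1+M)\eta(r)^{1/2}\to0$. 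Since $\eta$ is monotone, $\omega_1$ is monotone as well, which is convenient for the later blow-down argument.

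The only point needing care is geometric bookkeeping: the segments must remain inside the tail where $D^2w$ is bounded and away from $\overline\Omega$. This is why $h$ is capped by a fixed fraction of $r$ and $x$ is restricted to $\rho(x)\ge r_0$ with $r_0$ large. It also matters that $\Xi$ is used only through the monotone envelope $\eta$, so that no regularity of $\Xi$ is needed. Everything else is elementary.
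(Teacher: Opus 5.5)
Your proposal is correct and follows essentially the same route as the paper. Both arguments apply Taylor's formula along segments of length $h=\min\{\delta,\sqrt{\mu}\}\,\rho$, where $\mu$ is the monotone envelope $\sup_{t\ge R}\Xi(t)/t^2$ (the paper evaluates it at $\rho/2$), which balances the subquadratic value error against the bounded Hessian and gives $|Dw|/\rho\lesssim\sqrt{\mu}\to0$.
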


\begin{proof}
        A finite-difference estimate is applied at a scale that balances the
        subquadratic value error $\Xi(\rho)$ against the bounded Hessian remainder $\|D^2w\|_{L^\infty} < \infty$. Set
        \[
                \mu(R):=\sup_{t\ge R}\frac{\Xi(t)}{t^2}\longrightarrow0.
        \]
        For a far point $y$, write $s=\rho(y)$ and choose
        \[
                \delta_s:=\min\{\delta,\sqrt{\mu(s/2)}\},
                \qquad h:=\delta_ss,
        \]
        where $\delta>0$ is fixed so that $y\pm h\xi$ stay in a comparable affine
        annulus for every unit vector $\xi$. The case $\mu(s/2)=0$ is immediate;
        otherwise Taylor's formula and the tail Hessian bound $\|D^2w\|_{L^\infty} < \infty$ give
        \[
                w(y+h\xi)=w(y)+hDw(y)\cdot\xi+O(h^2).
        \]
        Both value terms are $O(\mu(s/2)s^2)$. After division by the chosen scale
        $h$, the two competing errors become
        \[
                \frac{|Dw(y)\cdot\xi|}{s}
                \le C\frac{\mu(s/2)}{\delta_s}+C\delta_s\longrightarrow0.
        \]
        Taking the supremum over $\xi$ proves the claim.
\end{proof}

\begin{lemma}\label{lem:dyadic-holder-source}
        Let $f\in C^1$ on a tail $\{\rho\ge R_0\}$ and suppose that, for some
        nonnegative function $m$,
        \[
                |f(x)-1|\le m(\rho(x)),
                \qquad \widehat m:=m+\mathcal A m,
                \qquad
                \int_{R_0}^{\infty}r\widehat m(r)^2\,dr<\infty.
        \]
        Assume also that $Df$ is bounded on the tail. Then, after increasing $R_0$
        if necessary, for every $0<\sigma\le1/4$,
        \[
                \sup_{R\ge2R_0}R^\sigma
                [f]_{C^\sigma(\{R/2\le\rho\le2R\})}<\infty.
        \]
\end{lemma}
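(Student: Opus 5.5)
The plan is to interpolate between the Lipschitz bound $\|Df\|_{L^\infty}\le L$ and a smallness bound for the oscillation of $f$ on dyadic shells. On the shell $A_R:=\{R/2\le\rho\le2R\}$, and for $x,y\in A_R$ with $|x-y|=d$, we have two estimates:
\[
        |f(x)-f(y)|\le Ld,
        \qquad
        |f(x)-f(y)|\le 2\sup_{A_R}m(\rho).
\]
Taking the minimum and using $\min\{a,b\}\le a^\sigma b^{1-\sigma}$ gives
\[
        [f]_{C^\sigma(A_R)}\le 2L^\sigma\Bigl(\sup_{[R/2,2R]}m\Bigr)^{1-\sigma}.
\]
It therefore suffices to show that, for $\sigma\le 1/4$,
\[
        R^{\sigma}\Bigl(\sup_{[R/2,2R]}m\Bigr)^{1-\sigma}
\]
is bounded.

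The difficulty is that the hypothesis gives only integrated control of $m$, not a pointwise rate. To get around this, I will use the Lipschitz bound a second time, now on $m$ itself, or more precisely on $|f-1|$. I will apply it to $w:=f-1$ directly rather than to $m$, since $m$ is only an envelope.

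Suppose $|w(x_0)|=\eta$ at some $x_0\in A_R$. Then $|w|\ge\eta/2$ on the ball of radius $\eta/(2L)$ about $x_0$. Consequently $m(\rho)\ge\eta/2$ on the $\rho$-interval of comparable length $c\eta/L$ around $\rho(x_0)$, because $|w(x)|\le m(\rho(x))$. Integrating,
\[
        \int_{R/4}^{4R} r\,\widehat m(r)^2\,dr
        \ge \int r\,m^2\,dr
        \ge cR\cdot\frac{\eta}{L}\cdot\frac{\eta^2}{4}.
\]
The left-hand side is bounded by the finite total integral $I$, so
\[
        \eta^3\le \frac{CLI}{R},
        \qquad\text{that is,}\qquad
        \sup_{A_R}|f-1|\le C R^{-1/3}.
\]
I need the interval to stay inside the tail, which holds after increasing $R_0$ because $\eta\le\sup|w|$ is bounded and $\eta/L\ll R$ for large $R$.

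I will then redo the interpolation with $\sup_{A_R}|f-1|$ in place of $\sup m$. The oscillation bound becomes $|f(x)-f(y)|\le 2\sup_{A_R}|f-1|\le CR^{-1/3}$, which yields
\[
        R^\sigma[f]_{C^\sigma(A_R)}\le C R^{\sigma-(1-\sigma)/3}.
\]
The exponent $\sigma-(1-\sigma)/3=(4\sigma-1)/3$ is $\le0$ exactly when $\sigma\le1/4$. This matches the threshold in the statement and confirms the plan. Only shells with $R/4\ge R_0$ are involved, which is why the supremum runs over $R\ge 2R_0$ after $R_0$ has been enlarged.

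The main obstacle is the step converting the integral condition into the pointwise decay $R^{-1/3}$. It requires care that the ball around $x_0$ projects onto a $\rho$-interval of length comparable to $\eta/L$. Since $\rho$ is a norm equivalent to $|\cdot|$, its gradient is bounded above and below on the tail, so moving along $\nabla\rho$ changes $\rho$ by a comparable amount. The positivity $m\ge\eta/2$ then holds at each $\rho$ in that interval, because a single point of the sphere $\{\rho=\text{const}\}$ inside the ball suffices.
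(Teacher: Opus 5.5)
Your proposal is correct and follows essentially the same route as the paper: Lipschitz persistence of $|f-1|$ on a $\rho$-interval of length $\asymp\eta/L$, combined with the integral bound, gives $|f-1|\le C\rho^{-1/3}$ (the paper uses the ray $t\mapsto tv$ with $\rho(v)=1$ where you use a ball), and interpolation with the Lipschitz bound then gives the exponent $(4\sigma-1)/3\le0$. The one point to tighten is your claim that $\sup|w|$ is bounded, which is not a hypothesis of the lemma; the paper obtains it by first running the same persistence argument with the fixed threshold $1$, and you can equivalently cap the ball radius at $\min\{\eta,1\}/(2L)$.
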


\begin{proof}
        The integrability condition $\int_{R_0}^\infty r \widehat m(r)^2\,dr<\infty$ first implies $f\to1$; interpolation
        with the ordinary Lipschitz bound $\|Df\|_{L^\infty} < \infty$ then gives the scale-invariant H\"older
        modulus. Put $h:=f-1$, $L_f:=\|Dh\|_{L^\infty}$, and
        \[
                \mathfrak M:=\int_{R_0}^{\infty}t\widehat m(t)^2\,dt<\infty.
        \]
        The case $L_f=0$ is immediate. The same Lipschitz-persistence argument with a
        fixed threshold first shows that $|h|\le1$ on a sufficiently far tail.
        Now fix a point $x$ on that tail, set $r:=\rho(x)$ and $\zeta:=|h(x)|$, and write
        $x=rv$ with $\rho(v)=1$. Since the affine unit sphere is compact, $|v|$ is
        uniformly bounded. On an interval $I_r$ centered at $r$ and of length
        comparable to $\zeta/L_f$, the Lipschitz estimate gives $|h(tv)|\ge \zeta/2$.
        After increasing $R_0$, this interval lies in $[r/2,2r]$. Since
        $|h(tv)|\le m(t)$ there,
        \[
                \mathfrak M
                \ge \int_{I_r}t m(t)^2\,dt
                \ge c\,r \zeta^3/L_f.
        \]
        Consequently,
        \[
                |f(x)-1|\le Cr^{-1/3}.
        \]
        For $x,y$ in the same affine annulus $\{R/2\le\rho\le2R\}$, we therefore
        have both
        \[
                |f(x)-f(y)|\le CR^{-1/3},
                \qquad
                |f(x)-f(y)|\le L|x-y|.
        \]
        Using $\min(A,Bt)\le A^{1-\sigma}B^\sigma t^\sigma$ gives
        \[
                R^\sigma\frac{|f(x)-f(y)|}{|x-y|^\sigma}
                \le C R^{\sigma-(1-\sigma)/3}.
        \]
        The exponent is nonpositive for $0<\sigma\le1/4$, which proves the claim.
\end{proof}

\begin{proposition}\label{prop:hessian-tail}
        There is a nonnegative function $\omega_2(r)\to0$ such that
        \[
                |D^2u(x)-A|
                \le\omega_2(|x|_A)
        \]
        for all sufficiently large $|x|_A$. In particular,
        \[
                D^2u(x)\longrightarrow A
                \qquad\text{as }|x|\to\infty.
        \]
\end{proposition}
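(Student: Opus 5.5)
We will prove Proposition~\ref{prop:hessian-tail} by a blow-down argument at dyadic scales, combined with a Liouville-type rigidity for the limit. Set $w:=u-P_c$. By \eqref{eq:tail-value-bound} we have $|w|\le\Xi(\rho)$ with $\Xi:=C(c)+CH_m+C|\Psi|+CT$. Since $H_m(r)=o(r)$ (proved in Lemma~\ref{lem:far-field-barriers}) and $\Psi,T\to0$, this gives $\Xi(r)=o(r)$, and in particular $\Xi(r)/r^2\to0$. Proposition~\ref{prop:annular-estimates}(2) passes to the limit and shows that $D^2u$, hence $D^2w$, is bounded on a tail. Lemma~\ref{lem:subquadratic-gradient} then yields $|Dw|\le\omega_1(\rho)\rho$.

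Argue by contradiction. Suppose there are points $x_i$ with $R_i:=\rho(x_i)\to\infty$ and $|D^2u(x_i)-A|\ge\varepsilon_0$. Rescale on the fixed annulus $\mathcal K:=\{1/4\le\rho(y)\le4\}$ by setting
\[
        v_i(y):=R_i^{-2}u(R_iy)-\tfrac12y^TAy-R_i^{-1}b\cdot y-R_i^{-2}c=R_i^{-2}w(R_iy).
\]
Then $D^2_yv_i(y)=D^2u(R_iy)-A$, and $F(A+D^2v_i)=f_i(y):=f(R_iy)$. The bounds above give $\|v_i\|_{L^\infty(\mathcal K)}\to0$ and $\|Dv_i\|_{L^\infty(\mathcal K)}\to0$, while $\|D^2v_i\|_{L^\infty}$ stays bounded.

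Lemma~\ref{lem:dyadic-holder-source} is exactly what supplies the scale-invariant control of the source. Its hypotheses hold with the envelope $m$ built from $(\mathrm H_A)$, since $|f-1|\le|f_0-1|+\varepsilon\le m$ and $f\in C^1$ has bounded gradient. It gives $[f_i]_{C^\sigma(\mathcal K)}\le CR_i^{\sigma}[f]_{C^\sigma(\{R_i/4\le\rho\le4R_i\})}\le C$, together with $\sup_{\mathcal K}|f_i-1|\to0$; indeed its proof shows $|f-1|\le C\rho^{-1/3}$. Next, the bounded Hessian range, the lower bound $f_i\ge c_0$ and Lemma~\ref{lem:cone-margin} keep $\lambda(A+D^2v_i)$ in a fixed compact subset of $\Gamma_k$, so $F$ is uniformly elliptic and concave there. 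The interior Evans--Krylov estimate with H\"older right-hand side (Caffarelli's $C^{2,\alpha}$ theory for concave equations) then bounds $\|v_i\|_{C^{2,\alpha}(\mathcal K')}$ on $\mathcal K':=\{1/2\le\rho\le2\}$, uniformly in $i$. By Arzel\`a--Ascoli, after passing to a subsequence, $v_i\to v_\infty$ in $C^2(\mathcal K')$. But $v_i\to0$ uniformly, so $v_\infty\equiv0$, and therefore $D^2v_i(y_i)\to0$ at $y_i:=x_i/R_i\in\{\rho=1\}$. This contradicts $|D^2u(x_i)-A|\ge\varepsilon_0$. Taking $\omega_2(r):=\sup_{\rho(x)\ge r}|D^2u(x)-A|$ gives the claimed modulus, and $|x|_A=\rho(x)$.

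The main obstacle is the uniform $C^{2,\alpha}$ bound on the rescaled annulus. A mere $C^{1,1}$ bound only gives weak-$*$ convergence of $D^2v_i$, which is not enough to evaluate the Hessian at the points $y_i$. Moreover, $f$ is only assumed $C^2$ globally with no decay of its derivatives, so under blow-down the Lipschitz seminorm of $f_i$ can grow like $R_i$. This is exactly why we go through the H\"older seminorm of order $\sigma\le1/4$ from Lemma~\ref{lem:dyadic-holder-source}, which is invariant at the right scale. We expect the key check to be that the Evans--Krylov--Caffarelli constant depends only on the ellipticity range, $\sup|v_i|$ and $[f_i]_{C^\sigma}$, and not on $\|Df_i\|$.
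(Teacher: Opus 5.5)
Your proposal is correct and follows essentially the paper's blow-down argument: a contradiction sequence, rescaled sources controlled by Lemma~\ref{lem:dyadic-holder-source}, uniform ellipticity from Lemma~\ref{lem:cone-margin}, a common Evans--Krylov H\"older modulus for the rescaled Hessians, and uniform convergence of the rescaled functions to zero, which forces $D^2u(x_i)\to A$. The differences are cosmetic. You rescale about the origin on a fixed annulus, so unlike the paper's re-centered $V_j$ you need neither the affine subtraction nor Lemma~\ref{lem:subquadratic-gradient}. You also conclude via Arzel\`a--Ascoli rather than the paper's explicit second-difference computation.
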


\begin{proof}
        The global Hessian bound $\|D^2u\|_{L^\infty} < \infty$ passes to the limit. Together with the positive lower bound for $f$, Lemma~\ref{lem:cone-margin} places all far-tail Hessians in one compact matrix set $K_0\subset\operatorname{Sym}(n)$ whose eigenvalues lie in a compact subset of $\Gamma_k$. Let $L_\rho$ be a Euclidean Lipschitz constant for $\rho$ and choose $\theta>0$ with $L_\rho\theta\le1/8$. Then every sufficiently far ball $B(x,\theta\rho(x))$ lies in $E$.

        Suppose the conclusion fails. There are points $x_j$ such that
        \[
                r_j:=\rho(x_j)\to\infty,
                \qquad |D^2u(x_j)-A|\ge\varepsilon_0.
        \]
        Put $w=u-P_c$ and define on $B_\theta$
        \[
                V_j(z):=\frac{w(x_j+r_jz)-w(x_j)-Dw(x_j)\cdot r_jz}{r_j^2}.
        \]
        Then $D_z^2V_j(z)=D_x^2u(x_j+r_jz)-A$. Moreover,
        $\rho(x_j+r_jz)\in[r_j/2,2r_j]$ on this fixed ball. Hence
        \[
                F(A+D^2V_j(z))=f(x_j+r_jz),
        \]
        and the matrix arguments stay in $K_0$. The dyadic H\"older bound from Lemma~\ref{lem:dyadic-holder-source} gives a uniform H\"older norm for the rescaled sources, while the envelope in
        $(\mathrm H_A)$ gives uniform convergence to $F(A)=1$.

        Define
        \[
                \mu(R):=\sup_{t\ge R}
                \frac{C(c)+CH_m(t)+C|\Psi(t)|+CT(t)}{t^2}.
        \]
        Then $\mu(R)\to0$. Lemma~\ref{lem:subquadratic-gradient} gives
        $|Dw(x_j)|/r_j\to0$, and the value estimate \eqref{eq:tail-value-bound} gives, uniformly for
        $|z|\le\theta/2$,
        \[
                \frac{|w(x_j+r_jz)|+|w(x_j)|}{r_j^2}\longrightarrow0.
        \]
        Therefore
        \[
                V_j\longrightarrow0
                \quad\hbox{uniformly on }B_{\theta/2}.
        \]

        On the fixed compact matrix range, the equations are uniformly elliptic and
        concave. Evans--Krylov therefore gives a common H\"older modulus for $D^2V_j$
        near the origin. If $D^2V_j(0)$ did not tend to zero, there would be unit
        vectors $\xi_j$ and $\varepsilon_1>0$ such that
        $|\xi_j^TD^2V_j(0)\xi_j|\ge\varepsilon_1$. The common modulus would then give a
        fixed $\delta>0$ on which this directional second derivative has constant
        sign and magnitude at least $\varepsilon_1/2$. For $0<t<\delta$,
        \[
                V_j(t\xi_j)+V_j(-t\xi_j)-2V_j(0)
                =\int_{-t}^t(t-|s|)
                \xi_j^TD^2V_j(s\xi_j)\xi_j\,ds,
        \]
        whose absolute value is at least $\varepsilon_1t^2/2$. This contradicts the
        uniform convergence $V_j\to0$. Hence
        $D^2V_j(0)=D^2u(x_j)-A\to0$, contrary to the choice of $x_j$.

        It follows that
        \[
                \omega_2(R):=\sup_{|x|_A\ge R}|D^2u(x)-A|
        \]
        tends to zero. Hence the Hessian converges as asserted.
\end{proof}

\begin{proposition}[Identification of the asymptotic gradient]
        \label{prop:gradient-tail}
        The exterior solution constructed above satisfies
        \[
                Du(x)-Ax\longrightarrow b
                \qquad\text{as }|x|\to\infty.
        \]
\end{proposition}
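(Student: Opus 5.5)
We reduce to showing $Dw\to0$ for $w:=u-P_c$. Since $D(P_c)=Ax+b$, this is exactly $Du-Ax\to b$. Proposition~\ref{prop:hessian-tail} alone is not enough: $D^2w\to0$ together with $w=o(\rho)$ still allows $Dw$ to drift. The function $x_1\log\log r$ in the remark after \eqref{eq:g-tail-integrability} is such an example. So the plan is to peel off the explicit radial part of $w$ and then use the equation on the bounded remainder.

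\emph{Step 1: the radial correction has vanishing gradient.} Write $\Phi=[\mathcal H(f_0-1)](\rho)$, so that $|D\Phi|\le C\rho\,\mathcal A m(\rho)$. Split $\int_{R_0}^r=\int_{R_0}^{R}+\int_R^r$ and apply Cauchy--Schwarz to the second piece, exactly as in the proof that $H_m=o(r)$ in Lemma~\ref{lem:far-field-barriers}:
\[
r\,\mathcal Am(r)\le Cr^{1-n}R^{n}+C\Bigl(\int_R^r t\,\widehat m(t)^2\,dt\Bigr)^{1/2}.
\]
Letting first $r\to\infty$ and then $R\to\infty$ gives $D\Phi\to0$. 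In the same way $|D(T\circ\rho)|\le C\rho^{1-n}\int_{R_0}^\rho s^{n-1}\varepsilon(s)\,ds\le C\rho^{-1}\int_{R_0}^{\sqrt\rho}s\varepsilon\,ds+C\int_{\sqrt\rho}^\infty s\varepsilon\,ds\to0$, using the second condition in \eqref{eq:g-tail-integrability}. Likewise $D(\Psi\circ\rho)\to0$, since $|\Psi'(r)|\le C r^{1-n}\int_{R_0}^r s^{n-1}\widehat m^2$ and the same splitting applies.

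\emph{Step 2: the remainder is bounded and solves a linear equation.} The comparison bounds of Lemma~\ref{lem:uniform-value-collar} pass to the limit. Without yet collapsing them into \eqref{eq:tail-value-bound}, they give
\[
\Phi^-+C_H\Psi(\rho)-C\le w\le\Phi^++C.
\]
Hence $v:=w-\Phi$ is bounded on a tail. Moreover $v$ solves $a^{ij}v_{ij}=e$, where
\[
a^{ij}:=\int_0^1F^{ij}(A+tD^2w)\,dt\to G^{ij}
\]
by Proposition~\ref{prop:hessian-tail}, and the source is
\[
e=(f-f_0(\rho))+(G-a):D^2\Phi,\qquad |e|\le\varepsilon(\rho)+\omega_2\,\widehat m(\rho).
\]
By Evans--Krylov at the scale $\rho$, as in the proof of Proposition~\ref{prop:hessian-tail}, the coefficients have a scale-invariant H\"older modulus on dyadic annuli.

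\emph{Step 3: absorb the source and show the oscillation decays.} I would compare $v$ on $\{R\le\rho\le R'\}$ with $\pm\bigl(T(\rho)+C\Psi(\rho)\bigr)$-type radial potentials built from $\varepsilon$ and from $\omega_2\widehat m\le \omega_2^2+\widehat m^2$. Their $\mathcal L$-images dominate $|e|$ after perturbing $G$ to $a^{ij}$, which costs only $o(1)$ relative errors. This reduces matters to an almost-$\mathcal L$-harmonic bounded function in $n\ge3$. An exterior Harnack/oscillation-decay argument, with $\rho^{2-n}$ barriers and the maximum principle on annuli, then shows that $v$ has a finite limit $c_\infty'$. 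Equivalently, $\operatorname{osc}_{\{r/2\le\rho\le2r\}}v\to0$.

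\emph{Step 4: gradient estimate.} Rescale $\{r/2\le\rho\le2r\}$ to unit size and apply the interior $W^{2,p}$ (or $C^{1,\alpha}$) estimate for $v-c_\infty'$ together with the radial potentials. This gives $r|Dv|\le C\operatorname{osc}v+o(r)$, hence $Dv\to0$. Combined with Step 1, $Dw=Dv+D\Phi\to0$.

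The main obstacle is Step 3. The source $e$ decays only in the integrated sense $\int r\varepsilon<\infty$, not pointwise like $o(r^{-2})$. So it must be absorbed through the explicit radial potentials $T$ and $\Psi$, whose gradients vanish by Step 1, rather than by a crude $r^2\sup|e|$ bound. I would first attempt this directly by comparison on annuli, choosing the additive constants $\pm\operatorname{osc}$ at the inner and outer faces.
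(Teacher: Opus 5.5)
Your Steps 1 and 2 are correct and match the paper's proof. $D\Phi\to0$ follows from the same Cauchy--Schwarz splitting, and the limiting comparison bounds make $v=u-P_c-\Phi$ bounded on a tail. The gap is in Steps 3--4, which are neither justified nor needed.

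\emph{Step 3.} Dominating $\omega_2\widehat m$ through $\omega_2^2+\widehat m^2$ requires the radial potential of $\omega_2^2$ to be finite, i.e.\ $\int^\infty r\,\omega_2(r)^2\,dr<\infty$. But Proposition~\ref{prop:hessian-tail} gives $\omega_2\to0$ with no rate. Nor does $(\mathrm H_A)$ give $\int^\infty r\,\widehat m\,dr<\infty$; for instance $\widehat m\sim(r\log r)^{-1}$ is allowed. So neither the existence of $\lim v$ nor the decay of its oscillation is established. The paper does not claim $\lim v$ exists under $(\mathrm H_A)$ alone.

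\emph{Step 4.} As written it needs $r^2\sup_{\{r/2\le\rho\le2r\}}|e|=o(r)$, i.e.\ essentially $r\varepsilon(r)\to0$ pointwise. The condition $\int r\varepsilon\,dr<\infty$ does not give this, and $L^p$-averaging at scale $r$ only yields $o(r^{2-2/p})$. Note also that Step 4 only uses boundedness of $\operatorname{osc}v$, so Step 3 is superfluous even within your own scheme.

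The missing observation is that Step 2 already finishes the proof. You have $v$ bounded, and
\[
D^2v=(D^2u-A)-D^2\Phi\to0
\]
by Proposition~\ref{prop:hessian-tail} together with $|D^2\Phi|\le C\widehat m(\rho)\to0$. The drift you worried about is possible for $w$ because $w$ is only $o(\rho)$; for a bounded function it cannot happen.

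Concretely, put $M=\sup|v|$. For a unit vector $\xi$, Taylor's formula on $B(y,h)$ gives
\[
|Dv(y)\cdot\xi|\le \frac{M}{h}+\frac h2\sup_{B(y,h)}|D^2v|.
\]
Let $\eta:=\sup_{\rho\ge\rho(y)/2}|D^2v|$ and take $h=\min\{\eta^{-1/2},c\rho(y)\}$, with $c$ small enough that $B(y,h)\subset\{\rho\ge\rho(y)/2\}$. This gives
\[
|Dv(y)|\le M\max\{\eta^{1/2},(c\rho(y))^{-1}\}+\tfrac12\eta^{1/2}\to0.
\]
This intermediate-scale interpolation is exactly the paper's argument. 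It needs no linear equation for $v$, no barriers, and no information on $\lim v$. Interpolating at scale $\rho$, as in your Step 4, would instead require $\rho|D^2v|\to0$, which is not available.
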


\begin{proof}
        Set
        \[
                \Phi=[\mathcal H(f_0-1)]\circ\rho,
                \qquad v:=u-P_c-\Phi.
        \]
        Passing the comparison estimate to the exterior limit shows that $v$ is bounded on a
        far tail. The radial identities give
        \[
                |D\Phi|\le C\rho\,\mathcal A m(\rho),
                \qquad |D^2\Phi|\le C\widehat m(\rho).
        \]
        By Cauchy--Schwarz and the first integrability condition in $(\mathrm H_A)$,
        for fixed $R$ and $r>R$,
        \[
                r\mathcal A m(r)
                \le o_{r\to\infty}(1)
                +C\left(\int_R^\infty s\widehat m(s)^2\,ds\right)^{1/2};
        \]
        hence $D\Phi\to0$, and also $D^2\Phi\to0$. Together with
        Proposition~\ref{prop:hessian-tail}, this gives $D^2v\to0$.

        Finally, boundedness of $v$ and the elementary finite-difference estimate
        \[
                |Dv(y)|\le \frac{C}{h}
                +Ch\sup_{B(y,h)}|D^2v|
        \]
        imply $Dv(y)\to0$ by choosing $h=h(|y|)\to\infty$ with $h=o(|y|)$ and
        $h\sup_{\rho\ge \rho(y)/2}|D^2v|\to0$. Since
        $Du-Ax-b=Dv+D\Phi$, the result follows.
\end{proof}

\subsection{Regularity at the fixed boundary}

We now recover the regularity of the exterior limit at the fixed boundary. The first step is to extend $F$ from the compact Hessian range of the solution to a globally uniformly elliptic concave operator.

\begin{lemma}\label{lem:bellman-extension}
        Let $\mathcal K\subset\operatorname{Sym}(n)$ be compact, with
        $\lambda(N)\in\Gamma_k$ for every $N\in\mathcal K$, and set
        \[
                F_B(M)=\inf_{N\in\mathcal K}
                \{F(N)+DF(N)[M-N]\},
                \qquad M\in\operatorname{Sym}(n).
        \]
        Then $F_B$ is globally defined, concave, and uniformly elliptic. Moreover,
        $F_B=F$ on $\mathcal K$.
\end{lemma}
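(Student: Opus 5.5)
The plan is to verify the four properties of $F_B$ one at a time, using only the concavity and smoothness of $F$ on a neighborhood of $\mathcal K$ in $\Gamma_k$ together with compactness of $\mathcal K$.

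\emph{Finiteness and global definition.} For each $N\in\mathcal K$, the map $M\mapsto F(N)+DF(N)[M-N]$ is affine. Since $F$ is $C^1$ on the open set of matrices with eigenvalues in $\Gamma_k$ and $\mathcal K$ is compact, both $F(N)$ and $\|DF(N)\|$ are uniformly bounded on $\mathcal K$. Hence for fixed $M$ the infimum is taken over a bounded family and is finite. This makes $F_B$ a real-valued function on all of $\operatorname{Sym}(n)$. Moreover, $F_B$ is globally Lipschitz with constant $\sup_{\mathcal K}\|DF\|$.

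\emph{Concavity.} $F_B$ is an infimum of affine functions, hence concave.

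\emph{Agreement with $F$ on $\mathcal K$.} Take $M\in\mathcal K$. Choosing $N=M$ gives $F_B(M)\le F(M)$. For the reverse inequality, use concavity of $F$ on the convex set $\{\lambda\in\Gamma_k\}$ of admissible matrices. Both $M$ and $N$ lie in this convex cone, so the segment joining them does as well, and the tangent plane at $N$ lies above the graph: $F(M)\le F(N)+DF(N)[M-N]$. Taking the infimum over $N$ gives $F(M)\le F_B(M)$. This is the step that requires care, since the argument needs $M$ and $N$ in a common convex set on which $F$ is concave and differentiable. The convexity of $\Gamma_k$, together with the concavity of $F$ there recorded in Section~\ref{sec:construction}, supplies exactly this.

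\emph{Uniform ellipticity.} Let $P\ge0$. For each affine piece,
\[
(F(N)+DF(N)[M+P-N])-(F(N)+DF(N)[M-N])=DF(N)[P].
\]
Write $DF(N)[P]=\operatorname{tr}(F^{ij}(N)P)$. The matrix $(F^{ij}(N))$ is positive definite at every admissible $N$ by ellipticity of $F$ on $\Gamma_k$, and it is continuous in $N$. On the compact set $\mathcal K$ its eigenvalues therefore lie in a fixed interval $[\lambda_0,\Lambda_0]$ with $\lambda_0>0$. Hence
\[
\lambda_0\operatorname{tr}P\le DF(N)[P]\le\Lambda_0\operatorname{tr}P
\]
for every $N\in\mathcal K$. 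Taking infima, and using that an infimum of functions that each increase by an amount in $[\lambda_0\operatorname{tr}P,\Lambda_0\operatorname{tr}P]$ also increases by an amount in that range, we obtain
\[
\lambda_0\operatorname{tr}P\le F_B(M+P)-F_B(M)\le\Lambda_0\operatorname{tr}P .
\]
This is uniform ellipticity, with constants depending only on $\mathcal K$.
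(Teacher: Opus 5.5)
Your proposal is correct and follows the paper's argument: tangent planes of the concave $F$ lie above its graph on the convex admissible set, so $F_B\ge F$ on $\mathcal K$; the choice $N=M$ gives the reverse inequality; and the uniform bounds on $DF(N)$ over the compact set $\mathcal K$ pass to the infimum to give uniform ellipticity. You also spell out the finiteness of $F_B$ and its concavity as an infimum of affine functions, which the paper leaves implicit.
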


\begin{proof}
        Every tangent plane lies above the graph of the concave function $F$.
        For $M\in\mathcal K$, choosing the tangent plane based at $N=M$ gives the
        opposite inequality. Since $DF(N)$ is bounded between two positive multiples
        of the identity on $\mathcal K$, the infimum has the same uniform ellipticity
        bounds.
\end{proof}

\begin{proposition}[Regularity at boundary]
        \label{prop:post-limit-regularity}
        Let $u$ be the exterior limit. Suppose $q\ge4$, $0<\alpha_0<1$,
        $\partial\Omega$ and $\phi$ are $C^{q,\alpha_0}$, and $g$ is
        $C^{q-2,\alpha_0}$ on a fixed exterior collar. Then, on a possibly smaller
        fixed collar,
        \[
                u\in C^{q,\alpha_0}.
        \]
        If the data are smooth, then
        $u\in C^\infty_{\mathrm{loc}}(\overline E)$.
\end{proposition}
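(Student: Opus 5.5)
The strategy is to get a uniform $C^{2,\alpha}$ bound up to $\partial\Omega$ for the annular solutions $u_j$ on a fixed collar, pass it to the limit, and then bootstrap by linear Schauder theory. Three facts already proved feed into this. First, Lemma~\ref{lem:fixed-boundary-hessian} and Lemma~\ref{lem:global-hessian} give $|D^2u_j|\le C(c)$ on a fixed collar $\mathcal C:=\{0\le d_\Omega\le\delta'\}$, uniformly in $j$. Second, $f_j=f$ there for large $j$, and $\inf f>0$. Third, Lemma~\ref{lem:cone-margin} then places $\lambda(D^2u_j)$ in a fixed compact subset of $\Gamma_k$.

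We therefore take $\mathcal K$ to be the compact set of symmetric matrices with $|M|\le C(c)$ and $F(M)\ge\inf f$, with eigenvalues in $\Gamma_k$ (this set is closed by Lemma~\ref{lem:cone-margin}). Lemma~\ref{lem:bellman-extension} produces $F_B$, which is concave, uniformly elliptic and globally defined. Each $u_j$ solves $F_B(D^2u_j)=f$ in $\mathcal C$ with $u_j=\phi$ on $\partial\Omega$.

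Next we apply the boundary Evans--Krylov--Safonov estimate of Krylov for concave uniformly elliptic equations with $C^{2,\alpha}$ boundary data on a $C^{2,\alpha}$ boundary portion. Caffarelli's perturbative boundary version would also serve, since it needs $f\in C^{\alpha}$, which holds because $g\in C^{q-2,\alpha_0}$ with $q\ge4$. This gives $\|u_j\|_{C^{2,\bar\alpha}(\{d_\Omega\le\delta'/2\})}\le C$ for some $\bar\alpha\in(0,1)$ independent of $j$. The $C^0$ and $C^1$ inputs come from Lemma~\ref{lem:fixed-collar-gradient}. Arzel\`a--Ascoli and the interior convergence of Proposition~\ref{prop:compactness} then show $u\in C^{2,\bar\alpha}$ up to $\partial\Omega$ on the smaller collar. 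The limit still satisfies $F(D^2u)=f$, with Hessian in $\mathcal K$, so it is $k$-admissible up to the boundary.

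It remains to raise the exponent to $\alpha_0$ and the order to $q$; this is the step I expect to need most care. Here $\bar\alpha$ may be smaller than $\alpha_0$, and $F$ is only smooth rather than linear. The plan is to differentiate the equation in a tangential direction. Locally flatten $\partial\Omega$ with a $C^{q,\alpha_0}$ diffeomorphism, and take difference quotients in tangential directions rather than derivatives, since initially only $u\in C^{2,\bar\alpha}$. A tangential derivative $w=\partial_Tu$ solves the linear equation $F^{pq}(D^2u)w_{pq}=\partial_Tf+(\text{lower-order terms from flattening})$. Its coefficients are $C^{\bar\alpha}$ and its boundary data $\partial_T\phi\in C^{q-1,\alpha_0}$.

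Linear boundary Schauder theory gives $w\in C^{2,\bar\alpha}$. The normal--normal third derivatives are then recovered from the equation, using $F^{nn}\ge c>0$: solve for $u_{nn}$ as a $C^{1,\bar\alpha}$ function of the other second derivatives. This yields $u\in C^{3,\bar\alpha}$, so the coefficients $F^{pq}(D^2u)$ are now Lipschitz, in particular $C^{\alpha_0}$. Rerunning the same argument with exponent $\alpha_0$ gives $u\in C^{3,\alpha_0}$.

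We then iterate: if $u\in C^{m,\alpha_0}$ with $3\le m<q$, the coefficients lie in $C^{m-2,\alpha_0}$ and the right-hand side in $C^{m-2,\alpha_0}$ (since $m-1\le q-2$). Tangential differentiation plus the equation for the normal direction give $u\in C^{m+1,\alpha_0}$, stopping at $m=q$ where the regularity of $g$ and $\phi$ runs out. For smooth data the iteration never terminates, giving $C^\infty$ up to $\partial\Omega$. Combined with interior smoothness from elliptic regularity, this gives $u\in C^\infty_{\mathrm{loc}}(\overline E)$.
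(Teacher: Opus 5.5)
Your proposal is correct and follows essentially the paper's route: a Bellman extension of $F$ off the compact admissible Hessian range, a boundary $C^{2,\alpha}$ estimate for concave uniformly elliptic equations, and then a bootstrap. The bootstrap uses tangential difference quotients with boundary Schauder estimates, recovers the normal--normal entry from the equation, upgrades $C^{3,\bar\alpha}$ to $C^{3,\alpha_0}$, and inducts up to $C^{q,\alpha_0}$. The only difference is that you prove uniform up-to-the-boundary $C^{2,\bar\alpha}$ bounds for the smooth annular solutions $u_j$ (via Krylov) and pass to the limit, whereas the paper applies the Silvestre--Sirakov boundary theorem directly to the limit $u$ on a fixed collar, controlling the artificial face by interior Evans--Krylov.
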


\begin{proof}
        Since $g$ stays in a fixed positive range, $f=g^{1/(k-l)}$ has the same local
        $C^{q-2,\alpha_0}$ regularity. In a fixed open collar of $\partial\Omega$, the
        Hessian range of the exterior limit lies in a compact matrix set
        $\mathcal K_0\subset\operatorname{Sym}(n)$ whose eigenvalues lie in a compact
        subset of $\Gamma_k$. Lemma~\ref{lem:bellman-extension} therefore
        extends $F$ to a globally uniformly elliptic concave operator agreeing with
        $F$ on this range. Passing to $v=-u$, the operator is convex. On a fixed smooth
        collar $\mathcal C_\delta$, the true boundary datum is $C^{2,\alpha_0}$ and
        the artificial boundary datum is $C^{2,\alpha_1}$ by interior
        Evans--Krylov. Hence \cite[Theorem~1.4(b)]{SilvestreSirakov2014Boundary}
        gives
        \[
                u\in C^{2,\alpha_b}(\overline{\mathcal C_\delta})
        \]
        for some $0<\alpha_b\le\alpha_0$.

        To recover the data exponent, work in flattened boundary coordinates;
        tangential difference quotients and the boundary Schauder estimate first give
        $C^{3,\alpha_b}$ regularity; the normal--normal derivative is recovered from the
        equation using uniform ellipticity. Thus $D^2u$ is Lipschitz, so the
        linearized coefficients are $C^{\alpha_0}$. Repeating the same boundary
        argument with exponent $\alpha_0$ gives $u\in C^{3,\alpha_0}$, and the
        standard differentiated Schauder induction yields
        \[
                u\in C^{q,\alpha_0}
        \]
        because $\partial\Omega,\phi\in C^{q,\alpha_0}$ and
        $g\in C^{q-2,\alpha_0}$. Smooth data give
        $u\in C^\infty_{\mathrm{loc}}(\overline E)$.
\end{proof}

\begin{proof}[Proof of Theorem~\ref{thm:main}]
        Proposition~\ref{prop:truncated-annulus-solvability} supplies solutions on a
        sequence of expanding annuli, and Proposition~\ref{prop:annular-estimates}
        gives estimates independent of the outer radii. Proposition~\ref{prop:compactness}
        yields a $k$-admissible exterior limit solving the original equation and taking
        the boundary value $\phi$. The envelope $m$ for $f$ introduced in Section~\ref{sec:construction} is
        comparable to the original envelope $m_g$ in $(\mathrm H_A)$. Hence the
        two integrability conditions in $(\mathrm H_A)$ give
        \[
                H_m(r)=o(r),\qquad T(r)\to0,\qquad \Psi(r)\to0.
        \]
        Since $\rho=|x|_A\asymp|x|$, the tail estimate gives
        \[
                u(x)-\frac12x^TAx-b\cdot x=o(|x|).
        \]
        Proposition~\ref{prop:hessian-tail} gives $D^2u\to A$, and
        Proposition~\ref{prop:gradient-tail} gives $Du-Ax\to b$, while
        Proposition~\ref{prop:post-limit-regularity} yields
        $u\in C^{q,\alpha}_{\mathrm{loc}}(\overline E)$.
\end{proof}

\subsection{Higher-order asymptotics at infinity}

This subsection proves Theorem~\ref{thm:asymptotic} and
Corollary~\ref{cor:polynomial-asymptotics}. For the decomposition in
Theorem~\ref{thm:asymptotic}, set
\[
        a(r):=g_0(r)-1,
        \qquad
        e(x):=g(x)-g_0(\rho(x)),
        \qquad \rho=|x|_A.
\]
Then, on a sufficiently far tail,
\begin{equation}\label{eq:section4-tail-decomposition}
        g(x)=1+a(\rho(x))+e(x),
\end{equation}
with
\begin{equation}\label{eq:section4-tail-bounds}
        |a^{(j)}(r)|\le Cr^{-\gamma-j},
        \qquad
        |D^je(x)|\le C|x|^{-\beta-j},
        \qquad 0\le j\le N.
\end{equation}
Here $\gamma>1$, $\beta>2$, and $1\le N\le q-2$; in the $A$-adapted radial case
$e\equiv0$ and $\beta=\infty$.

The proof proceeds in three steps. The tail comparison first bounds the remainder after subtracting the radial correction. Rescaling upgrades this value control to derivative bounds, while the constant-coefficient Newton potential converts source decay into decay of the solution. The quadratic Taylor remainder is then iterated to reach the final rate, after which the correction for $F(D^2u)=f$ is compared with $\Phi_A$.

Set $d:=N-1$. On each annulus of radius $R$, the pointwise bounds through order $N$ and the mean-value theorem give the scaled $C^{d,\alpha}$ bounds used below.

For $R>0$, set
\[
        I_R:=[R/2,2R],
        \qquad
        \mathfrak A_R:=\{x\in E:R/2<\rho(x)<2R\}.
\]
For a one-dimensional interval or a spatial domain $U$ of scale $R$, write
\[
        \|h\|_{C_R^{d,\alpha}(U)}
        :=\sum_{j=0}^d R^j\|D^jh\|_{L^\infty(U)}
        +R^{d+\alpha}[D^dh]_{C^\alpha(U)}.
\]

Starting from the decomposition of $g$ in
\eqref{eq:section4-tail-decomposition}, define the corresponding radial and remainder terms for $f$ by
\begin{equation}\label{eq:weighted-tail-decomposition}
        \widetilde a(r):=(1+a(r))^{1/(k-l)}-1,
        \qquad
        \widetilde e(x):=f(x)-1-\widetilde a(\rho(x)).
\end{equation}
The pointwise bounds \eqref{eq:section4-tail-bounds}, after the
fixed affine change of variables and the preceding mean-value estimate, imply the technical scaled estimates
\begin{equation}\label{eq:weighted-source-bounds}
        \sup_{R\ge R_1}\left(
        R^\gamma\|\widetilde a\|_{C_R^{d,\alpha}(I_R)}
        +R^\beta\|\widetilde e\|_{C_R^{d,\alpha}(\mathfrak A_R)}
        \right)<\infty.
\end{equation}
In the $A$-adapted radial case, $\widetilde e\equiv0$ and $\beta=\infty$. Taylor's
formula for the root map also gives
\begin{equation}\label{eq:radial-source-difference}
        \sup_{R\ge R_1}R^{2\gamma}
        \left\|\widetilde a-\frac{a}{k-l}\right\|_{C_R^{d,\alpha}(I_R)}<\infty.
\end{equation}
For the remainder of the proof, let
\[
        \Phi(x):=[\mathcal H \widetilde a](\rho(x)).
\]
This is the radial correction for $F(D^2u)=f$. Its relation to $\Phi_A$ is used at the end of the proof.

\begin{lemma}
        \label{lem:dyadic-derivative-upgrade}
        Let $w=u-P_c$ and suppose that, on a far tail, the matrices
        $A+D^2w$ remain in a fixed compact matrix set whose eigenvalues lie in a
        compact subset of $\Gamma_k$. Then there are
        $0<\theta<1$ and $C>0$ such that, with $r=\rho(x)$,
        \begin{equation}\label{eq:dyadic-derivative-upgrade}
                \begin{aligned}
                         & \sum_{j=2}^{d+2}r^j
                        \|D^jw\|_{L^\infty(B(x,\theta r/2))}
                        +r^{d+2+\alpha}
                                 [D^{d+2}w]_{C^\alpha(B(x,\theta r/2))} \\
                         & \qquad\le C\left(
                        \|w\|_{L^\infty(B(x,\theta r))}
                        +r^2\|f-1\|_{C_r^{d,\alpha}(B(x,\theta r))}
                        \right)
                \end{aligned}
        \end{equation}
        for every sufficiently far center $x$. The constants are independent of
        $x$.
\end{lemma}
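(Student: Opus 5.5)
We rescale to unit size and apply interior regularity for concave uniformly elliptic equations there. Fix a far center $x$ with $r=\rho(x)$, choose $\theta$ as in the proof of Proposition~\ref{prop:hessian-tail} so that $B(x,\theta r)\subset\{r/2\le\rho\le2r\}\subset E$, and define on $B_\theta$
\[
        W(z):=r^{-2}\,w(x+rz),\qquad \widetilde f(z):=f(x+rz).
\]
Then $F(A+D^2W)=\widetilde f$ in $B_\theta$, and $A+D^2W(z)=D^2u(x+rz)$ lies in the fixed compact matrix set $\mathcal K$ assumed in the statement. Scaling converts the two sides of \eqref{eq:dyadic-derivative-upgrade} exactly: $r^{j}\|D^jw\|=r^2\|D^jW\|$, the H\"older seminorm term becomes $r^2[D^{d+2}W]_{C^\alpha}$, and $\|f-1\|_{C_r^{d,\alpha}(B(x,\theta r))}=\|\widetilde f-1\|_{C^{d,\alpha}(B_\theta)}$. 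After dividing by $r^2$, it therefore suffices to prove
\[
        \|D^2W\|_{C^{d,\alpha}(B_{\theta/2})}\le C\bigl(\|W\|_{L^\infty(B_\theta)}+\|\widetilde f-1\|_{C^{d,\alpha}(B_\theta)}\bigr)
\]
with $C$ depending only on $\mathcal K$, $n,k,l,d,\alpha,\theta$.

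The key step is to linearize around $A$. Since $F(A)=1$,
\[
        \widetilde f-1=F(A+D^2W)-F(A)=a^{pq}(z)W_{pq},\qquad a^{pq}(z):=\int_0^1F^{pq}(A+tD^2W(z))\,dt .
\]
Convexity of $\Gamma_k$ keeps the segment in a compact subset of $\Gamma_k$ (take the convex hull of $\mathcal K\cup\{A\}$), so $a^{pq}$ is uniformly elliptic with bounds depending only on $\mathcal K$. The bound is then obtained by bootstrapping.

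\emph{Step 0.} Apply Evans--Krylov, via the extension of Lemma~\ref{lem:bellman-extension} to a globally uniformly elliptic concave operator, together with the Caffarelli $C^{2,\alpha}$ estimate for H\"older right-hand sides. This gives $[D^2W]_{C^{\bar\alpha}(B_{3\theta/4})}\le C$ for some $\bar\alpha>0$, where $C$ depends only on $\mathcal K$ and the $C^\alpha$ norm of $\widetilde f$. Hence $a^{pq}\in C^{\bar\alpha}$ uniformly.

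\emph{Step 1.} Interior Schauder for $a^{pq}W_{pq}=\widetilde f-1$ gives the linear bound
\[
        \|W\|_{C^{2,\bar\alpha}}\le C\bigl(\|W\|_{L^\infty}+\|\widetilde f-1\|_{C^{\bar\alpha}}\bigr).
\]

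\emph{Step 2.} Iterate by differentiating the equation. For each $m\le d$, $\partial^mW$ solves $F^{pq}(A+D^2W)\partial_{pq}\partial^mW=\partial^m\widetilde f-(\text{lower terms})$. The lower terms are products of derivatives of $F$ with $D^{3}W,\dots,D^{m+1}W$, and each contains at least one factor of higher derivatives of $W$. Nested Schauder estimates on shrinking balls $B_{\theta/2+2^{-i}\theta/4}$ then bound $\|D^{2+m}W\|_{C^\alpha}$.

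The main obstacle is keeping the estimate \emph{linear} in $\|W\|_{L^\infty}+\|\widetilde f-1\|$. The nonlinear lower terms are at least quadratic in the derivatives of $W$ of order $\ge2$, and we already have uniform a priori bounds for those factors from the previous stage. We therefore treat all but one factor as bounded coefficients, so each term is bounded by $C$ times a previously controlled quantity that is itself linear in $\|W\|_{L^\infty}+\|\widetilde f-1\|$. Keeping this bookkeeping of "one linear factor, remaining factors bounded" consistent across the induction requires care, and we handle it at each stage via interpolation inequalities on the nested balls. Finally, the constants depend only on the scale-free data, so rescaling back yields \eqref{eq:dyadic-derivative-upgrade} with $C$ independent of $x$.
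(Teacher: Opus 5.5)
Your proposal is correct and follows essentially the same route as the paper. You rescale to $W(z)=r^{-2}w(x+rz)$ on a fixed ball, use the compact Hessian range to get uniform ellipticity and concavity, apply Evans--Krylov for the $C^{2,\bar\alpha}$ bound, and then differentiate the equation and use interior Schauder estimates inductively before scaling back. Your mean-value linearization $a^{pq}=\int_0^1F^{pq}(A+tD^2W)\,dt$ and the ``one linear factor, others bounded'' bookkeeping make explicit the linear dependence on $\|W\|_{L^\infty}+\|\widetilde f-1\|_{C^{d,\alpha}}$ that the paper leaves implicit. Note only that the bounded factors, as your Step~0 concedes, depend on $\|\widetilde f\|_{C^{d,\alpha}(B_\theta)}$, so your claim that $C$ depends only on $\mathcal K,n,k,l,d,\alpha,\theta$ is inexact; this norm is uniform in $x$ here by \eqref{eq:weighted-source-bounds}.
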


\begin{proof}
        Rescale by
        \[
                W(y):=r^{-2}w(x+ry)
        \]
        on a fixed ball. The equation becomes
        \[
                F(A+D_y^2W)=f(x+ry),
        \]
        and the Hessian arguments remain in one compact matrix set whose eigenvalues
        lie in a compact subset of $\Gamma_k$.
        Thus the rescaled equations are uniformly elliptic and concave with fixed
        structural constants. Evans--Krylov gives the scaled $C^{2,\alpha}$
        estimate. Since $F$ is smooth on the compact Hessian range, differentiating
        the equation and applying the interior linear Schauder estimate inductively
        gives the $C^{d+2,\alpha}$ estimate. Scaling back proves
        \eqref{eq:dyadic-derivative-upgrade}.
\end{proof}

\begin{lemma}\label{lem:constant-coefficient-potential}
        Let $\mathcal L=\operatorname{tr}(GD^2)$ with $G>0$, let $s>2$, and let
        $v$ be bounded on an exterior domain. Suppose that on a far tail,
        $\mathcal Lv=h$ and
        \[
                \sup_{R\ge R_0}R^s
                \|h\|_{C_R^{d,\alpha}(\mathfrak A_R)}<\infty.
        \]
        Then there is a constant $v_\infty$ such that
        \[
                \sup_{R\ge R_1}
                \frac{\|v-v_\infty\|_{C_R^{d+2,\alpha}(\mathfrak A_R)}}{\mathcal P_s(R)}<\infty.
        \]
\end{lemma}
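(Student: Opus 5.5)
We reduce to the Laplacian by the affine change of variables $x=G^{1/2}y$. Then $\mathcal L$ becomes $\Delta_y$, the level sets of $\rho$ become Euclidean spheres, and the scaled norms $C_R^{d,\alpha}$ change only by fixed constants. So we may assume $G=I$, $\mathfrak A_R=\{R/2<|x|<2R\}$ on a tail $\{|x|>R_0\}$, and $|h(x)|\le C|x|^{-s}$ with the dyadic Hölder bounds on $h$.

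\emph{Step 1 (particular solution).} Fix a cutoff $\eta$ vanishing near $\{|x|\le R_0\}$ and equal to one for $|x|\ge2R_0$. Set
\[
N(x):=-c_n\int_{\mathbb R^n}|x-y|^{2-n}\eta(y)h(y)\,dy .
\]
This integral converges because $s>2$, and $\Delta N=\eta h$. We estimate $|N(x)|$ for $|x|=R$ by splitting the integral into three regions.
\begin{itemize}
\item Near region $|y-x|<R/4$: the contribution is $\le CR^{-s}R^2$.
\item Far region $|y|>2R$: the contribution is $\le C\int_{2R}^\infty t^{1-s}\,dt\le CR^{2-s}$.
\item Inner region $|y|<R/2$: the contribution is $\le CR^{2-n}\int_{R_0}^{R}t^{n-1-s}\,dt$.
\end{itemize}
The inner bound equals $R^{2-s}$, $R^{2-n}\log R$, or $R^{2-n}$ according as $s<n$, $s=n$, $s>n$. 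Hence $|N|\le C\mathcal P_s(R)$. For $s>n$ one can subtract the monopole, but since we only want order $R^{2-n}$ no subtraction is needed. Since $\mathcal P_s(R)\ge cR^{2-s}$ in all cases, the scaled interior Schauder estimate for $\Delta N=h$ on $\mathfrak A_R$ (on slightly enlarged annuli) gives
\[
\|N\|_{C_R^{d+2,\alpha}(\mathfrak A_R)}\le C\bigl(\|N\|_{L^\infty}+R^2\|h\|_{C^{d,\alpha}_R}\bigr)\le C\mathcal P_s(R).
\]

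\emph{Step 2 (harmonic remainder).} Set $H:=v-N$. This function is harmonic on a tail. Moreover $N\to0$ because $\mathcal P_s\to0$, and $v$ is bounded, so $H$ is bounded there. A bounded harmonic function on an exterior domain in $n\ge3$ has the expansion $H=v_\infty+a|x|^{2-n}+O(|x|^{1-n})$, by the Kelvin transform and removability of the isolated singularity. Hence $|H-v_\infty|\le CR^{2-n}\le C\mathcal P_s(R)$, since $\mathcal P_s(R)\ge R^{2-n}$ in every case. Harmonic derivative estimates on $\mathfrak A_R$ then give the scaled $C^{d+2,\alpha}$ bound by $CR^{2-n}$. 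Adding the two pieces proves the lemma.

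The main obstacle is the three-region bookkeeping of the Newton potential in Step 1, which is what produces the critical logarithm at $s=n$. It requires care only in the inner region; the remaining steps are standard.
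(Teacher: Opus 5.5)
Your argument is correct and follows the paper's proof essentially step for step: affine reduction to $\Delta$, a Newton potential of the cut-off source with a three-region value bound giving $\mathcal P_s$, and a bounded harmonic remainder handled by the Kelvin transform. The only real variation is that you get the scaled $C^{d+2,\alpha}$ bound on $N$ directly from interior Schauder for $\Delta N=h$ on enlarged annuli together with the sup bound, whereas the paper splits $\bar h$ into a local piece and a remote piece whose potential is harmonic near $\mathfrak A_R$; both work equally well. One small bookkeeping slip: your three regions miss $\{R/2\le|y|\le 2R\}\setminus B(x,R/4)$, but there $|x-y|\ge R/4$ and $|h|\le CR^{-s}$, so that piece contributes only $O(R^{2-s})$ and the conclusion is unaffected.
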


\begin{proof}
        A fixed affine change of variables reduces $\mathcal L$ to the Euclidean
        Laplacian. Since this change preserves dyadic annuli up to fixed constants,
        it is enough to prove the assertion for $\Delta$ and the Euclidean radius.
        Extend $h$ across a fixed ball to a function $\bar h$ whose compact part is
        smooth and whose tail obeys the same weighted $C^{d,\alpha}$ estimate. Define
        its Newton potential by
        \begin{equation}\label{eq:newton-potential-definition}
                \mathcal N\bar h(x)
                :=c_n\int_{\mathbb R^n}|x-y|^{2-n}\bar h(y)\,dy.
        \end{equation}
        The integral is absolutely convergent at infinity because
        \[
                \int_1^\infty t^{2-n}t^{-s}t^{n-1}\,dt
                =\int_1^\infty t^{1-s}\,dt<\infty.
        \]
        Near $y=x$ it is the ordinary Newton potential of a locally
        $C^{d,\alpha}$ function, and hence
        $\Delta\mathcal N\bar h=\bar h$.

        We first record the value estimate for $\mathcal L^{-1}h$. Let $r=|x|$ and split the integral in
        \eqref{eq:newton-potential-definition} into
        \[
                |y|<r/2,
                \qquad r/2<|y|<2r,
                \qquad |y|>2r.
        \]
        On the first region, $|x-y|\asymp r$, so
        \[
                \int_{|y|<r/2}|x-y|^{2-n}|\bar h(y)|\,dy
                \le Cr^{2-n}\left(1+\int_1^r t^{n-1-s}\,dt\right).
        \]
        This is bounded by $C\mathcal P_s(r)$, with the logarithm appearing exactly
        when $s=n$. Scaling $y=rz$ on the middle region gives the bound
        $Cr^{2-s}$, while on the last region
        \[
                \int_{|y|>2r}|x-y|^{2-n}|\bar h(y)|\,dy
                \le C\int_{2r}^\infty t^{1-s}\,dt
                \le Cr^{2-s}.
        \]
        Thus
        \begin{equation}\label{eq:newton-potential-value-bound}
                |\mathcal N\bar h(x)|\le C\mathcal P_s(|x|).
        \end{equation}

        The derivative estimate follows from the same decomposition without
        introducing a singular integral across the observation point. On an annulus
        $\mathfrak A_R$, decompose $\bar h=h_{\rm loc}+h_{\rm rem}$, where
        $h_{\rm loc}$ is supported in
        $\{R/4<|y|<4R\}$ and agrees with $\bar h$ on a neighborhood of
        $\mathfrak A_R$. After scaling to a fixed annulus, the standard interior
        estimate for the Newton potential gives
        \[
                \|\mathcal Nh_{\rm loc}\|_{C_R^{d+2,\alpha}(\mathfrak A_R)}
                \le CR^{2-s}.
        \]
        The function $\mathcal Nh_{\rm rem}$ is harmonic on a fixed enlargement of
        $\mathfrak A_R$. Its supremum there is bounded by the three-region estimate
        above, so the scale-invariant interior estimates for harmonic functions give
        \[
                \|\mathcal Nh_{\rm rem}\|_{C_R^{d+2,\alpha}(\mathfrak A_R)}
                \le C\mathcal P_s(R).
        \]
        Since $R^{2-s}\le C\mathcal P_s(R)$ for every $s>2$, we conclude that
        \begin{equation}\label{eq:newton-potential-scaled-bound}
                \|\mathcal N\bar h\|_{C_R^{d+2,\alpha}(\mathfrak A_R)}
                \le C\mathcal P_s(R).
        \end{equation}

        Now $q:=v-\mathcal N\bar h$ is harmonic on a far exterior region. It is
        bounded because $v$ is bounded and
        \eqref{eq:newton-potential-value-bound} tends to zero. The isolated-singularity
        expansion of the Kelvin transform, equivalently the spherical-harmonic
        expansion of a bounded exterior harmonic function, gives a constant
        $v_\infty$ such that
        \[
                q(x)=v_\infty+O(|x|^{2-n}).
        \]
        Scale-invariant harmonic estimates give the corresponding derivatives and
        H\"older seminorm. Combining this with
        \eqref{eq:newton-potential-scaled-bound}, and using
        $R^{2-n}\le C\mathcal P_s(R)$, proves the claim. Reversing the affine change
        of variables completes the proof for $\mathcal L$.
\end{proof}

\begin{lemma}[Quadratic remainder estimate]
        \label{lem:quadratic-composition}
        Let $\mathcal Q$ be smooth near the origin in $\operatorname{Sym}(n)$ and
        satisfy $\mathcal Q(0)=D\mathcal Q(0)=0$. If, for some $\lambda>0$,
        \begin{equation}\label{eq:hessian-jet-order}
                \sup_{R\ge R_1}R^\lambda
                \|D^2w\|_{C_R^{d,\alpha}(\mathfrak A_R)}<\infty,
        \end{equation}
        then
        \[
                \sup_{R\ge R_1}R^{2\lambda}
                \|\mathcal Q(D^2w)\|_{C_R^{d,\alpha}(\mathfrak A_R)}<\infty.
        \]
\end{lemma}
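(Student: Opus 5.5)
The plan is to treat this as a weighted composition estimate on each dyadic annulus, reduced to fixed scale by rescaling. Fix $R\ge R_1$ and write $X:=D^2w$ on $\mathfrak A_R$. Hypothesis \eqref{eq:hessian-jet-order} gives $\|X\|_{C_R^{d,\alpha}(\mathfrak A_R)}\le C_0R^{-\lambda}$. In particular $\|X\|_{L^\infty(\mathfrak A_R)}\le C_0R^{-\lambda}$, so after enlarging $R_1$ the range of $X$ lies in a fixed small ball $B_\eta\subset\operatorname{Sym}(n)$ on which $\mathcal Q$ is smooth. Define $Y(z):=X(Rz)$ on the rescaled annulus $\mathfrak A_1$. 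Then $\|Y\|_{C^{d,\alpha}(\mathfrak A_1)}=\|X\|_{C_R^{d,\alpha}(\mathfrak A_R)}$, because the weights $R^j$ and $R^{d+\alpha}$ exactly compensate the chain rule. The same identity holds for $\mathcal Q(Y)$. It therefore suffices to prove a fixed-domain estimate
\[
\|\mathcal Q(Y)\|_{C^{d,\alpha}(\mathfrak A_1)}\le C\|Y\|_{C^{d,\alpha}(\mathfrak A_1)}^2
\qquad\text{whenever }\|Y\|_{C^{d,\alpha}(\mathfrak A_1)}\le\eta,
\]
with $C$ depending only on $\mathcal Q$, $\eta$, $d$, $\alpha$ and the fixed geometry of $\mathfrak A_1$.

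To obtain this bound, I would use $\mathcal Q(0)=D\mathcal Q(0)=0$ and Taylor's formula with integral remainder:
\[
\mathcal Q(Y)=\mathcal B(Y)[Y,Y],\qquad \mathcal B(Y):=\int_0^1(1-t)D^2\mathcal Q(tY)\,dt.
\]
Here $\mathcal B$ is a smooth bilinear-form-valued function on $B_\eta$. Two standard facts then finish the argument. The first is that $C^{d,\alpha}(\mathfrak A_1)$ is a Banach algebra: $\|UV\|\le C\|U\|\,\|V\|$. The second is a composition (Moser-type) estimate for a smooth function of a $C^{d,\alpha}$ map with small norm: $\|\mathcal B(Y)\|_{C^{d,\alpha}}\le C\bigl(1+\|Y\|_{C^{d,\alpha}}\bigr)^{d+1}\le C'$. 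The composition estimate follows from the Faà di Bruno formula for $D^j(\mathcal B\circ Y)$, $j\le d$. Each term is a product of a derivative of $\mathcal B$, evaluated at $Y$, with derivatives $D^{i}Y$ of total order $j$. For the Hölder seminorm of the top-order terms, one uses that $D^m\mathcal B\circ Y$ is Lipschitz in $Y$ on $B_\eta$ and that $Y$ is Lipschitz when $d\ge1$; when $d=0$, Hölder continuity of $Y$ suffices. Combining the two facts gives the desired fixed-domain quadratic bound.

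Scaling back then gives
\[
\|\mathcal Q(D^2w)\|_{C_R^{d,\alpha}(\mathfrak A_R)}\le C\,C_0^2R^{-2\lambda},
\]
with constants independent of $R$, which is the claim.

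The only point needing care is the uniformity of the Hölder and Banach-algebra constants on the nonconvex annulus $\mathfrak A_1$, since mean-value arguments there need chord-arc control. I would handle this by noting that $\mathfrak A_1$ is a fixed Lipschitz domain, and hence uniformly quasiconvex. Alternatively, one can cover it by finitely many convex balls and estimate pairs of points at distance greater than a fixed $r_0$ crudely by twice the sup norm. Either way the constants are fixed once and for all, and the main obstacle reduces to this bookkeeping.
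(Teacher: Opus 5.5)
Your proposal is correct and is essentially the paper's argument: you rescale each annulus $\mathfrak A_R$ to a fixed one, where the weighted norm becomes the ordinary $C^{d,\alpha}$ norm, and then use $\mathcal Q(0)=D\mathcal Q(0)=0$ with product and H\"older product estimates to get two factors of size $R^{-\lambda}$. Your factorization $\mathcal Q(Y)=\mathcal B(Y)[Y,Y]$ just makes precise the paper's remark that every differentiated term has at least two such factors (for $D\mathcal Q(D^2w)[D^{j+2}w]$ the second factor is hidden in $D\mathcal Q(D^2w)=O(|D^2w|)$). For the geometric bookkeeping, rely on quasiconvexity of the shell rather than a covering by convex balls, since two nearby points near the inner boundary $\{\rho=1/2\}$ need not be joined by a segment inside the shell.
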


\begin{proof}
        Every term obtained by differentiating $\mathcal Q(D^2w)$ contains at least
        two factors consisting of derivatives of $D^2w$, because
        $\mathcal Q(0)=D\mathcal Q(0)=0$. The ordinary product and H\"older product
        estimates on each rescaled annulus therefore give the stated order
        $2\lambda$.
\end{proof}

\begin{proof}[Proof of Theorem~\ref{thm:asymptotic}]
        We use the decomposition fixed above. By the zeroth-order bound in \eqref{eq:section4-tail-bounds},
        the construction of Theorem~\ref{thm:main} can use the same radial profile
        $g_0=1+a$ and the remainder bound $\varepsilon(r)=Cr^{-\beta}$ on the tail,
        modified on a compact interval if necessary (and with $\varepsilon\equiv0$
        in the $A$-adapted radial case). Since $\gamma>1$ and $\beta>2$, this choice
        satisfies $(\mathrm H_A)$ and is the splitting used to obtain the solution
        $u$ in the statement.
        Write
        \[
                w:=u-P_c,
                \qquad v:=w-\Phi.
        \]
        The tail comparison gives
        \[
                \Phi-T(\rho)+C_H\Psi(\rho)-C(c)
                \le w\le
                \Phi+T(\rho)+C(c).
        \]
        Since $T$ is bounded and tends to zero, this implies
        \begin{equation}\label{eq:v-bounded-refined}
                |v|\le C(c)
        \end{equation}
        on a far tail.

        The radial identities and the first term in \eqref{eq:weighted-source-bounds} give, for
        $0\le j\le d+2$,
        \begin{equation}\label{eq:phi-weighted-derivatives}
                |D^j\Phi(x)|\le C\rho(x)^{-j}
                \begin{cases}
                        \rho(x)^{2-\gamma},       & 1<\gamma<2,         \\
                        \log\rho(x),              & \gamma=2,\ j=0,     \\
                        1,                        & \gamma=2,\ j\ge1,   \\
                        1,                        & \gamma>2,\ j=0,     \\
                        \rho(x)^{2-\gamma},       & 2<\gamma<n,\ j\ge1, \\
                        \rho(x)^{2-n}\log\rho(x), & \gamma=n,\ j\ge1,   \\
                        \rho(x)^{2-n},            & \gamma>n,\ j\ge1.
                \end{cases}
        \end{equation}
        In the last three ranges the finite limit of $\Phi$ is retained in the
        zeroth-order constant.

        Taylor expansion at $A$ gives
        \[
                F(A+H)=1+DF(A)[H]+\mathcal Q(H),
                \qquad \mathcal Q(0)=D\mathcal Q(0)=0,
        \]
        for all sufficiently far points. Since $\mathcal L\Phi=\widetilde a(\rho)$,
        \begin{equation}\label{eq:refined-v-equation}
                \mathcal Lv=\widetilde e-\mathcal Q(D^2w).
        \end{equation}

        We improve the Hessian-jet decay through a finite iterative loop.

        The boundedness of $v$, \eqref{eq:phi-weighted-derivatives}, the weighted
        source assumptions, and Lemma~\ref{lem:dyadic-derivative-upgrade} imply
        \eqref{eq:hessian-jet-order} with
        \[
                \lambda_0=\min\{\gamma,2\}
        \]
        when $\gamma\ne2$. At $\gamma=2$, the same argument gives every
        $\lambda_0<2$; the endpoint is recovered after the first update.

        Assume that \eqref{eq:hessian-jet-order} holds with exponent $\lambda$.
        Lemma~\ref{lem:quadratic-composition} gives
        \[
                \sup_{R\ge R_1}R^{2\lambda}
                \|\mathcal Q(D^2w)\|_{C_R^{d,\alpha}(\mathfrak A_R)}<\infty,
        \]
        while the remainder $\widetilde e$ has weighted order $\beta$. Thus the right-hand
        side of \eqref{eq:refined-v-equation} has effective order
        \[
                s=\min\{\beta,2\lambda\},
        \]
        with $s=2\lambda$ in the $A$-adapted radial case.

        Lemma~\ref{lem:constant-coefficient-potential} gives a constant $v_\infty$
        and the potential scale corresponding to $s$. Combining this estimate for
        $v-v_\infty$ with \eqref{eq:phi-weighted-derivatives} upgrades the full
        Hessian-jet exponent to
        \[
                \lambda^+=\min\{\gamma,n,\beta,2\lambda\},
        \]
        with $\beta$ omitted in the $A$-adapted radial case.

        At $\gamma=2$, choose the initial exponent $\lambda_0$ with
        $1<\lambda_0<2$. Then $2\lambda_0>2$, while $n\ge3$ and $\beta>2$; hence the
        first update gives $\lambda^+=2$. The explicit endpoint estimate
        $|D^2\Phi|\le Cr^{-2}$ then supplies the exact Hessian order, so no loss is
        retained at $\gamma=2$.

        At $\gamma=n$, the radial estimate gives
        \[
                |D^2\Phi(x)|\le C\rho(x)^{-n}\log\rho(x)
                \le C_\epsilon\rho(x)^{-n+\epsilon}
        \]
        for every $\epsilon>0$. Choose $0<\epsilon<n/2$. The quadratic source then
        has order $2n-2\epsilon>n$. If $\beta\le n$, the remainder term determines
        the potential scale $\mathcal P_\beta$; if $\beta>n$, both source orders lie
        strictly above $n$ and hence produce the capacity scale $r^{2-n}$. Thus the
        small Hessian-exponent loss at $\gamma=n$ does not introduce an additional
        value logarithm. The only endpoint logarithms are those already displayed
        in \eqref{eq:potential-scale} and \eqref{eq:phi-weighted-derivatives}.

        Define
        \[
                \lambda_{j+1}
                :=\min\{\gamma,n,\beta,2\lambda_j\},
        \]
        again omitting $\beta$ in the $A$-adapted radial case. Below the limiting
        threshold the exponent doubles, so the sequence reaches its stable value
        after finitely many iterations. At the stable exponent, the source in
        \eqref{eq:refined-v-equation} has effective order

        \[
                s_{\mathrm{eff}}=
                \begin{cases}
                        \min\{\beta,2\min(\gamma,n)\}, & \widetilde e\not\equiv0, \\
                        2\min(\gamma,n),               & \widetilde e\equiv0,
                \end{cases}
        \]
        up to an arbitrarily small loss when $\gamma=n$. If
        $s_{\mathrm{eff}}\ne s_*$, then both exponents are larger than $n$, and
        hence $\mathcal P_{s_{\mathrm{eff}}}=\mathcal P_{s_*}=r^{2-n}$.
        Consequently Lemma~\ref{lem:constant-coefficient-potential} gives a constant
        $v_\infty$ such that
        \[
                \sup_{R\ge R_1}
                \frac{\|v-v_\infty\|_{C_R^{d+2,\alpha}(\mathfrak A_R)}}
                {\mathcal P_{s_*}(R)}<\infty.
        \]

        It remains to compare $\Phi$ with the correction $\Phi_A$ defined in
        \eqref{eq:intro-linear-particular}. By
        \eqref{eq:radial-source-difference}, the radial source of
        $\Phi-\Phi_A$ has weighted order $2\gamma>2$. The one-dimensional potential
        estimate, equivalently Lemma~\ref{lem:constant-coefficient-potential} after the
        affine change of variables, therefore gives a constant $C_A$ such that
        \[
                \sup_{R\ge R_1}
                \frac{\|\Phi-\Phi_A-C_A\|_{C_R^{d+2,\alpha}(\mathfrak A_R)}}
                {\mathcal P_{2\gamma}(R)}<\infty.
        \]
        Set
        \[
                c_\infty:=c+v_\infty+C_A,
                \qquad
                \tau:=u-\frac12x^TAx-b\cdot x-c_\infty.
        \]
        Then
        \[
                \tau-\Phi_A=(v-v_\infty)+(\Phi-\Phi_A-C_A).
        \]
        Since $s_*=\min\{\beta,2\gamma\}$ and $\rho=|x|_A\asymp|x|$, the last
        two scaled estimates give, for every $0\le j\le N+1$,
        \[
                |D^j(\tau-\Phi_A)(x)|
                \le C|x|^{-j}\mathcal P_{s_*}(|x|)
        \]
        on a sufficiently far tail. This proves \eqref{eq:refined-derivative-decay}.
\end{proof}

\begin{proof}[Proof of Corollary~\ref{cor:polynomial-asymptotics}]
        Use throughout the pointwise splitting from Theorem~\ref{thm:asymptotic}.
        Since $\gamma>2$, the radial correction
        \[
                \Phi=[\mathcal H\widetilde a]\circ\rho
        \]
        has a finite limit; write
        \[
                L_\Phi:=\lim_{|x|\to\infty}\Phi(x).
        \]
        Run the construction with
        \[
                t:=c-L_\Phi.
        \]
        For $c$ sufficiently large, $t$ is above all required thresholds. Let $u_R$ be the unique solution of the truncated problem in
        Proposition~\ref{prop:truncated-annulus-solvability}, with $P_t$ in place of
        $P_c$, and let $u_t$ be an exterior limit supplied by
        Proposition~\ref{prop:compactness}.

        For fixed $t$, the truncated Dirichlet problem is independent of the gluing
        gap $\delta_*$. By uniqueness, its solution $u_R$ may therefore be compared
        with $\underline v_{\delta_*}=\underline w-\delta_*\chi_*$ for every
        sufficiently small $\delta_*$. Passing first to the exterior limit and then
        letting $\delta_*\downarrow0$ gives $u_t\ge\underline w$. The upper
        comparison constructed in Lemma~\ref{lem:uniform-value-collar} gives
        $u_t\le P_t+\Phi^+$. Since
        $\underline w-(P_t+\Phi^-)\to0$, $\Phi^\pm=\Phi\pm T$, and $T\to0$,
        the two inequalities imply
        \[
                u_t-P_t-\Phi\longrightarrow0.
        \]
        Because $t+L_\Phi=c$,
        \[
                u_t(x)-\frac12x^TAx-b\cdot x-c\longrightarrow0.
        \]
        Set $u_c:=u_t$.

        In the notation of the preceding proof, $v_\infty=0$. Since
        $\Phi_A\to0$ under the present normalization and
        $\Phi-\Phi_A-C_A\to0$, we have $C_A=L_\Phi$. Hence
        $c_\infty=t+C_A=c$. The same estimates give,
        with $\mu=\min\{\gamma,\beta\}$,
        \[
                |D^j\tau_c(x)|\le C|x|^{-j}\mathcal P_\mu(|x|),
                \qquad 0\le j\le N+1,
        \]
        where $\tau_c=u_c-\frac12x^TAx-b\cdot x-c$. Since $\Phi_A$ has order
        $\mathcal P_\gamma$ while $\tau_c-\Phi_A$ has order
        $\mathcal P_{s_*}$, one has
        $\mu=\min\{\gamma,s_*\}=\min\{\gamma,\beta\}$. This is
        \eqref{eq:power-tail-consequences}.

        Finally, choose $t_0$ so that the construction works for every $t>t_0$ and
        set $c_*:=t_0+L_\Phi$. Then every prescribed $c>c_*$ is covered. For uniqueness,
        let $u_1,u_2$ have the same boundary value and the same prescribed quadratic
        polynomial, and set
        \[
                \eta_R:=\sup_{\{\rho=R\}}|u_1-u_2|\longrightarrow0.
        \]
        Since adding a constant does not change the equation, comparison on
        $E\cap\{\rho<R\}$ gives $u_1\le u_2+\eta_R$. Letting $R\to\infty$
        yields $u_1\le u_2$, and reversing the roles of the two solutions gives
        $u_1=u_2$.
\end{proof}

\appendix

\section{Angular instability of the linear asymptotics}
\label{app:angular-instability}

We give a Hessian quotient example showing why the radial size and the angular
oscillation of the source enter $(\mathrm H_A)$ differently. Set
\[
        a:=\left(\frac{\binom nl}{\binom nk}\right)^{1/(k-l)},
        \qquad A:=aI,
\]
so that $S_{k,l}(A)=1$. A direct differentiation gives
\begin{equation}\label{eq:isotropic-quotient-linearization}
        DS_{k,l}(A)[H]=\frac{k-l}{na}\operatorname{tr}H.
\end{equation}
For $r=|x|>e^2$, define
\[
        P_A(x):=\frac a2|x|^2,
\]
and
\begin{equation}\label{eq:radial-angular-model-solutions}
        u_{\mathrm{rad}}(x)
        :=P_A(x)+\frac{na}{(k-l)(n-1)}\frac r{\log r},
        \qquad
        u_{\mathrm{ang}}(x)
        :=P_A(x)+\frac{a}{k-l}x_1\log\log r.
\end{equation}
Let
\[
        g_{\mathrm{rad}}:=S_{k,l}(D^2u_{\mathrm{rad}}),
        \qquad
        g_{\mathrm{ang}}:=S_{k,l}(D^2u_{\mathrm{ang}}).
\]

\begin{proposition}[Radial tail versus first angular mode]
        \label{prop:angular-instability}
        For all sufficiently large $r$, both functions in
        \eqref{eq:radial-angular-model-solutions} are $k$-admissible and
        \[
                D^2u_{\mathrm{rad}}\longrightarrow A,
                \qquad
                D^2u_{\mathrm{ang}}\longrightarrow A.
        \]
        Their sources satisfy
        \begin{align}
                g_{\mathrm{rad}}(x)
                 & =1+\frac1{r\log r}
                +O\!\left(\frac1{r(\log r)^2}\right),
                \label{eq:radial-model-source} \\
                g_{\mathrm{ang}}(x)
                 & =1+\frac{x_1}{r^2\log r}
                +O\!\left(\frac1{r(\log r)^2}\right).
                \label{eq:angular-model-source}
        \end{align}
        Hence both obey the same pointwise estimate
        \[
                |g(x)-1|=O((r\log r)^{-1}).
        \]
        The radial source satisfies $(\mathrm H_A)$ after a smooth modification on a
        compact set, and
        \[
                u_{\mathrm{rad}}-P_A=o(r),
                \qquad
                Du_{\mathrm{rad}}-Ax\longrightarrow0.
        \]
        In contrast, the angular source cannot satisfy $(\mathrm H_A)$, while
        \[
                u_{\mathrm{ang}}-P_A\ne o(r),
                \qquad
                Du_{\mathrm{ang}}-Ax\ \text{does not converge}.
        \]
        Thus the second condition in $(\mathrm H_A)$ is an angular stability condition
        for the prescribed linear asymptotics, rather than a consequence of the size of
        $g-1$ alone.
\end{proposition}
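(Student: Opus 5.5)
The plan is to compute both Hessians explicitly as $A$ plus a small perturbation and expand $S_{k,l}$ to second order at $A$ using \eqref{eq:isotropic-quotient-linearization}. For a radial function $h(r)$ one has $D^2h=h''\,\hat x\otimes\hat x+\frac{h'}{r}(I-\hat x\otimes\hat x)$ with $\hat x=x/r$. For $h(r)=r/\log r$ we get $h'=\frac1{\log r}-\frac1{(\log r)^2}$ and $h''=O(\frac1{r(\log r)^2})$. Hence
\[
\operatorname{tr}D^2h=h''+\frac{n-1}{r}h'=\frac{n-1}{r\log r}+O\Bigl(\frac1{r(\log r)^2}\Bigr),
\]
and $|D^2h|=O((r\log r)^{-1})$.

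For $x_1\ell(r)$ with $\ell=\log\log r$, we have
\[
D^2(x_1\ell)=e_1\otimes D\ell+D\ell\otimes e_1+x_1D^2\ell,
\qquad D\ell=\frac{\hat x}{r\log r}.
\]
Moreover $x_1D^2\ell=O\bigl((r\log r)^{-1}\bigr)$, with trace $x_1(\ell''+\frac{n-1}{r}\ell')=x_1\frac{n-2}{r^2\log r}+O(\frac{1}{r(\log r)^2})$. So the full trace is
\[
\frac{2x_1}{r^2\log r}+\frac{(n-2)x_1}{r^2\log r}=\frac{n\,x_1}{r^2\log r}+\dots,
\]
and the perturbation is again $O((r\log r)^{-1})$. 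In both cases $D^2u\to A$, so admissibility holds for large $r$ because $\lambda(A)\in\Gamma_k$ is an interior point. Taylor's formula $S_{k,l}(A+H)=1+\frac{k-l}{na}\operatorname{tr}H+O(|H|^2)$ with $|H|^2=O((r\log r)^{-2})$, which is absorbed in the error, then gives \eqref{eq:radial-model-source} and \eqref{eq:angular-model-source} once the coefficients $\frac{na}{(k-l)(n-1)}$ and $\frac{a}{k-l}$ are inserted. The common pointwise bound follows since $|x_1|\le r$.

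For the radial source, note that $G_A$ is a multiple of $I$, so $|x|_A$ is a fixed multiple of $r$. Take $g_0=g_{\mathrm{rad}}$ as a function of $|x|_A$ (it is exactly radial) and $\varepsilon\equiv0$. Then $m_g\asymp(r\log r)^{-1}$, and $\mathcal A m_g(r)\le r^{-n}\int^r s^{n-2}(\log s)^{-1}ds=O((r\log r)^{-1})$, so $\widehat m_g=O((r\log r)^{-1})$. Hence $\int r\widehat m_g^2\,dr\lesssim\int\frac{dr}{r(\log r)^2}<\infty$. After modifying $g$ on a compact set to make it globally smooth and positive, $(\mathrm H_A)$ holds. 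The asymptotics $r/\log r=o(r)$ and $D(r/\log r)=h'\hat x\to0$ are immediate.

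The main obstacle is the negative claim: the angular source satisfies no admissible decomposition. Suppose $|g_{\mathrm{ang}}-g_0(|x|_A)|\le\varepsilon$ with $\int r\varepsilon<\infty$. Compare $x$ and $-x$: since $g_0(|x|_A)$ is even in $x$,
\[
2\varepsilon(|x|_A)\ge|g_{\mathrm{ang}}(x)-g_{\mathrm{ang}}(-x)|.
\]
I would check that the error in \eqref{eq:angular-model-source} has an even part of size $O(\frac1{r(\log r)^2})$ and an odd part of smaller order, e.g.\ $O(\frac{1}{r(\log r)^{2}})$ times $x_1/r$, by redoing the expansion keeping parity. The key structural fact is that the quadratic term in $H$ is even in $x$, while the linear term is exactly odd. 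Then at $x=re_1$ the odd difference is $\ge\frac{2}{r\log r}(1-o(1))$, so $\varepsilon(s)\gtrsim(s\log s)^{-1}$. This forces $\int r\varepsilon=\infty$, a contradiction.

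Finally, $u_{\mathrm{ang}}-P_A=\frac{a}{k-l}x_1\log\log r$ is not $o(r)$ along $x=re_1$. Also $Du_{\mathrm{ang}}-Ax=\frac{a}{k-l}\bigl(\log\log r\,e_1+x_1D\ell\bigr)$, whose first component tends to infinity, so it does not converge.
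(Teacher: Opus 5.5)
Your proof is correct and follows essentially the same route as the paper: explicit Hessians, the isotropic linearization \eqref{eq:isotropic-quotient-linearization} with an $O(|H|^2)$ Taylor remainder, $\varepsilon\equiv0$ for the radial source, and a lower bound $\varepsilon(s)\gtrsim(s\log s)^{-1}$ obtained from the oscillation of $g_{\mathrm{ang}}$ on large spheres. Your antipodal comparison is a special case of that oscillation argument, with the small advantage that it uses only $|{-x}|_A=|x|_A$. The parity refinement you propose is unnecessary: the error term in \eqref{eq:angular-model-source} is already $O\bigl(\frac{1}{r(\log r)^2}\bigr)=o\bigl(\frac{1}{r\log r}\bigr)$, so $g_{\mathrm{ang}}(re_1)-g_{\mathrm{ang}}(-re_1)=\frac{2}{r\log r}(1+o(1))$ follows directly, which is how the paper concludes.
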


\begin{proof}
        Since $D^2(u_{\mathrm{rad}}-P_A)$ and
        $D^2(u_{\mathrm{ang}}-P_A)$ are both
        $O((r\log r)^{-1})$, the two Hessians stay in a fixed compact subset of
        $\Gamma_k$ for all sufficiently large $r$ and converge to $A$.
        Taylor expansion at $A$, using
        \eqref{eq:isotropic-quotient-linearization}, gives
        \[
                S_{k,l}(A+H)
                =1+\frac{k-l}{na}\operatorname{tr}H+O(|H|^2).
        \]
        The elementary identities
        \[
                \Delta\!\left(\frac r{\log r}\right)
                =\frac{n-1}{r\log r}
                +O\!\left(\frac1{r(\log r)^2}\right)
        \]
        and
        \[
                \Delta(x_1\log\log r)
                =\frac{n x_1}{r^2\log r}
                -\frac{x_1}{r^2(\log r)^2}
        \]
        yield \eqref{eq:radial-model-source}--\eqref{eq:angular-model-source}.

        For the radial example, \eqref{eq:isotropic-quotient-linearization} gives
        $G_A=(na)^{-1}I$, hence $\rho=|x|_A=\sqrt{na}\,r$. Define the radial profile by
        $g_0(\sqrt{na}\,r):=g_{\mathrm{rad}}(x)$ for $|x|=r$ and take
        $\varepsilon\equiv0$. After this fixed rescaling,
        \[
                \widehat m_g(\rho)\le \frac{C}{\rho\log \rho}
        \]
        for large $\rho$, and therefore
        \[
                \int^\infty \rho\,\widehat m_g(\rho)^2\,d\rho<\infty.
        \]
        The stated asymptotics follow directly from
        \eqref{eq:radial-angular-model-solutions}.

        For the angular example, the oscillation of $g_{\mathrm{ang}}$ on every large
        sphere satisfies, by \eqref{eq:angular-model-source},
        \[
                \operatorname{osc}_{|x|=r}g_{\mathrm{ang}}
                \ge \frac{c}{r\log r}.
        \]
        Since the level sets of $|x|_A$ are the same spheres up to a fixed rescaling,
        any radial function $g_0(|x|_A)$ satisfying
        \eqref{eq:g-tail-decomposition} must therefore obey
        \[
                \varepsilon(r)\ge \frac{c}{r\log r}
        \]
        after adjusting the constant and the radial variable by this fixed rescaling.
        Hence
        \[
                \int^\infty r\varepsilon(r)\,dr=\infty,
        \]
        so $(\mathrm H_A)$ fails. Finally, along $x=re_1$,
        \[
                \frac{u_{\mathrm{ang}}(re_1)-P_A(re_1)}r
                =\frac{a}{k-l}\log\log r\longrightarrow\infty,
        \]
        while
        \[
                Du_{\mathrm{ang}}(x)-Ax
                =\frac{a}{k-l}e_1\log\log r
                +O((\log r)^{-1}),
        \]
        which does not converge.
\end{proof}

\section{Decay of radial potentials for regularly varying tails}
\label{app:regular-variation}

We record the asymptotics of $\mathcal A a$ and $\mathcal H a$ for regularly varying tails and the thresholds relevant to the integral conditions used above.

Let $a$ be eventually one sign and suppose
\[
        a(r)\sim c_a r^{-\beta}L(r),
        \qquad c_a\ne0,
        \qquad \beta>0,
\]
where $L$ is positive and slowly varying. Write
\[
        \mathcal A a(r):=r^{-n}\int_{R_0}^r s^{n-1}a(s)\,ds,
        \qquad
        \mathcal H a(r):=\int_{R_0}^r t\,\mathcal A a(t)\,dt,
\]
and, when needed,
\[
        \Lambda_j(r):=\int_{R_0}^r\frac{L(t)}t\,dt
        \quad(j=2,n),
        \qquad
        M_\infty:=\int_{R_0}^\infty s^{n-1}a(s)\,ds,
\]
and $(\mathcal H a)_\infty:=\lim_{r\to\infty}\mathcal H a(r)$ when the limit is finite.

\begin{proposition}[Regularly varying radial tails]
        \label{prop:regularly-varying-radial}
        Up to an additive constant in $\mathcal H a$, the asymptotic regimes
        are
        \[
                \begin{array}{c|c|c}
                        \text{range} & \mathcal A a(r) & \mathcal H a(r) \\ \hline
                        0<\beta<2
                                     &
                        \dfrac{c_a}{n-\beta}r^{-\beta}L(r)
                                     &
                        \mathcal H a(r)\sim
                        \dfrac{c_a}{(n-\beta)(2-\beta)}r^{2-\beta}L(r)
                        \\[1.2em]
                        \beta=2
                                     &
                        \dfrac{c_a}{n-2}r^{-2}L(r)
                                     &
                        (\mathcal H a)'(r)\sim\dfrac{c_a}{n-2}\dfrac{L(r)}r
                        \\[1.2em]
                        2<\beta<n
                                     &
                        \dfrac{c_a}{n-\beta}r^{-\beta}L(r)
                                     &
                        (\mathcal H a)_\infty-\mathcal H a(r)\sim
                        \dfrac{c_a}{(n-\beta)(\beta-2)}r^{2-\beta}L(r)
                        \\[1.2em]
                        \beta=n,\ \Lambda_n(r)\to\infty
                                     &
                        c_ar^{-n}\Lambda_n(r)
                                     &
                        (\mathcal H a)_\infty-\mathcal H a(r)\sim
                        \dfrac{c_a}{n-2}r^{2-n}\Lambda_n(r)
                        \\[1.2em]
                        \beta\ge n,\ M_\infty\ne0
                                     &
                        M_\infty r^{-n}
                                     &
                        (\mathcal H a)_\infty-\mathcal H a(r)\sim
                        \dfrac{M_\infty}{n-2}r^{2-n}.
                \end{array}
        \]
        At the critical exponent $\beta=2$, if $\Lambda_2(r)\to\infty$, then
        \[
                \mathcal H a(r)\sim\frac{c_a}{n-2}\Lambda_2(r),
                \qquad
                \mathcal H a(\lambda r)-\mathcal H a(r)
                \sim\frac{c_a}{n-2}L(r)\log\lambda
        \]
        for every fixed $\lambda>0$. If $\Lambda_2$ converges, then $\mathcal H a$
        has a finite limit and
        \[
                (\mathcal H a)_\infty-\mathcal H a(r)
                \sim\frac{c_a}{n-2}\int_r^\infty\frac{L(t)}t\,dt
        \]
        whenever the tail integral is nonzero. At $\beta=n$, if $\Lambda_n$ remains
        bounded, the finite-moment row applies when $M_\infty\ne0$; a vanishing
        accumulated moment requires a finer expansion.
\end{proposition}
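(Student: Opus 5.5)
The plan is to read off all regimes from Karamata's theorem for regularly varying functions, applied twice: once to $s^{n-1}a(s)$ to get $\mathcal A a$, and once to $t\,\mathcal A a(t)$ to get $\mathcal H a$. Since $a$ is eventually of one sign and $a\sim c_a r^{-\beta}L$, the integrand $s^{n-1}a(s)$ is regularly varying of index $n-1-\beta$.

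\emph{Step 1: the column $\mathcal A a$.} I would split into three cases.
\begin{itemize}
\item If $\beta<n$, the index exceeds $-1$. Karamata's direct half then gives $\int_{R_0}^r s^{n-1}a\,ds\sim c_a r^{n-\beta}L(r)/(n-\beta)$, which is the first three rows.
\item If $\beta=n$, the integrand is $\sim c_aL(s)/s$, so the integral is $\sim c_a\Lambda_n(r)$ when $\Lambda_n\to\infty$. Here one-signedness is what legitimises passing from $a\sim$ to the integral.
\item If $\beta\ge n$ with a convergent moment, the integral tends to $M_\infty$, and $M_\infty\neq0$ gives $\mathcal A a\sim M_\infty r^{-n}$.
\end{itemize}
Throughout, the lower limit $R_0$ only contributes a bounded term. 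That term is negligible whenever the main term diverges, and it is absorbed into $M_\infty$ otherwise.

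\emph{Step 2: the column $\mathcal H a$.} Now $t\,\mathcal A a(t)$ is regularly varying of index $1-\beta$ when $\beta<n$, and of index $1-n$ (times $\Lambda_n$ or $M_\infty$) otherwise. I would again split into cases.
\begin{itemize}
\item For $\beta<2$ the index exceeds $-1$, so Karamata gives the stated $r^{2-\beta}L$ growth.
\item For $\beta=2$ the first-column asymptotic is exactly the stated derivative $(\mathcal H a)'=r\,\mathcal A a$. The refinements are then two facts. First, $\mathcal H a\sim\frac{c_a}{n-2}\Lambda_2$ by integrating an eventually one-signed asymptotic. Second, $\mathcal H a(\lambda r)-\mathcal H a(r)=\int_r^{\lambda r}\frac{c_a}{n-2}\frac{L(t)}t(1+o(1))\,dt\sim\frac{c_a}{n-2}L(r)\log\lambda$, using uniform convergence $L(\mu r)/L(r)\to1$ for $\mu$ in compact subsets of $(0,\infty)$.
\item For $\beta>2$ the index is below $-1$, so $\mathcal H a$ converges. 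The tail $\int_r^\infty t\,\mathcal A a$ is then handled by Karamata's tail half: $\int_r^\infty t^{-\sigma}\ell\sim r^{1-\sigma}\ell(r)/(\sigma-1)$. This gives $\frac{c_a}{(n-\beta)(\beta-2)}r^{2-\beta}L$ for $2<\beta<n$.
\item For the moment row, the tail is $\frac{M_\infty}{n-2}r^{2-n}$.
\item For $\beta=n$ with $\Lambda_n\to\infty$, the function $\Lambda_n$ is itself slowly varying, since $\Lambda_n(\lambda r)-\Lambda_n(r)=O(L(r))=o(\Lambda_n(r))$. The tail half then gives $\frac{c_a}{n-2}r^{2-n}\Lambda_n$.
\item When $\Lambda_2$ converges, the tail is $\int_r^\infty\frac{c_a}{n-2}\frac{L}{t}(1+o(1))\,dt$. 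By one-signedness, the $o(1)$ factor passes through the integral, and that is nonzero by assumption.
\end{itemize}
The "up to an additive constant" clause covers the $R_0$ contributions in the divergent rows.

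\emph{Main obstacle.} The delicate point is proving that $\Lambda_n$ (respectively $\int_r^\infty L/t$) is slowly varying and dominates $L$, so that Karamata applies in the critical rows. I would prove the first fact directly from the uniform convergence theorem: the ratio $\Lambda(\lambda r)/\Lambda(r)\to1$ follows because the increment over $[r,\lambda r]$ is $O(L(r)\log\lambda)$, while $L(r)/\Lambda(r)\to0$. The latter limit holds because $\Lambda(r)\ge\int_{r/K}^{r}L/t\approx L(r)\log K$ for every $K$. The finite-$\Lambda_n$ case with $M_\infty=0$ is explicitly excluded, so no further work is needed there.
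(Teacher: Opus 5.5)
Your proposal is correct and follows essentially the same route as the paper, which also applies Karamata's direct and tail integral theorems twice (first to $s^{n-1}a(s)$ to get $\mathcal A a$, then to $(\mathcal H a)'=r\,\mathcal A a$), with the critical rows $\beta=2$ and $\beta=n$ handled through the slowly varying primitive $\Lambda$ and the finite moment $M_\infty$. The only minor difference is the proportional increment at $\beta=2$: you integrate $(\mathcal H a)'\sim\frac{c_a}{n-2}L(t)/t$ directly using the uniform convergence theorem for $L$, whereas the paper cites the local-uniformity theorem for de Haan's $\Pi$-class. Your direct argument is self-contained and suffices.
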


\begin{proof}
        The table is obtained by applying Karamata theory twice: first to the weighted
        primitive defining $\mathcal A a$, and then to
        $(\mathcal H a)'=r\mathcal A a$. For $0<\beta<n$, the direct integral
        theorem gives the stated formula for $\mathcal A a$. Applying the direct or
        tail version to its product with $r$ yields the growing regime
        $\beta<2$ and the decaying regime $2<\beta<n$; see
        \cite[Theorem~1.5.11 and Propositions~1.5.9a--b]{BinghamGoldieTeugels}.

        The exponent $\beta=2$ is the logarithmic transition, because
        $(\mathcal H a)'$ is then proportional to $L(r)/r$. The proportional
        increment follows from the local-uniformity theorem for the de Haan
        $\Pi$-class \cite[Theorem~3.1.16]{BinghamGoldieTeugels}. At $\beta=n$, the
        weighted primitive accumulates the logarithmic factor $\Lambda_n$ when it
        diverges. If that primitive converges--and likewise when $\beta>n$--one
        instead obtains a finite moment $M_\infty$; division by $r^n$ followed by one
        integration gives the universal capacity tail $r^{2-n}$. These alternatives
        are exactly the rows displayed in the proposition.
\end{proof}

\begin{remark}[Power-logarithmic tails]
        To illustrate the transitions in Proposition~\ref{prop:regularly-varying-radial}, consider the concrete tail $a(r)=r^{-\beta}(\log r)^p$ for large $r$. Here $L(r)=(\log r)^p$ is slowly varying. The function $\mathcal H a$ has the following asymptotic behaviors:
        \begin{itemize}
                \item When $0<\beta<2$, $\mathcal H a(r)\asymp r^{2-\beta}(\log r)^p$ diverges to infinity.
                \item When $\beta=2$ and $p>-1$, the integral $\Lambda_2$ diverges and $\mathcal H a(r)\asymp (\log r)^{p+1}$ grows logarithmically.
                \item When $2<\beta<n$, $\mathcal H a(r)$ converges to a finite limit $(\mathcal H a)_\infty$, and the remainder decays as $(\mathcal H a)_\infty-\mathcal H a(r)\asymp r^{2-\beta}(\log r)^p$.
                \item When $\beta=n$ and $p>-1$, the remainder decay rate shifts to $r^{2-n}(\log r)^{p+1}$ due to the accumulated mass in $\mathcal A a$.
                \item When $\beta>n$ and the total moment $M_\infty\ne0$, the remainder has the Newtonian capacity rate $r^{2-n}$, regardless of the exponent $p$. If $M_\infty=0$, this leading capacity term cancels and faster decay may occur.
        \end{itemize}
\end{remark}

\begin{remark}[Admissible radial tails]
        If $\widehat m(r)\asymp r^{-\beta}L(r)$ with $L$ slowly varying, then
        \[
                \int^\infty r\widehat m(r)^2\,dr<\infty
        \]
        holds for $\beta>1$, fails for $\beta<1$, and, when $\beta=1$, is equivalent
        to
        \[
                \int^\infty\frac{L(r)^2}{r}\,dr<\infty.
        \]
        If $f_0(r)-1\sim c_fr^{-\beta}L(r)$ and is eventually of one sign, the
        correction $\Phi=\mathcal H(f_0-1)\circ\rho$ follows directly from
        Proposition~\ref{prop:regularly-varying-radial}. If, separately,
        $m(r)\asymp r^{-\beta}L(r)$, then $H_m$ is bounded for
        $\beta>2$, unbounded for $\beta<2$, and at $\beta=2$ is bounded exactly
        when
        \[
                \int^\infty\frac{L(r)}r\,dr<\infty.
        \]
\end{remark}

\section{Solvability on bounded annuli}\label{app:annular-solvability}

This appendix records the bounded-annulus Dirichlet theorem used in the exhaustion, with independent data on the two boundary components.

\begin{proposition}[Dirichlet problem on a bounded annulus]
        Let $q\ge4$ and $0<\alpha<1$. Let
        $\Omega_-\Subset\Omega_+\subset\mathbb R^n$ be bounded
        $C^{q,\alpha}$ domains such that
        \[
                D:=\Omega_+\setminus\overline{\Omega_-}
        \]
        is connected. Write
        \[
                \Gamma_-:=\partial\Omega_-,
                \qquad
                \Gamma_+:=\partial\Omega_+.
        \]
        Let $h\in C^{q-2,\alpha}(\overline D)$ be positive, and prescribe
        independent boundary values
        \[
                \varphi_-\in C^{q,\alpha}(\Gamma_-),
                \qquad
                \varphi_+\in C^{q,\alpha}(\Gamma_+).
        \]
        Suppose there is a full-boundary admissible subsolution
        $\underline u\in C^2(\overline D)$ satisfying
        \[
                \begin{cases}
                        F(D^2\underline u)\ge h & \text{in }D,        \\
                        \underline u=\varphi_-  & \text{on }\Gamma_-, \\
                        \underline u=\varphi_+  & \text{on }\Gamma_+.
                \end{cases}
        \]
        Then the annular Dirichlet problem
        \[
                \begin{cases}
                        F(D^2u)=h   & \text{in }D,        \\
                        u=\varphi_- & \text{on }\Gamma_-, \\
                        u=\varphi_+ & \text{on }\Gamma_+
                \end{cases}
        \]
        has a unique classical $k$-admissible solution
        $u\in C^{q,\alpha_D}(\overline D)$ for some
        $0<\alpha_D\le\alpha$. Smooth data give a smooth solution.
\end{proposition}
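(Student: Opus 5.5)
The plan is to run the standard continuity method for the Dirichlet problem on a bounded domain, following Caffarelli--Nirenberg--Spruck and Guan's subsolution approach \cite{Guan2023Dirichlet}. The subsolution $\underline u$ replaces any geometric convexity assumption on the two boundary components. Uniqueness is immediate from the comparison principle for admissible solutions of $F(D^2u)=h$, since the operator is elliptic on $\Gamma_k$; see \cite[Appendix~A]{JiangLiLi2022ExteriorQuotient}.

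For existence I would connect the subsolution to the target through the family
\[
F(D^2u^t)=h^t:=(1-t)F(D^2\underline u)+th,\qquad u^t=\varphi_\pm\ \text{on }\Gamma_\pm,\qquad t\in[0,1].
\]
At $t=0$ the solution is $u^0=\underline u$. Since $F(D^2\underline u)\ge h$, we have $h^t\ge h$ for every $t$, so $\underline u$ remains a subsolution along the whole path. One issue is that $\underline u$ is only $C^2$. I would therefore first replace $F(D^2\underline u)$ by a smooth positive function squeezed between $h$ and $F(D^2\underline u)$, so that the right-hand side is $C^{q-2,\alpha}$.

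Openness of the set of admissible $t$ follows from the implicit function theorem in $C^{2,\alpha}$. The linearized operator $F^{pq}(D^2u^t)\partial_{pq}$ is uniformly elliptic with no zeroth-order term, so it is invertible with Dirichlet data. Closedness requires a priori $C^{2,\alpha}$ bounds uniform in $t$, which I would build up in stages.

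\emph{$C^0$.} The bounds $\underline u\le u^t\le \mathfrak h$ hold, where $\mathfrak h$ is the harmonic function with data $\varphi_\pm$; here we use $\Delta u^t>0$.

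\emph{$C^1$ on the boundary and globally.} On the boundary, the gradient is controlled by these same-trace barriers. It then propagates to all of $D$ by the fixed-domain gradient estimate of \cite[Section~2]{Guan2023Dirichlet}.

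\emph{$C^2$ on the boundary.} This is done component by component on $\Gamma_-$ and $\Gamma_+$ separately, as in Lemma~\ref{lem:fixed-boundary-hessian}. The tangential entries come from the trace, the mixed entries from the barrier $u-\underline u+td-Nd^2$, and the normal--normal entry from the strict-subsolution nondegeneracy \cite[Section~4]{Guan2023Dirichlet}. Strictness can be arranged by perturbing $\underline u$ with a small multiple of a function vanishing on $\partial D$; the concavity argument is the same one used there.

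\emph{Global $C^2$ and $C^{2,\alpha}$.} The global $C^2$ bound follows from the boundary-to-interior estimate \cite[(2.2)]{ITW2005}. Lemma~\ref{lem:cone-margin} then gives uniform ellipticity. The $C^{2,\alpha_D}$ bound up to the boundary comes from Krylov's boundary Evans--Krylov estimate, or from \cite{SilvestreSirakov2014Boundary} after the concave extension of Lemma~\ref{lem:bellman-extension}.

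Higher regularity $C^{q,\alpha_D}$ then follows by Schauder bootstrapping on the differentiated equation, exactly as in Proposition~\ref{prop:post-limit-regularity}.

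I expect the main obstacle to be the boundary double-normal estimate on each component when the two data sets are independent. The key point is that Guan's argument is local near each boundary point and needs only a same-trace subsolution there. The hypothesis supplies exactly such a subsolution on both $\Gamma_-$ and $\Gamma_+$ simultaneously, so no convexity of $\Gamma_\pm$ is required. I would verify this locality carefully, in particular that the barrier constants depend only on the $C^2$ norm of $\underline u$ and the strictness margin.
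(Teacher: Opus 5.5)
The pieces of your estimate chain are sound: $C^0$, boundary and global $C^1$, boundary $C^2$ on $\Gamma_-$ and $\Gamma_+$ separately, the boundary-to-interior bound, Evans--Krylov, and Schauder bootstrapping. This is what sits inside the bounded-domain subsolution theorem that the paper simply invokes after checking the structural conditions on $F$ (cone, concavity, homogeneity, $F\to0$ at $\partial\Gamma_k$, $\inf h>0$).

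\textbf{The gap is at the start of your continuity path.} You replace $F(D^2\underline u)$ by a smooth $\tilde h$ with $h\le\tilde h\le F(D^2\underline u)$. After that, $\underline u$ is only a \emph{subsolution} of the $t=0$ problem $F(D^2u)=\tilde h$, $u=\varphi_\pm$ on $\Gamma_\pm$, not a solution. Nothing then shows that $t=0$ lies in the solvable set. Indeed $\tilde h=h$ is an allowed choice, and then the $t=0$ problem is the original problem.

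Without the replacement the path fails too. $h^0=F(D^2\underline u)$ is only continuous and $\underline u\notin C^{2,\alpha}$ in general, so the implicit function theorem cannot be applied at $t=0$. Moreover, your mixed-derivative and global $C^2$ estimates differentiate the equation and so require $h^t\in C^2$. A smaller point: $\underline u$ stays a subsolution along the path because $h^t\le F(D^2\underline u)$, not because $h^t\ge h$.

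\textbf{The fix is to regularize the subsolution, not the right-hand side.} Your strictness remark is the right tool; here is one way to realize it.
\begin{itemize}
\item[(i)] Build $w\in C^2(\overline D)$ with $w=0$ on $\partial D$ and $\operatorname{tr}(a\,D^2w)\ge c>0$ for every $a$ with fixed ellipticity bounds. Take a regularized maximum, as in \eqref{eq:regularized-max-hessian}, of the collar barrier $-d+Nd^2$ on $\{d\le 1/(2N)\}$ ($N$ large) and a convex quadratic with small Hessian whose values on $\overline D$ lie in $(-\frac1{8N},-\frac1{16N})$. The rank-one term in the regularized maximum has the right sign, and the barrier dominates near $\partial D$.
\item[(ii)] Concavity then gives $F(D^2(\underline u+\epsilon w))\ge h+c\epsilon$ for small $\epsilon$.
\item[(iii)] With this margin, mollify. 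Then restore the exact traces by adding a $C^2$-small $C^{q,\alpha}$ extension of the trace error (flatten $\partial D$ and extend constantly in the normal variable). The margin absorbs the $C^2$ error, giving an admissible $\underline u'\in C^{q,\alpha}(\overline D)$ with $F(D^2\underline u')\ge h$ and traces $\varphi_\pm$.
\item[(iv)] Run $F(D^2u^t)=(1-t)F(D^2\underline u')+th$ starting from $u^0=\underline u'$, which remains a subsolution since $h^t\le F(D^2\underline u')$.
\end{itemize}

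With this repair your argument is a self-contained version of what the paper obtains by citing Guan's theorem, whose hypothesis is only a $C^2$ subsolution. The paper's route is shorter. Yours makes explicit that the two boundary components are handled independently.
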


\begin{proof}
        Define a single boundary function $\varphi$ on the disconnected boundary
        $\partial D=\Gamma_-\cup\Gamma_+$ by
        \[
                \varphi=\varphi_-\quad\text{on }\Gamma_-,
                \qquad
                \varphi=\varphi_+\quad\text{on }\Gamma_+.
        \]
        Since the two components are disjoint,
        $\varphi\in C^{q,\alpha}(\partial D)$. Thus this is the usual bounded-domain
        Dirichlet problem with separately prescribed data on its two boundary
        components.

        It remains to verify that the bounded-domain subsolution theorem applies to
        the operator $F$. The cone $\Gamma_k\subset\mathbb R^n$ is open,
        symmetric, and convex, while the quotient root is positive, elliptic,
        concave, and homogeneous of degree one on $\Gamma_k$. Its boundary value is
        zero in the limiting sense. For $l=0$, this follows from $\sigma_k=0$ on the
        finite cone boundary. When $l\ge1$, let $\lambda^{(j)}\in\Gamma_k$ tend to
        $\partial\Gamma_k$. If $\sigma_l(\lambda^{(j)})$ stays positive, then
        $\sigma_k/\sigma_l\to0$. If $\sigma_l(\lambda^{(j)})\to0$, the Maclaurin
        inequality
        \[
                \sigma_k(\lambda^{(j)})
                \le C\sigma_l(\lambda^{(j)})^{k/l}
        \]
        gives the same conclusion. Since $h$ is positive on the compact annulus, the
        required nondegeneracy condition follows. Degree-one homogeneity supplies
        the required large-ray condition.

        The Euclidean bounded-domain subsolution theorem in
        \cite{Guan2023Dirichlet} therefore applies on $D$. The full-boundary
        subsolution supplies the boundary estimates on both $\Gamma_-$ and
        $\Gamma_+$; the continuity method gives existence, and comparison gives
        uniqueness. The finite-regularity statement follows by standard smooth approximation, using
        the global $C^2$ estimate, Evans--Krylov, and boundary Schauder estimates uniformly.
\end{proof}

For Proposition~\ref{prop:truncated-annulus-solvability}, take
\[
        \Omega_-=\Omega,
        \qquad
        \Omega_+=\{\rho<R\},
        \qquad
        D=D_R,
\]
\[
        h=f_R,
        \qquad
        \varphi_-=\phi,
        \qquad
        \varphi_+=\overline u_R^\infty\big|_{\{\rho=R\}}.
\]
The function $\underline u_R$ constructed in Section~\ref{sec:construction}
is the required full-boundary admissible subsolution. Hence the
preceding proposition gives the solution of \eqref{eq:annular-problem} with
the prescribed inner and outer Dirichlet values.

\end{document}